\documentclass[11pt]{article}

\usepackage[utf8]{inputenc}
\usepackage{amsmath, mathtools}
\usepackage{amsfonts, bbm, mathrsfs, yfonts}
\usepackage{amssymb, amsthm, upgreek, bbm}
\usepackage{enumitem, hyperref,xcolor}
\usepackage{subcaption}
\usepackage[sort]{natbib}
\usepackage{geometry}
\usepackage{eufrak}
\usepackage{tikz}

\allowdisplaybreaks

\theoremstyle{plain}
\newtheorem{theorem}{Theorem}
\newtheorem{lemma}[theorem]{Lemma}
\newtheorem{corollary}[theorem]{Corollary}
\newtheorem{proposition}[theorem]{Proposition}

\theoremstyle{definition}
\newtheorem{assumption}{Assumption}
\newtheorem{definition}[theorem]{Definition}

\theoremstyle{remark}
\newtheorem{remark}[theorem]{Remark}
\newtheorem{example}[theorem]{Example}
\newtheorem*{example*}{Rosenbrock example}
\newtheorem{construction}[theorem]{Construction}

\renewcommand{\d}{\mathrm{d}}

\title{Wasserstein de-initialization for Markov chains} 
\author{Mareike Hasenpflug\thanks{University of Passau, Germany, Email: mareike.hasenpflug@uni-passau.de}}

\begin{document}

\maketitle

\begin{abstract}
	This article generalizes the de-initialization framework, proposed by \cite{roberts2001markov} for total variation distance, to Wasserstein distances.
	In essence,  de-initialization captures the phenomenon that
	in the presence of certain structural features, the convergence behaviour of a Markov chain can be reduced to that of a ``simpler'' stochastic process.
	Our results allow to measure this in terms of a general Wasserstein distance.
	We apply them to analyse the Wasserstein convergence behaviour of Gibbs sampling, linchpin variable sampling and slice sampling.
\end{abstract}

\section{Introduction}

Markov chains are ubiquitous in statistics,
not least as Markov chain Monte Carlo (MCMC) methods in the generation of random numbers from a given distribution.
Applications like this make convergence guarantees for Markov chains to their invariant distribution crucial.
The current article is concerned with a phenomenon that is very valuable for understanding the convergence behaviour of a Markov chain if it occurs:
In the presence of certain structural features, the convergence behaviour of a Markov chain can be reduced to that of another stochastic process.
The following toy example may help clarify this phenomenon:

\begin{example*}
	The 2-dimensional Rosenbrock distribution $\pi_{\mathrm{RB}}$ has density 
	\[
		p(x,y) \propto\exp\left (-\frac{1}{20}\big(100(y-x^2)^2 + (1-x)^2\big)\right ), \qquad (x,y) \in \mathbb{R}^2,
	\]
	with respect to the 2-dimensional Lebesgue measure,
	and has a characteristic narrow, bend shape (see Figure \ref{F: Rosenbrock experiment}.a)).
	The marginal distribution of the first coordinate 
	is a 1-dimensional normal distribution $\mathcal{N}(1,10)$ with mean 1 and variance 10,
	and for each $x \in \mathbb{R}$ the conditional distribution of the second coordinate given the first is $\mathcal{N}(x^2,0.1)$.
	This structure makes it easy to generate samples from $\pi_{\mathrm{RB}}$.
	Nonetheless for the sake of demonstration, we consider the following MCMC method targeting the Rosenbrock distribution, which also exploits its structure:
	We run a random walk Metropolis Hastings algorithm with invariant distribution $\mathcal{N}(1, 10)$,
	and then we extend each sample to a full 2-dimensional (approximate) sample from $\pi_{\mathrm{RB}}$ by generating the second coordinate from the conditional distribution, see Figure \ref{F: Linchpin variable sampler for Rosenbrock} for an illustration.
	We denote this MCMC method, which is an instance of a linchpin variable sampler, as $\mathtt{LVS}$.
	More details on linchpin variable samplers can be found in Section \ref{S: Linchpin variable sampler}.
	For now, we compare $\mathtt{LVS}$ to a random walk Metropolis Hastings algorithm $\mathtt{MHA}$ that targets the Rosenbrock distribution directly.
	Examining the trace plots in Figure \ref{F: Rosenbrock experiment}, we observe that $\mathtt{LVS}$ explores the state space much better than $\mathtt{MHA}$.
	On an intuitive level, the reason for this phenomenon may become clear if we reconsider Figure \ref{F: Linchpin variable sampler for Rosenbrock}:
	The ``driving'' Markov chain for $\mathtt{LVS}$ is the 1-dimensional random walk Metropolis Hastings algorithm targeting the isotropic, log-concave distribution $\mathcal{N}(1,10)$,
	whereas $\mathtt{MHA}$ has to operate on the narrow, bend Rosenbrock distribution directly.
	In summary, in contrast to $\mathtt{MHA}$, 
	the convergence behaviour of $\mathtt{LVS}$ seems to be controlled by the simpler 1-dimensional random walk Metropolis Hastings algorithm targeting $\mathcal{N}(1,10)$. 
\end{example*}

\begin{figure}
	\centering
	\begin{tikzpicture}[inner sep=2pt, outer sep=4pt]
		\coordinate (xnext) at (8.5,0);
		
		\draw (0,0) node[circle,draw] (x0) {$x_0$};
		\draw (0,-1.5) node[circle,draw] (y0) {$y_0$};
		\draw (2.5,0) node[circle,draw] (x1) {$x_1$};
		\draw (2.5,-1.5) node[circle,draw] (y1) {$y_1$};
		\draw (5,0) node[circle,draw] (x2) {$x_2$};
		\draw (5,-1.5) node[circle,draw] (y2) {$y_2$};
		\draw (7.5,0) node[circle,draw] (x3) {$x_3$};
		\draw (7.5,-1.5) node[circle,draw] (y3) {$y_3$};
		
		\path[->, very thick] (x0) edge node[above] {$M$}  (x1) ;
		\path[->, very thick] (x1) edge node[above] {$M$}(x2);
		\path[->, very thick] (x2) edge node[above] {$M$}(x3);
		\path[->,  thick, gray] (x3) edge (xnext);
		\path[->,very thick] (x1) edge node[right] {$\mathcal{N}(x_1^2, 0.1)$}  (y1);
		\path[->, very thick] (x2) edge node[right] {$\mathcal{N}(x_2^2, 0.1)$} (y2);
		\path[->, very thick] (x3) edge node[right] {$\mathcal{N}(x_3^2, 0.1)$} (y3);
		
		\draw (xnext) node[right] {$\color{gray}\boldsymbol{\ldots}$};
	\end{tikzpicture}
	\caption{Graphical illustration of the transition mechanism of $\mathtt{LVS}$ targeting the Rosenbrock distribution $\pi_{\mathrm{RB}}$,
		where $M$ denotes 
		the random walk Metropolis-Hastings algorithm with invariant distribution $\mathcal{N}(1,10)$.}\label{F: Linchpin variable sampler for Rosenbrock}
\end{figure}
\begin{figure}
	\includegraphics[width=\textwidth]{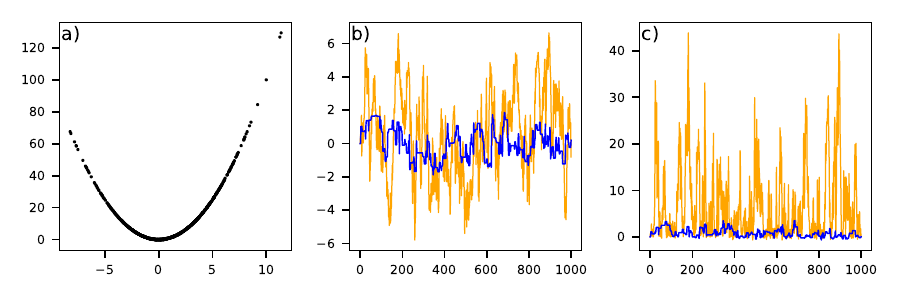}
	\caption{a) Scatter plot of 1.000 samples from the Rosenbrock distribution $\pi_{\mathrm{RB}}$.
		b) \& c) Trace plots of the first and second coordinate of $\mathtt{LVS}$ (orange) and $\mathtt{MHA}$ (blue). Both chains are run for 1000 iterations and initialized at the origin. The proposal distributions of the involved random walk Metropolis Hastings algorithms are Gaussian distributions with variance 1 and the identity as covariance matrix, respectively.}\label{F: Rosenbrock experiment}
\end{figure}

The de-initialization framework proposed by \cite{roberts2001markov} allows to describe this phenomenon in terms of total variation distance.
In our toy example, it yields a total variation distance inequality between $\mathtt{LVS}$ and the 1-dimensional random walk Metropolis Hastings algorithm with invariant distribution $\mathcal{N}(1,10)$.
In other words, de-initialization reveals structural properties of the convergence behaviour.
These can then be exploited to derive upper convergence bounds in total variation distance for the Markov chain of interest,
because one can analyse the (potentially easier to handle) de-initializing stochastic process instead of the original Markov chain.
By reversing the roles it is also possible to obtain lower bounds.

The objective of the current paper is to extend the de-initialization framework to Wasserstein distances,
i.e.\ to formulate conditions when the Wasserstein convergence behaviour of a Markov chain of interest can be reduced to the Wasserstein convergence behaviour of another stochastic process.
We then apply this framework to three different MCMC methods,
linchpin variable sampling, Gibbs sampling and slice sampling.

Wasserstein distances have established as an  important alternative to total variation distance
that is more aware of the topology of the underlying space.
In the  analysis of the convergence behaviour of Markov chains, this is exploited e.g.\  by \cite{mangoubi2018rapid, qu2025computable} and \cite{gibbs2004convergence}, 
which is among the early works on this.
Moreover, in high dimensional settings the use of Wasserstein distances can be beneficial \citep{durmus2015quantitative, qin2022wasserstein, hairer2011asymptotic}.
However, besides extending the applicability, the incorporation of Wasserstein distances into the de-initialization framework also makes it more powerful:
Wasserstein distances are always induced by some cost function, usually a (pseudo) metric, on the underlying state space.
Whereas it is natural to consider the Wasserstein distance for analysing the Markov chain of interest as given,
we are free to choose the Wasserstein distance used to analyse the de-initializing stochastic process.
This gives us a lever to pull to achieve stronger convergence results easier.
We demonstrate this in Section \ref{S: Slice sampling}, where we use this mechanism to derive Wasserstein ergodicity bounds for slice sampling.

As noted by \cite{roberts2001markov}, the concept of de-initialization is related to the notion of sufficient statistics
(see e.g.\ \cite{lauritzen1988extremal, casella1990statistical} and the passage on this topic by \cite{roberts2001markov}),
although they have completely different objectives:
The latter encapsulates the concept that in some respects knowing a sufficient statistics is as informative as knowing all the data,
whereas de-initialization is a tool for understanding the convergence behaviour of Markov chains.
As such, the framework of \cite{roberts2001markov} seems to be  most frequently applied for the total variation distance convergence analysis of Gibbs sampling, where it is used to justify the analysis of related chains in place of the original Gibbs sampler, see e.g.\ \cite{ekvall2021convergence,papaspiliopoulos2020scalable,qin2019convergence,roman2012convergence,roy2007convergence,wang2018geometric,zanella2019scalable}.
Beyond this, total variation distance de-initialization has also been applied to linchpin variable sampling (see \cite{vats2022understanding}),
and its ideas appear in the total variation distance convergence analysis of slice samplers (compare \cite{roberts1999convergence, roberts2002polar}).
Generalizations of the de-initialization framework of \cite{roberts2001markov} beyond total variation distance are not known to the author.

The rest of this paper is organized as follows:
Section \ref{S: Preliminaries} contains some general notation and briefly revises the notions of Wasserstein distance and de-initialization.
Our main results on Wasserstein de-initialization are contained in Section \ref{S: Wasserstein de-initialization}.
In Sections \ref{S: Linchpin variable sampler} and \ref{S: Gibbs sampler}, we show that the Wasserstein convergence behaviour of linchpin variable samplers and Gibbs samplers is equivalent to the behaviour of suitable compound chains,
and in Section \ref{S: Slice sampling}, we use the Wasserstein de-initialization framework to establish Wasserstein ergodicity bounds for slice sampling.
Technical auxiliary results are contained in the appendix.
We use the Rosenbrock example as a running example to illustrate the used concepts on an intuitive level.

\section{Preliminaries}\label{S: Preliminaries}

\subsection{General Notation}

The following section collects some general notation used throughout the paper.		
		
Let $\mathbb{N}$ be the \emph{strictly positive integers} and $\mathbb{N}_0:=\mathbb{N} \cup \{0\}$.

For a Polish space $\mathsf{X}$, we denote by $\mathcal{B}(\mathsf{X})$ its \emph{Borel-$\sigma$-algebra}.
Moreover, we write $\mathcal{M}_1(\mathsf{X}, \mathcal{B}(\mathsf{X}))$ for the \emph{set of all probability measures on $(\mathsf{X}, \mathcal{B}(\mathsf{X}))$}.
Let $(\mathsf{Y}, \mathcal{B}(\mathsf{Y}))$ be a second Polish space.
Unless stated otherwise, the product of two Polish spaces is always assumed to be equipped with the corresponding product topology.
For a probability measure $\mu \in \mathcal{M}_{\mathsf{X} \times \mathsf{Y}}$, we denote by $\mu_{\mathsf{X}} := \mu(\cdot \times \mathsf{Y}) \in \mathcal{M}_{\mathsf{Y}}$ 
and $\mu_{\mathsf{Y}} := \mu(\mathsf{X} \times \cdot ) \in \mathcal{M}_{\mathsf{X}}$ the \emph{marginal distributions} of $\mu$.
The \emph{total variation distance} between two probability measures $\mu, \nu \in \mathcal{M}_{\mathsf{X}}$ is defined as $d_{\mathrm{tv}}(\mu, \nu):= \sup_{\mathsf{A} \in \mathcal{B}(\mathsf{X})} \mu(\mathsf{A}) - \nu(\mathsf{A})$.
Furthermore, we denote the \emph{Dirac measure} for a point $x \in \mathsf{X}$ as $\delta_x$,
and  the $d$-dimensional \emph{Lebesgue measure} on $d$-dimensional Euclidean space $(\mathbb{R}^d, \mathcal{B}(\mathbb{R}^d))$ as $\lambda_d$.

More generally, let $\eta$ be a measure on $(\mathsf{X}, \mathcal{B}(\mathsf{X}))$.
We denote the \emph{restriction} of $\eta$ to a set $\mathsf{A} \in \mathcal{B}(\mathsf{X})$ as $\eta\vert_{\mathsf{A}} := \eta (\cdot \cap \mathsf{A})$.
Moreover, for a measurable function $f: \mathsf{X} \to \mathsf{Y}$, 
the measure $f(\eta)$ is the \emph{image measure} of $\eta \in \mathcal{M}_{\mathsf{X}}$ under $f$ given by $f(\eta)(\mathsf{B}) := \eta(f^{-1}(\mathsf{B}))$, $\mathsf{B} \in \mathcal{B}(\mathsf{Y})$.

If $V: \Omega \to \mathsf{X}$ is a $\mathsf{X}$-valued random variable on some probability space $(\Omega, \mathcal{A}, \mathbb{P})$,
the probability measure $\mathbb{P}^V:= V(\mathbb{P}) \in \mathcal{M}_{\mathsf{X}}$ denotes the \emph{distribution of $V$}.
If $W:\Omega \to \mathsf{Y}$ is a second random variable, we write $\mathbb{P}^{V \mid W}$ for the \emph{conditional distribution of $V$ given $W$},
and $\mathbb{P}^{V \mid W = y}$ for the \emph{conditional distribution of $V$ given $W =y$} for $y \in \mathsf{Y}$. 

A mapping $K: \mathsf{X} \times \mathcal{B}(\mathsf{Y}) \to [0,1]$ is called \emph{Markov} or \emph{transition kernel} if $K(x, \cdot) \in \mathcal{M}_{\mathsf{Y}}$ for all $x \in \mathsf{X}$ and $K(\cdot, \mathsf{B}): \mathsf{X} \to [0,1]$ is measurable for all $\mathsf{B} \in \mathcal{B}( \mathsf{Y})$.
Given $\mu \in \mathcal{M}_{\mathsf{X}}$, we can define a probability measure on $(\mathsf{Y}, \mathcal{B}( \mathsf{Y}))$ by $\mu K := \int_{\mathsf{X}} K(x, \cdot)\ \mu(\d x)$.
Let $H: \mathsf{Y} \times \mathcal{B}( \mathsf{Z}) \to [0,1]$ be a second Markov kernel, where $(\mathsf{Z}, \mathcal{B}(\mathsf{Z}))$ is another Polish space.
Then $KH: \mathsf{X} \times\mathcal{B}(\mathsf{Z}) \to [0,1], (x, \mathsf{C}) \mapsto \int_{\mathsf{Y}} H(y, \mathsf{C})\ K(x, \d y)$ is a Markov kernel, too.
If $\mathsf{Y} = \mathsf{X}$, we also inductively define  $K^1 := K$ and $K^{n+1}:= K^nK$ for $n \in \mathbb{N}$.
Going back to the more general setting of $\mathsf{X}$ not necessarily equal to $\mathsf{Y}$,
let $\mu \otimes K(\cdot) := \int_{\mathsf{X}} \int_{\mathsf{Y}} \mathbbm{1}_{\cdot}(x,y)\ K(x, \d y)\, \mu(\d x) \in \mathcal{M}_{\mathsf{X} \times \mathsf{Y}}$.
Moreover, 
the \emph{tensor product} of $K$ and $H$  is the Markov kernel $K \otimes H: \mathsf{X} \times \mathcal{B}(\mathsf{Y}\times \mathsf{Z}) \to [0,1], (x, \mathsf{D}) \mapsto \int_{\mathsf{Y}} \int_{\mathsf{Z}} \mathbbm{1}_{\mathsf{D}}(y,z)\ H(y, \d z)\, K(x, \d y)$. 
Again, if $\mathsf{X} = \mathsf{Y}$, we can inductively define $K^{\otimes 1} := K$ and $K^{\otimes (n+1)} := K^{\otimes (n+1)} \otimes K$ for $n \in \mathbb{N}$.
If we have a kernel tensor product with exponent smaller equal than zero,
we adopt the convention that the corresponding integral expressions are considered absent.

Furthermore, we denote by $\|\cdot\|_{d}$ the \emph{Euclidean norm} on $(\mathbb{R}^d, \mathcal{B}(\mathbb{R}^d))$.
The index indicating the dimension is sometimes dropped if clear from the context.
Moreover, with an abuse of notation, we also use $\|\cdot\|_{d}$ to denote the induced Euclidean metric.
More generally, for two metrics $\mathrm{d}_{\mathsf{X}}:\mathsf{X}^2 \to [0, \infty)$, $\mathrm{d}_{\mathsf{Y}}: \mathsf{Y}^2 \to [0, \infty)$ their \emph{$\mathfrak{p}$-product metric} is defined as
\[
	\mathrm{d}: (\mathsf{X} \times \mathsf{Y})^2\to [0, \infty), \quad
	\big((x_1,y_1), (x_2,y_2)\big) \mapsto \begin{dcases}
													\big(\mathrm{d}_{\mathsf{X}}(x_1,x_2)^\mathfrak{p} + \mathrm{d}_{\mathsf{Y}}(y_1,y_2)^\mathfrak{p}\big)^{1/\mathfrak{p}}, & \mathfrak{p} \in [1, \infty),\\
													\max\big\{\mathrm{d}_{\mathsf{X}}(x_1,x_2), \mathrm{d}_{\mathsf{Y}}(y_1,y_2)\big\}, & \mathfrak{p} = \infty,
	\end{dcases}
\]
for $\mathfrak{p} \in [1, \infty]$.
Observe that the $\mathfrak{p}$-product metric satisfies for all $x_1,x_2 \in \mathsf{X}, y_1, y_2 \in \mathsf{Y}$
\begin{equation}\label{Eq: product metric ineqaulity}
	\max\big\{\mathrm{d}_{\mathsf{X}}(x_1,x_2), \mathrm{d}_{\mathsf{Y}}(y_1,y_2)\big\}
	\leq \mathrm{d}\big((x_1,y_1), (x_2,y_2)\big)
	\leq \mathrm{d}_{\mathsf{X}}(x_1,x_2) + \mathrm{d}_{\mathsf{Y}}(y_1,y_2),
\end{equation}
where the upper bound follows by Minkowski inequality.

Finally, for two real numbers $r,s \in \mathbb{R}$ we also use $r \wedge s$ to denote their minimum and $r \vee s$ to denote their maximum.
With an abuse of notation, we sometimes write $r \wedge s$ although one of the real numbers is not actually defined, say $s$.
In this case we adopt the convention that $r \wedge s := r$.

\subsection{Wasserstein distances}

Let $(\mathsf{X}, \mathcal{B}(\mathsf{X}))$ be a Polish space equipped with a lower semi-continuous pseudo\footnote{Positive semi-definiteness is allowed to be violated.} metric $c: \mathsf{X} \times \mathsf{X} \to [0, \infty)$,
which we also call \emph{cost function}.
It can be used to induce a notion of distance between probability measures on $(\mathsf{X}, \mathcal{B}(\mathsf{X}))$.
To this end, for $\mu, \nu \in \mathcal{M}_{\mathsf{X}}$ we call
\[
	\mathcal{C}(\mu, \nu) := \Big\{\eta \in \mathcal{M}_{\mathsf{X}^2}\ \big|\ \eta(\mathsf{A} \times \mathsf{X})= \mu(\mathsf{A})\quad \text{and}\quad \eta(\mathsf{X} \times\mathsf{A} )= \nu(\mathsf{A}),\ \forall \mathsf{A} \in \mathcal{B}(\mathsf{X})\Big\}
\]
the \emph{set of couplings} between $\mu$ and $\nu$.
Then for $q \in [1, \infty)$ the \emph{$q$-Wasserstein distance} of $\mu$ and $\nu$ induced by the pseudo metric $c$ is defined as 
\[
	W_{c}^{(q)}(\mu, \nu) := \left(\inf_{\eta \in \mathcal{C}(\mu, \nu)} \int_{\mathsf{X}^2} c^{q}(x,y)\ \eta\big(\d (x,y)\big)\right)^{1/q}.
\]
If $q= 1$, we omit the superscript for the Wasserstein distance.
Interpreting a coupling as a description of how to transform the two marginal distributions into each other,
roughly speaking the $q$-Wasserstein distance records the minimum effort (measured with $c$) to go from $\mu$ to $\nu$.
Observe that there always exists an optimal coupling achieving the infimum in the definition of the Wasserstein distance, see e.g.\ \cite[Theorem 4.1]{villani2009optimal}.

Finally note, the $q$-Wasserstein distance can be even a complete metric on the space of probability measures that satisfy a suitable $q$th moment condition,
if $c$ is a metric that induces the topology on $\mathsf{X}$, see \cite[Section 6]{villani2009optimal}.

\subsection{De-initialization}

In this section we briefly explain the concept of de-initialization and its main implication as proposed by \citep{roberts2001markov},
in the notation and terminology used throughout this paper.

Let $K: \mathsf{X} \times \mathcal{B}(\mathsf{X}) \to [0,1]$ be a Markov kernel on a Polish space $(\mathsf{X}, \mathcal{B}(\mathsf{X}))$. 
This kernel, together with an initial distribution $\mu$ on $(\mathsf{X}, \mathcal{B}(\mathsf{X}))$, specifies the distribution of a  discrete time Markov chain with state space $(\mathsf{X}, \mathcal{B}(\mathsf{X}))$.
By varying the initial distribution we obtain a whole family of Markov chains induced by $K$.
Our aim is to relate the convergence behaviour of these Markov chains to the behaviour of a suitable family of discrete time stochastic processes.
The state space of these stochastic processes does not need to be the same as for our Markov chains of interest.
We assume it is a second Polish space $(\mathsf{Y}, \mathcal{B}(\mathsf{Y}))$.
Like the Markov chains induced by $K$, we index the family of stochastic processes by the initial distributions $\mathcal{M}_{\mathsf{X}}$ on $(\mathsf{X}, \mathcal{B}(\mathsf{X}))$.
More precisely, to describe this family of stochastic processes we use a function 
\begin{equation}\label{Eq: Definition of Pi}
	\Pi: \mathcal{M}_{\mathsf{X}} \times \mathbb{N} \to \mathcal{M}_{\mathsf{Y}}.
\end{equation}
For $\mu \in \mathcal{M}_{\mathsf{X}}$ and $n \in \mathbb{N}$, the probability measure $\Pi(\mu,n)$ on $(\mathsf{Y}, \mathcal{B}( \mathsf{Y}))$ should be understood as the distribution of the $n$th state of the stochastic processes associated with the Markov chain with 
initial distribution $\mu$.

\begin{example*}
	For the Rosenbrock example the setup from the previous paragraph would look as follows:
	The role of the Markov chain of interest with transition kernel $K$ is taken by the $\mathbb{R}^2$-valued linchpin variable sampler $\mathtt{LVS}$ targeting the Rosenbrock distribution.
	As already explained, roughly speaking, instead of the whole chain, it is in fact only the $\mathbb{R}$-valued random walk Metropolis Hastings algorithm  $M$ with invariant distribution $\mathcal{N}(1,10)$ that is relevant for the convergence behaviour of $\mathtt{LVS}$.
	To capture this with the de-initialization framework, $M$ should take the role of the de-initializing stochastic process.
	For the mapping $\Pi$ this means that $(\Pi(\mu,n))_{n \in \mathbb{N}}$ are the distributions of the trajectory of $M$ if $\mathtt{LVS}$ is started with initial distribution $\mu \in \mathcal{M}_{\mathbb{R}^2}$.
\end{example*}

Of course, we cannot expect that any family of stochastic processes governs the convergence behaviour of $K$.
\cite{roberts2001markov} provide a criterion on suitable conditional distributions that ensures this.
To make sense of these conditional distributions, we need random variables that are defined on the same underlying probability space,
such that in the following we will always assume that we can make the following construction.

\begin{construction}\label{A: De-initializing construction}
	Let $(\Omega, \mathcal{A})$ be a measurable space equipped with a sequence of random variables $X_n: \Omega \to \mathsf{X}$, $n \in \mathbb{N}_0$, 
	a sequence of random variables $Y_n: \Omega \to \mathsf{Y}$, $n \in \mathbb{N}$, and a family of probability measures $(\mathbb{P}_\mu)_{\mu \in \mathcal{M}_{\mathsf{X}}}$, such that
	\begin{itemize}
		\item $(X_n)_{n \in \mathbb{N}_0}$ is a Markov chain with transition kernel $K$ and initial distribution $\mu$ under $\mathbb{P}_\mu$, i.e.
		$\mathbb{P}_\mu^{(X_0, \ldots, X_n)} = \mu \otimes K^{\otimes n}$ for all $\mu \in \mathcal{M}_{\mathsf{X}}, n \in \mathbb{N}_0$,
		see e.g.\ \cite[Theorem 1.3.4]{douc2018markov}.
		\item the mapping $\mathsf{X} \times \mathcal{B}(\mathsf{Y}) \to [0,1], (x, \mathsf{B}) \mapsto 		\mathbb{P}^{Y_n}_x(\mathsf{B}):=\mathbb{P}^{Y_n}_{\delta_x}(\mathsf{B})$ is a transition kernel for all $n \in \mathbb{N}$, and
		\begin{equation}\label{Eq: De-initializing process identity}
			\int_{\mathsf{X}} \mathbb{P}_x^{Y_n}(\cdot) \ \mu(\d x) = \Pi(\mu, n), 
			\qquad \forall\ \mu \in \mathcal{M}_{\mathsf{X}}, n \in \mathbb{N}.
		\end{equation}
	\end{itemize}
\end{construction}  

Roughly speaking, this means that we can construct two stochastic processes on the same underlying probability space, 
one of them our Markov chain of interest and the other one described by the mapping $\Pi$, 
and this for all possible initial distribution for our Markov chain of interest.

De-initialization captures now the following concept: 
The random variable $Y_n$ contains enough information about our Markov chain $(X_n)_{n \in \mathbb{N}}$ to specify the distribution of $X_n$ independent of its initial state $X_0$.
The Markov chain $(X_n)_{n \in \mathbb{N}}$ is in this sense ``de-initialized'' by the stochastic process $(Y_n)_{n \in \mathbb{N}}$.
By means of Construction \ref{A: De-initializing construction},
we can express this as a property of the transition kernel $K$ and the mapping $\Pi$ rather than a property of individual Markov chains with fixed initial distribution.

\begin{definition}\label{D: De-initialization}
	Let $(\mathsf{X}, \mathcal{B}(\mathsf{X}))$ and $(\mathsf{Y}, \mathcal{B}(\mathsf{Y}))$ be two Polish spaces.
	We say that a mapping $\Pi: \mathcal{M}_{\mathsf{X}} \times \mathbb{N} \to \mathcal{M}_{\mathsf{Y}}$  is \emph{de-initializing} for a Markov kernel $K: \mathsf{X} \times \mathcal{B}(\mathsf{X})\to [0,1]$ if we can construct objects as in Construction \ref{A: De-initializing construction} such that
	for all $n \in \mathbb{N}$ there exists a Markov kernel $D_n: \mathsf{Y} \times \mathcal{B}(\mathsf{X}) \to [0,1]$ satisfying
	\begin{equation}\label{Eq: De-initialization condition}
		\mathbb{P}_x^{X_n\mid Y_n =\, \cdot } = D_n \quad \mathbb{P}_x^{Y_n}\text{-almost surely}, \qquad \forall\ x \in \mathsf{X}.
	\end{equation}
\end{definition}

\begin{example*}
	In intuitive words, for the explicit setting of the Rosenbrock example the previous definition means:
	Given the current state of the 1-dimensional random walk Metropolis Hastings algorithm $M$ with invariant distribution $\mathcal{N}(1,10)$, 
	the current state of the linchpin variable sampler $\mathtt{LVS}$ targeting $\pi_{\mathrm{RB}}$ must not depend of the initial state of $\mathtt{LVS}$ anymore. 
	The relevant conditional distribution here (taking the role of the kernels $D_n$) is $\mathcal{N}(x^2, 0.1)$, where $x$ is the current state of $M$,
	which indeed does not depend on the initial state of $\mathtt{LVS}$.
	Hence, we can be confident that also the formal definition of de-initialization is satisfied.
\end{example*}

\cite{roberts2001markov} show that if $\Pi$ is de-initializing for $K$, the convergence of $K$ in total variation distance is dominated by $\Pi$.

\begin{theorem}[{\cite[Theorem 1]{roberts2001markov}}]\label{Thm: De-initialization}
	If a Markov kernel $K$ on a Polish space $(\mathsf{X}, \mathcal{B}( \mathsf{X}))$ is de-initialized by a mapping $\Pi: \mathcal{M}_{\mathsf{X}} \times \mathbb{N} \to \mathcal{M}_{\mathsf{Y}}$, where $(\mathsf{Y}, \mathcal{B}(\mathsf{Y}))$ is also a Polish space, then we have
	\[
		d_{\mathrm{tv}}\big(\mu K^n, \nu K^n\big) \leq d_{\mathrm{tv}}\big(\Pi(\mu,n), \Pi(\nu, n)\big),\qquad \forall\ n \in \mathbb{N}, \mu,\nu \in \mathcal{M}_{\mathsf{X}}.
	\]
\end{theorem}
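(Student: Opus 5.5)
The plan is to express both $\mu K^n$ and $\nu K^n$ as images of $\Pi(\mu,n)$ and $\Pi(\nu,n)$ under one and the same Markov kernel $D_n$. Then the claim follows from the elementary fact that applying a common Markov kernel cannot increase total variation distance.

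\emph{Step 1 (disintegration at a point).} Fix $n\in\mathbb{N}$ and $x\in\mathsf{X}$. By Construction \ref{A: De-initializing construction}, under $\mathbb{P}_x$ the variable $X_n$ has law $\delta_x K^n = K^n(x,\cdot)$. By the de-initialization condition \eqref{Eq: De-initialization condition}, the regular conditional distribution of $X_n$ given $Y_n$ equals $D_n$ $\mathbb{P}_x^{Y_n}$-almost surely. Disintegrating the joint law of $(Y_n,X_n)$ under $\mathbb{P}_x$ therefore gives, for every $\mathsf{A}\in\mathcal{B}(\mathsf{X})$,
\[
K^n(x,\mathsf{A}) = \mathbb{P}_x(X_n\in\mathsf{A}) = \int_{\mathsf{Y}} D_n(y,\mathsf{A})\ \mathbb{P}_x^{Y_n}(\d y) = \big(\mathbb{P}_x^{Y_n} D_n\big)(\mathsf{A}).
\]

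\emph{Step 2 (mixing over the initial law).} Integrate this identity against $\mu(\d x)$ and use Fubini. This is justified because $x\mapsto \mathbb{P}^{Y_n}_x(\mathsf{B})$ is a transition kernel by assumption, and $y\mapsto D_n(y,\mathsf{A})$ is measurable. Together with \eqref{Eq: De-initializing process identity} we get
\[
\mu K^n(\mathsf{A}) = \int_{\mathsf{X}}\int_{\mathsf{Y}} D_n(y,\mathsf{A})\ \mathbb{P}_x^{Y_n}(\d y)\,\mu(\d x) = \big(\Pi(\mu,n)D_n\big)(\mathsf{A}),
\]
and likewise $\nu K^n = \Pi(\nu,n) D_n$. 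The key point is that $D_n$ does not depend on $x$, so the same kernel appears for both initial distributions.

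\emph{Step 3 (contraction).} For probability measures $\alpha,\beta$ on $\mathsf{Y}$ and any $\mathsf{A}\in\mathcal{B}(\mathsf{X})$, write $f=D_n(\cdot,\mathsf{A})\in[0,1]$. Then
\[
\alpha D_n(\mathsf{A})-\beta D_n(\mathsf{A}) = \int f\,\d(\alpha-\beta) \le (\alpha-\beta)^+(\mathsf{Y}) = d_{\mathrm{tv}}(\alpha,\beta),
\]
using the Hahn–Jordan decomposition of the signed measure $\alpha-\beta$. Taking the supremum over $\mathsf{A}$, with $\alpha=\Pi(\mu,n)$ and $\beta=\Pi(\nu,n)$, proves the theorem.

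The main obstacle is Step 1, specifically the measure-theoretic care needed there. The conditional distribution is only determined $\mathbb{P}_x^{Y_n}$-almost surely, and the exceptional null set may vary with $x$. This causes no problem: the identity $K^n(x,\cdot)=\mathbb{P}_x^{Y_n}D_n$ is established separately for each $x$, and null sets are irrelevant inside the integral against $\mathbb{P}_x^{Y_n}$. Only after that is the identity integrated over $x$. Since the spaces are Polish, regular conditional distributions exist, so \eqref{Eq: De-initialization condition} is meaningful and the disintegration formula holds.
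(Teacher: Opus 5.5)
Your proof is correct, and it differs from the paper's in the contraction step. The paper does not prove this statement on its own. It cites \cite{roberts2001markov} and then recovers the statement as the special case of Theorem~\ref{Thm: Wasserstein de-initialization} with $\mathrm{d}_{\mathsf{X}}=\mathbbm{1}_{\{x_1\neq x_2\}}$, $\mathrm{d}_{\mathsf{Y}}=\mathbbm{1}_{\{y_1\neq y_2\}}$, $q=1$ and $C=1$. For this it uses that $d_{\mathrm{tv}}(D_n(y_1,\cdot),D_n(y_2,\cdot))\le \mathbbm{1}_{\{y_1\neq y_2\}}$ holds trivially.

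Your Step~1 is exactly the paper's identity \eqref{Eq: conditional distribution identity}; the paper's $K(x,\mathsf{A})$ there should read $K^n(x,\mathsf{A})$, as you have it. Your Step~2 spells out the integration against $\mu$ via \eqref{Eq: De-initializing process identity}, which the paper compresses into the claim that $\eta\,\Xi$ is a coupling of $\mu K^n$ and $\nu K^n$.

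In the contraction step, the paper glues an optimal coupling $\eta$ of $\Pi(\mu,n)$ and $\Pi(\nu,n)$ with a measurable family $\Xi$ of optimal couplings of $D_n(y_1,\cdot)$ and $D_n(y_2,\cdot)$. That family comes from the measurable selection result of \cite{zhang2000existence}, and the paper then bounds the cost of $\eta\,\Xi$. You instead use the Hahn--Jordan fact that $\int f\,\d(\alpha-\beta)\le d_{\mathrm{tv}}(\alpha,\beta)$ for every measurable $f\colon\mathsf{Y}\to[0,1]$, applied to $f=D_n(\cdot,\mathsf{A})$.

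Your route is more elementary and self-contained: it needs neither the existence of optimal couplings nor a measurable selection theorem. It is, however, tailored to the variational description of total variation. The paper's coupling construction is what lets it handle arbitrary lower semi-continuous pseudo metric costs, any $q\in[1,\infty)$ and a general Lipschitz constant $C$ in one argument, with the total variation case following at once.
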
 

\begin{example*}
	If we would apply the de-initialization framework to the Rosenbrock example as described above,
	the previous theorem would yield:
	In terms of total variation distance, the convergence of the linchpin variable sampler $\mathtt{LVS}$ targeting $\pi_{\mathrm{RB}}$ is dominated by the convergence of the $\mathbb{R}$-valued random walk Metropolis Hastings algorithm with invariant distribution $\mathcal{N}(0,1)$.
\end{example*}

\section{Wasserstein de-initialization}\label{S: Wasserstein de-initialization}

The main objective of this paper is to generalize the notion of de-initialization suitably to obtain a result like Theorem \ref{Thm: De-initialization} for general Wasserstein distances.
To this end, we equip both state spaces $(\mathsf{X}, \mathcal{B}(\mathsf{X}))$ and $(\mathsf{Y}, \mathcal{B}(\mathsf{Y}))$ with cost functions for the Wasserstein distance.

\begin{assumption}\label{A: Pseudometrics}
	Fix $q \in [1, \infty)$.
	Let $(\mathsf{X}, \mathcal{B}(\mathsf{X}))$ and $(\mathsf{Y}, \mathcal{B}(\mathsf{Y}))$ be two Polish spaces equipped with lower semi-continuous pseudo metrics $\mathrm{d}_{\mathsf{X}}: \mathsf{X} \times \mathsf{X} \to [0, \infty)$ and $\mathrm{d}_{\mathsf{Y}}: \mathsf{Y} \times \mathsf{Y} \to [0, \infty)$.
\end{assumption}

To work for the Wasserstein distances induced by $\mathrm{d}_{\mathsf{X}}$ and $\mathrm{d}_{\mathsf{Y}}$ we need to extend the definition of de-initialization by a suitable Lipschitzness condition on the Markov kernels $D_n$.

\begin{definition}\label{D: Wasserstein de-initialization}
	In the setting of Assumption \ref{A: Pseudometrics},
	we say that a mapping $\Pi: \mathcal{M}_{\mathsf{X}} \times \mathbb{N} \to \mathcal{M}_{\mathsf{Y}}$ is \emph{$q$-Wasserstein de-initializing} (with respect to $\mathrm{d}_{\mathsf{X}}$ and $\mathrm{d}_{\mathsf{Y}}$) for a Markov kernel $K: \mathsf{X} \times \mathcal{B}( \mathsf{X})\to [0,1]$ if $\Pi$ is de-initializing for $K$ and there exists a constant $C \in [0, \infty)$ such that
	for the Markov kernels $D_n$ from Definition \ref{D: De-initialization} holds
	\begin{equation}\label{Eq: Wasserstein de-initializing property}
		W_{\mathrm{d}_{\mathsf{X}}}^{(q)}\big(D_n(y_1, \cdot), D_n(y_2, \cdot)\big)
		\leq C \cdot \mathrm{d}_{\mathsf{Y}}(y_1, y_2), \qquad \forall y_1,y_2 \in \mathsf{Y}, n \in \mathbb{N}.
	\end{equation}
	We say also the $\Pi$ is $q$-Wasserstein de-initializing for $K$ with constant $C$.
\end{definition}

If $\mathrm{d}_{\mathsf{X}}(x_1,x_2) = \mathbbm{1}_{\{x_1 \neq x_2\}}$, $\forall x_1,x_2 \in \mathsf{X}$, and $\mathrm{d}_{\mathsf{Y}}(y_1,y_2) = \mathbbm{1}_{\{y_1 \neq y_2\}}$, $\forall y_1,y_2 \in \mathsf{Y}$,
the Wasserstein distances $W_{\mathrm{d}_{\mathsf{X}}}$ and $W_{\mathrm{d}_{\mathsf{Y}}}$ coincide with the total variation distance.
In this case
\begin{align*}
	d_{\mathrm{tv}}\big(D_n(y_1, \cdot), D_n(y_2, \cdot)\big) \leq
	\begin{rcases}
		\begin{dcases}
			0, & y_1 = y_2\\
			1, & y_1 \neq y_2
		\end{dcases}
	\end{rcases}
	= \mathbbm{1}_{\{y_1 \neq y_2\}}, \qquad \forall y_1,y_2 \in \mathsf{Y}, n \in \mathbb{N},
\end{align*}
trivially holds and we recover the definition of de-initialization.
Therefore, the following theorem covers Theorem \ref{Thm: De-initialization} as a special case.

\begin{theorem}\label{Thm: Wasserstein de-initialization}
	In the setting of Assumption \ref{A: Pseudometrics}, let a Markov kernel $K: \mathsf{X} \times \mathcal{B}( \mathsf{X}) \to [0,1]$ be $q$-Wasserstein de-initialized by a mapping $\Pi: \mathcal{M}_{\mathsf{X}}\times \mathbb{N} \to \mathcal{M}_{\mathsf{Y}}$ with constant $C$.
	Then we have
	\[
		W_{\mathrm{d}_{\mathsf{X}}}^{(q)}\big(\mu K^n, \nu K^n\big)\leq C \cdot W_{\mathrm{d}_{\mathsf{Y}}}^{(q)}\big( \Pi(\mu,n), \Pi(\nu, n)\big), \qquad \forall\ n \in \mathbb{N}, \mu,\nu \in \mathcal{M}_{\mathsf{X}}.
	\]
\end{theorem}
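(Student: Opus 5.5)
The first step is to show that $\mu K^n = \Pi(\mu,n) D_n$ for every $\mu \in \mathcal{M}_{\mathsf{X}}$ and $n \in \mathbb{N}$. Since $\mathbb{P}_x^{X_n} = K^n(x,\cdot)$ by Construction \ref{A: De-initializing construction}, disintegrating the joint law of $(X_n,Y_n)$ under $\mathbb{P}_x$ along $Y_n$ and using the de-initialization condition \eqref{Eq: De-initialization condition} gives
\[
K^n(x,\mathsf{A}) = \int_{\mathsf{Y}} \mathbb{P}_x^{X_n\mid Y_n=y}(\mathsf{A})\ \mathbb{P}_x^{Y_n}(\d y) = \int_{\mathsf{Y}} D_n(y,\mathsf{A})\ \mathbb{P}_x^{Y_n}(\d y).
\]
Here it is important that $D_n$ does not depend on $x$, so the exceptional null sets are harmless. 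Integrating against $\mu(\d x)$, applying Fubini (which is justified because $x\mapsto \mathbb{P}^{Y_n}_x$ is a kernel), and then using \eqref{Eq: De-initializing process identity} yields $\mu K^n = \int_{\mathsf{Y}} D_n(y,\cdot)\,\Pi(\mu,n)(\d y)$. The same holds for $\nu$. The theorem is thereby reduced to a general contraction statement: if a kernel $D$ satisfies $W^{(q)}_{\mathrm{d}_{\mathsf{X}}}(D(y_1,\cdot),D(y_2,\cdot)) \le C\,\mathrm{d}_{\mathsf{Y}}(y_1,y_2)$, then $W^{(q)}_{\mathrm{d}_{\mathsf{X}}}(\alpha D,\beta D) \le C\, W^{(q)}_{\mathrm{d}_{\mathsf{Y}}}(\alpha,\beta)$ for all $\alpha,\beta\in\mathcal{M}_{\mathsf{Y}}$.

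To prove the contraction statement, I take an optimal coupling $\eta \in \mathcal{C}(\alpha,\beta)$ for $W^{(q)}_{\mathrm{d}_{\mathsf{Y}}}$; it exists because $\mathrm{d}_{\mathsf{Y}}$ is lower semi-continuous. For each pair $(y_1,y_2)$ I choose an optimal coupling $\gamma_{y_1,y_2} \in \mathcal{C}(D(y_1,\cdot),D(y_2,\cdot))$ for $W^{(q)}_{\mathrm{d}_{\mathsf{X}}}$, and I set $\Gamma := \int \gamma_{y_1,y_2}\,\eta(\d(y_1,y_2))$. The marginals of $\Gamma$ are $\alpha D$ and $\beta D$, so $\Gamma$ is an admissible coupling, and
\[
\int \mathrm{d}_{\mathsf{X}}^q \,\d\Gamma = \int W^{(q)}_{\mathrm{d}_{\mathsf{X}}}\big(D(y_1,\cdot),D(y_2,\cdot)\big)^q\,\eta(\d(y_1,y_2)) \le C^q \int \mathrm{d}_{\mathsf{Y}}^q\,\d\eta = C^q\, W^{(q)}_{\mathrm{d}_{\mathsf{Y}}}(\alpha,\beta)^q.
\]
Taking $q$-th roots gives the claim.

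The main obstacle is that $(y_1,y_2)\mapsto\gamma_{y_1,y_2}$ must be a measurable kernel for $\Gamma$ to be well defined. I would invoke the measurable selection of optimal couplings from \cite[Corollary 5.22]{villani2009optimal}. That result requires the map $(y_1,y_2)\mapsto (D(y_1,\cdot),D(y_2,\cdot))$ to be measurable into $\mathcal{M}_{\mathsf{X}}^2$, which holds because $D$ is a kernel. It also requires the cost $\mathrm{d}_{\mathsf{X}}^q$ to be lower semi-continuous, which holds by Assumption \ref{A: Pseudometrics}. If the cost is unbounded one may have to truncate it to $\mathrm{d}_{\mathsf{X}}^q\wedge k$ and pass to the limit by monotone convergence. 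As a fallback, I would use $\varepsilon$-optimal couplings on a countable partition of $\mathsf{Y}^2$ and then let $\varepsilon\to0$. If the Wasserstein distance is infinite there is nothing to prove.
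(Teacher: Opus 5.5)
Your argument has the same structure as the paper's proof. You derive $\mu K^n=\int_{\mathsf{Y}} D_n(y,\cdot)\,\Pi(\mu,n)(\d y)$ from \eqref{Eq: De-initialization condition} and \eqref{Eq: De-initializing process identity}. You then glue an optimal coupling $\eta$ of $\Pi(\mu,n)$ and $\Pi(\nu,n)$ with a measurable family of optimal couplings of $D_n(y_1,\cdot)$ and $D_n(y_2,\cdot)$.

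There is a genuine gap, and it sits in the step you flagged yourself. \cite[Corollary 5.22]{villani2009optimal} is stated for a \emph{continuous} real-valued cost bounded below. Its proof rests on the weak closedness of the set of optimal plans, which fails for merely lower semi-continuous costs. So your claim that it only needs $\mathrm{d}_{\mathsf{X}}^q$ to be lower semi-continuous misstates its hypothesis, and Assumption \ref{A: Pseudometrics} gives nothing more. This is not a corner case. The discrete metric $\mathbbm{1}_{\{x_1\neq x_2\}}$, through which the theorem contains Theorem \ref{Thm: De-initialization}, is lower semi-continuous but not continuous unless $\mathsf{X}$ is discrete. Your fallbacks do not close the gap:
\begin{itemize}
\item $\mathrm{d}_{\mathsf{X}}^q\wedge k$ is just as discontinuous, and unboundedness was never the obstacle.
\item One $\varepsilon$-optimal coupling per cell of a countable partition of $\mathsf{Y}^2$ is not a coupling of $D(y_1,\cdot)$ and $D(y_2,\cdot)$ for every $(y_1,y_2)$ in the cell, since these marginals vary within the cell.
\end{itemize}

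Any of the following repairs the step, after which the rest of your argument goes through unchanged.
\begin{itemize}
\item The paper closes it with \cite[Theorem 1.1]{zhang2000existence}, which supplies the measurable kernel $\Xi$ of optimal couplings in the lower semi-continuous setting. With that citation your proof is the paper's.
\item You can bypass measurable selection altogether. All you use is $W^{(q)}_{\mathrm{d}_{\mathsf{X}}}(\alpha D,\beta D)^q\le\int W^{(q)}_{\mathrm{d}_{\mathsf{X}}}\big(D(y_1,\cdot),D(y_2,\cdot)\big)^q\,\eta(\d(y_1,y_2))$. This is the mixture inequality \cite[Theorem 4.8]{villani2009optimal}, valid for lower semi-continuous costs; the paper uses it in Section \ref{S: Slice sampling}.
\item The same inequality follows from Kantorovich duality. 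For bounded continuous $\phi,\psi$ with $\phi(x_1)+\psi(x_2)\le\mathrm{d}_{\mathsf{X}}(x_1,x_2)^q$, the quantity $\int\phi\,\d(\alpha D)+\int\psi\,\d(\beta D)$ equals $\int\big(\int\phi\,\d D(y_1,\cdot)+\int\psi\,\d D(y_2,\cdot)\big)\,\eta(\d(y_1,y_2))$. The inner term is at most $W^{(q)}_{\mathrm{d}_{\mathsf{X}}}(D(y_1,\cdot),D(y_2,\cdot))^q$, which is Borel in $(y_1,y_2)$ because the optimal cost is lower semi-continuous. Taking the supremum over $(\phi,\psi)$ gives the inequality.
\item If you want to keep Corollary 5.22, apply it to bounded continuous costs $c_k\uparrow\mathrm{d}_{\mathsf{X}}^q$. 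Glue the resulting plans into couplings $\Gamma_k$ of $\alpha D$ and $\beta D$, which form a tight family since the marginals are fixed, and take a weak limit point $\Gamma$. Then for $j\le k$ you have $\int c_j\,\d\Gamma\le\liminf_k\int c_k\,\d\Gamma_k\le\int W^{(q)}_{\mathrm{d}_{\mathsf{X}}}(D(y_1,\cdot),D(y_2,\cdot))^q\,\d\eta$. Monotone convergence in $j$ finishes.
\end{itemize}
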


\begin{proof}
	Throughout this proofs we use the objects from Construction \ref{A: De-initializing construction}.
	Fix $n \in \mathbb{N}$.
	By definition of the conditional distribution we have for all $x \in \mathsf{X}$ that
	\begin{equation}\label{Eq: conditional distribution identity}
		K(x, \mathsf{A}) = \mathbb{P}_{x}^{X_n}(\mathsf{A}) = \int_{\mathsf{Y}} \mathbb{P}_{x}^{X_n \mid Y_n = y}(\mathsf{A}) \ \mathbb{P}_x^{Y_n}(\d y)
		\overset{\eqref{Eq: De-initialization condition}}{=} \int_{\mathsf{Y}} D_n(y, \mathsf{A})\ \mathbb{P}_x^{Y_n}(\d y), \qquad \mathsf{A} \in \mathcal{B}(\mathsf{X}).
	\end{equation}
	Fix $\mu, \nu \in \mathcal{M}_{\mathsf{X}}$.
	By \cite[Theorem 1.1]{zhang2000existence}
	there exists a Markov kernel
	$\Xi: \mathsf{Y}^2 \times \mathcal{B}(\mathsf{X}^2) \to [0,1]$
	such that for all $y_1,y_2 \in \mathsf{Y}$ the probability measure $\Xi\big((y_1,y_2), \cdot\big)$ is an optimal coupling of $D_n(y_1, \cdot)$ and $D_n(y_2, \cdot)$ with respect to $W^{(q)}_{\mathrm{d}_{\mathsf{X}}}$.
	Moreover, let $\eta$ be an optimal coupling of $\Pi(\mu,n)$ and $\Pi(\nu,n)$ with respect to $W^{(q)}_{\mathrm{d}_{\mathsf{Y}}}$.
	Then $\eta\, \Xi$ is a coupling of $\mu K^n$ and $\nu K^n$ by virtue of \eqref{Eq: conditional distribution identity}.
	Hence we get from the optimality of $\Xi\big((y_1,y_2), \cdot\big)$ and $\eta$  that
	\begin{align*}
		&W^{(q)}_{\mathrm{d}_{\mathsf{X}}}\big(\mu K^n, \nu K^n\big)
		\leq \left(\int_{\mathsf{X}^2}\mathrm{d}_{\mathsf{X}}(x_1,x_2)^{q}\ \eta\,\Xi\big(\d(x_1, x_2)\big)\right)^{1/q}\\
		&\qquad= \left(\int_{\mathsf{Y}^2} \int_{\mathsf{X}^2} \mathrm{d}_{\mathsf{X}}(x_1, x_2)^{q}\ \Xi\big((y_1,y_2), \d (x_1, x_2)\big)\,\eta\big(\d (y_1, y_2)\big)\right)^{1/q}\\
		&\qquad = \left(\int_{\mathsf{Y}^2} \left(W_{\mathrm{d}_{\mathsf{X}}}^{(q)}\big(D_n(y_1, \cdot), D_n(y_2, \cdot)\big)\right)^{q}\ \eta\big(\d (y_1, y_2)\big)\right)^{1/q}\\
		&\qquad\overset{\eqref{Eq: Wasserstein de-initializing property}}{\leq} C \left(\int_{\mathsf{Y}^2}  \mathrm{d}_{\mathsf{Y}}(y_1,y_2)^{q}\ \eta\big(\d (y_1, y_2)\big)\right)^{1/q}
		= C \cdot W_{\mathrm{d}_{\mathsf{Y}}}^{(q)}\big(\Pi(\mu, n), \Pi(\nu, n)\big).
	\end{align*}
\end{proof}

Because the definitions of de-initialization and Wasserstein de-initialization require Construction \ref{A: De-initializing construction},
the verification of these properties, although conceptually quite intuitive, can still be technically rather tedious.
Therefore, in the following two sections we provide two abstract settings in which we have (Wasserstein) de-initialization and which are easier to verify.
We illustrate their application in Section \ref{S: Applications}.

\subsection{Wasserstein de-initialization through extended lumping}

The first setting we consider is closely related to the concept of lumping.
Especially for finite state spaces and continuous time Markov chains this seems to be a well studied problem, see for example \cite{dynkin1965Markov, kemeny1976finite},
and \cite{rosenblatt1971Markov} for some results for discrete time Markov chains on general state spaces.
The following remark treats the aspects of lumping required in this paper.
\begin{remark}\label{R: Lumping}
	Let $K:\mathsf{X}\times \mathcal{B}(\mathsf{X})\to [0,1]$ be a Markov kernel on a Polish space $(\mathsf{X}, \mathcal{B}(\mathsf{X}))$.
	If there exists a measurable, surjective function $f: \mathsf{X} \to \mathsf{Y}$ onto another Polish space $(\mathsf{Y}, \mathcal{B}( \mathsf{Y}))$ such that
	\[
		K\big(x_1, f^{-1}(\mathsf{A} )\big) = K\big(x_2, f^{-1}(\mathsf{A})\big) \quad \forall\ x_1,x_2 \in \mathsf{X} \text{ with } f(x_1)= f(x_2), \forall \ \mathsf{A} \in \mathcal{B}(\mathsf{Y}),
	\]
	we say that $K$ is \emph{lumping} with respect to $f$.
	This terminology arises from the fact that if $(X_n)_{n\in \mathbb{N}}$ is a Markov chain with kernel $K$ and initial distribution $\mu \in \mathcal{M}_{\mathsf{X}}$,
	then the stochastic process $\big(f(X_n)\big)_{n \in \mathbb{N}}$,
	which ``lumps'' the states in $\mathsf{X}$ that have the same image under $f$,
	is again a Markov chain with kernel
	\[
		Q: \mathsf{Y} \times \mathcal{B}(\mathsf{Y}) \to  [0,1], \qquad (t, \mathsf{A}) \mapsto K\big(x, f^{-1}(\mathsf{A})\big) \text{ for } x \in \mathsf{X} \text{ such that } f(x)= t
	\]
	and initial distribution $f(\mu)$.
	We call $Q$ the \emph{lumped kernel}.
	This follows by
	inductively extending the identity $Q\big(f(x), \mathsf{A}\big) = K\big(x, f^{-1}(\mathsf{A})\big)$, $\ x \in \mathsf{X}, \mathsf{A} \in \mathcal{B}(\mathsf{Y})$ 
	to
	\begin{multline}\label{Eq: Extended relationship lumped and normal kernel}
		\int_{\mathsf{X}^n} h\big(f(x_1), \ldots, f(x_n)\big) \ K^{\otimes n}\big(x, \d (x_1, \ldots, x_n)\big)\\
		= \int_{\mathsf{Y}^n} h(y_1, \ldots, y_n)\ Q^{\otimes n}\big(f(x), \d (y_1, \ldots, y_n)\big), \qquad \forall n \in \mathbb{N}, x \in \mathsf{X}, 
	\end{multline}
	for all measurable functions $h:\mathsf{Y}^n \to [0, \infty)$.
\end{remark}

The following lemma imposes assumptions that imply lumping.
De-initialization in this setting was already considered by \cite[Lemma 1, Proposition 1]{roberts2001markov}.
To make their extension to Wasserstein de-initialization more easily accessible, we repeat these results and a sketch of their proofs in our notation.
\begin{lemma}\label{L: extended lumping}
	Impose Assumption \ref{A: Pseudometrics}, and let $K: \mathsf{X} \times\mathcal{B}(\mathsf{X})\to [0,1]$ be a Markov kernel.
	Assume that there exists a measurable surjective function $f: \mathsf{X} \to \mathsf{Y}$ such that
	\[
		K(x_1, \mathsf{A} ) = K(x_2,\mathsf{A}) \quad \forall\ x_1,x_2 \in \mathsf{X} \text{ with } f(x_1)= f(x_2), \forall \ \mathsf{A} \in \mathcal{B}(\mathsf{X}).
	\]
	Then $K$ is in particular lumping with respect to $f$, see Remark \ref{R: Lumping}. Denote by $Q$ the corresponding lumped kernel,
	and define the Markov kernel
	\[
		R: \mathsf{Y} \times \mathcal{B}(\mathsf{X}) \to [0,1], \qquad (t, \mathsf{B}) 
		\mapsto K\big(x, \mathsf{B}\big) \text{ for } x \in \mathsf{X} \text{ such that } f(x)= t.
	\]
	\begin{enumerate}[label=\alph*)]
		\item The mapping given by $\Pi(\mu, n) := f(\mu)Q^{n-1}$ for $\mu \in \mathcal{M}_{\mathsf{X}}, n \in \mathbb{N}$ is de-initializing for $K$.
			If moreover there exists $C_1 \in [0, \infty)$ such that
			\begin{equation}\label{Eq: condition for upper Wasserstein de-initialization in lumping case}
				W_{\mathrm{d}_{\mathsf{X}}}^{(q)} \big(R(y_1, \cdot), R(y_2, \cdot)\big) \leq C_1 \cdot\mathrm{d}_{\mathsf{Y}}(y_1, y_2)
				\qquad \forall \ y_1,y_2 \in \mathsf{Y},
			\end{equation}
			then $\Pi$ is $q$-Wasserstein de-initializing for $K$ with constant $C_1$.
		\item The Markov kernel $Q$ is de-initialized by the mapping given by $\Pi(\mu, n):= \mu RK^{n-1}$ 
			for $\mu \in \mathcal{M}_{\mathsf{Y}}, n \in \mathbb{N}$. 
			If moreover $f$ is $C_2$-Lipschitz for some $C_2 \in [0, \infty)$, 
			then $\Pi$ is $q$-Wasserstein de-initializing for $Q$ with constant $C_2$.
	\end{enumerate}
\end{lemma}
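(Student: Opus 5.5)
Both parts come down to exhibiting a Construction \ref{A: De-initializing construction} on the canonical path space, identifying the conditional kernels $D_n$, and then checking \eqref{Eq: Wasserstein de-initializing property} directly. Throughout we use the hypothesis that $K(x,\cdot)$ depends on $x$ only through $f(x)$, i.e.\ $K(x,\cdot)=R(f(x),\cdot)$. First, lumping follows immediately by taking $\mathsf{A}=f^{-1}(\mathsf{B})$, and this also shows $Q(t,\cdot)=f\big(R(t,\cdot)\big)$. Measurability of $R$ (and of $Q$) needs a measurable section of $f$; alternatively one can work with the factorization through $f$ given by Doob--Dynkin. I will treat this as a routine technical point.

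For a), take $\mathbb{P}_\mu$ to be the law of the Markov chain $(X_n)_{n\ge0}$ with kernel $K$ and initial distribution $\mu$, and set $Y_n:=f(X_{n-1})$ for $n\in\mathbb{N}$. Since $(f(X_n))$ is a Markov chain with kernel $Q$ and initial law $f(\mu)$, we get $\int\mathbb{P}_x^{Y_n}\,\mu(\d x)=f(\mu)Q^{n-1}=\Pi(\mu,n)$ by Remark \ref{R: Lumping}, and $x\mapsto\mathbb{P}^{Y_n}_x=Q^{n-1}(f(x),\cdot)$ is a kernel. By the Markov property, the conditional law of $X_n$ given $X_{n-1}$ is $K(X_{n-1},\cdot)=R(f(X_{n-1}),\cdot)$. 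This depends on $X_{n-1}$ only through $Y_n$, so by the tower property $\mathbb{P}_x^{X_n\mid Y_n=y}=R(y,\cdot)$ for $\mathbb{P}_x^{Y_n}$-a.e.\ $y$. Concretely, for measurable $g$ and $\mathsf{A}$ we have $\mathbb{E}_x[g(Y_n)\mathbbm 1_\mathsf{A}(X_n)]=\mathbb{E}_x[g(f(X_{n-1}))R(f(X_{n-1}),\mathsf{A})]$. Hence $D_n:=R$ for every $n$, and \eqref{Eq: condition for upper Wasserstein de-initialization in lumping case} is exactly \eqref{Eq: Wasserstein de-initializing property} with $C=C_1$.

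For b), start the $Q$-chain $(T_n)_{n\ge0}$ from $\mu\in\mathcal{M}_{\mathsf{Y}}$ and build a joint process on one probability space. Draw $T_0\sim\mu$ and $Z_1\sim R(T_0,\cdot)$, and run $Z_{n+1}\sim K(Z_n,\cdot)$; then set $T_n:=f(Z_n)$ for $n\ge1$ and $Y_n:=Z_n$. Under this construction $T_{n}=f(Z_n)$ and $Z_{n+1}\sim R(f(Z_n),\cdot)$, so $(T_n)$ is Markov with kernel $Q(t,\cdot)=f(R(t,\cdot))$, by the same argument as in Remark \ref{R: Lumping}. Moreover $\mathbb{P}_t^{Y_n}=R K^{n-1}(t,\cdot)$, which integrates to $\Pi(\mu,n)=\mu RK^{n-1}$. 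Since $T_n$ is a deterministic function of $Y_n$, we have $\mathbb{P}_t^{T_n\mid Y_n=z}=\delta_{f(z)}$, i.e.\ $D_n(z,\cdot)=\delta_{f(z)}$. The only coupling of two Dirac measures is their product, which gives
\[
W^{(q)}_{\mathrm{d}_{\mathsf Y}}\big(\delta_{f(z_1)},\delta_{f(z_2)}\big)=\mathrm{d}_{\mathsf Y}\big(f(z_1),f(z_2)\big)\le C_2\,\mathrm{d}_{\mathsf X}(z_1,z_2).
\]
In b) the roles of the spaces are swapped, so this is \eqref{Eq: Wasserstein de-initializing property} with constant $C_2$.

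I expect the main obstacle to be the measure-theoretic bookkeeping rather than any estimate. Specifically, one has to verify that the joint path-space construction really yields a Markov chain with the prescribed kernel under every $\mathbb{P}_\mu$, with $x\mapsto\mathbb{P}^{Y_n}_x$ measurable. One also has to identify the conditional distributions rigorously; in a) this requires the Markov property with respect to the filtration generated by $(X_k)$, and I will check it by the displayed integral identity with test functions $g$.
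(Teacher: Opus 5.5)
Your proposal is correct and follows the paper's proof essentially step for step. In a) you use the canonical $K$-chain with $Y_n=f(X_{n-1})$ and $D_n=R$; in b) you use the path-space construction $T_0\sim\mu$, $Z_1\sim R(T_0,\cdot)$, $Z_{n+1}\sim K(Z_n,\cdot)$, $T_n=f(Z_n)$, with $D_n=\delta_{f(\cdot)}$ and the Dirac-coupling identity, exactly as the paper does. On your measurability aside: a Borel section of $f$ need not exist, but you do not need one. The map $x\mapsto K(x,\mathsf{B})$ is Borel and constant on the fibres of the surjection $f$, so its preimages are saturated Borel sets whose images under $f$ are both analytic and co-analytic, and hence Borel by Suslin's theorem. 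This makes $R(\cdot,\mathsf{B})$ Borel.
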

\begin{proof}
	\textit{Ad a):}
	We describe a suitable Construction \ref{A: De-initializing construction}.
	Let $(X_n)_{n \in \mathbb{N}_0}$ be the canonical Markov chain with transition kernel $K$ on $(\mathsf{X}^{\mathbb{N}_0}, \mathcal{B}(\mathsf{X}^{\mathbb{N}_0}))$ equipped with the corresponding family of probability measures $(\mathbb{P}_\mu)_{\mu \in \mathcal{M}_{\mathsf{X}}}$ (see e.g.\ \cite[Chapter 3.1]{douc2018markov}).
	For all $n \in \mathbb{N}$ set $Y_n = f(X_{n-1})$.
	From Remark \ref{R: Lumping} we know that under $\mathbb{P}_x$ the stochastic process $(Y_n)_{n \in \mathbb{N}}$ is a Markov chain with kernel $Q$ and initial distribution $\delta_{f(x)}$ for all $x \in \mathsf{X}$,
	and we are in the setting of Construction \ref{A: De-initializing construction} with $\Pi(\mu, n) = f(\mu)Q^{n-1}$ for $\mu \in \mathcal{M}_{\mathsf{X}}, n \in \mathbb{N}$.
	
	For all $n \in \mathbb{N}$ and all $x \in \mathsf{X}$, 
	we have $\mathbb{P}_x^{X_n \mid Y_n = y} = R(y, \cdot)$ for $\mathbb{P}_x^{Y_n}$-almost all $y \in \mathsf{Y}$ by virtue of
	\begin{align*}
		&\mathbb{E}_x\big(\mathbbm{1}_{\mathsf{A}}(Y_n) \mathbbm{1}_{\mathsf{B}}(X_n)\big)
		= \mathbb{E}_x\Big[ \mathbb{E}_x \Big(\mathbbm{1}_{\mathsf{A}}\big(f(X_{n-1})\big) \mathbbm{1}_{\mathsf{B}}(X_n) \mid X_{n-1}\Big)\Big]\\
		&\qquad = \mathbb{E}_x\Big[ \mathbbm{1}_{\mathsf{A}}\big(f(X_{n-1})\big) K(X_{n-1}, \mathsf{B})\Big]
		= \mathbb{E}_x\big[ \mathbbm{1}_{\mathsf{A}}(Y_n) R(Y_n, \mathsf{B})\big], \qquad \forall \mathsf{A} \in \mathcal{B}(\mathsf{Y}), \mathsf{B} \in \mathcal{B}(\mathsf{X}),
	\end{align*}
	where we also used the assumption $R\big(f(z), \cdot\big) = K(z, \cdot)$ for $z \in \mathsf{X}$.
	In other words, Definition \ref{D: De-initialization} is satisfied with $D_n = R$ for all $n \in \mathbb{N}$.
	Consequently, if \eqref{Eq: condition for upper Wasserstein de-initialization in lumping case} holds, 
	then also Definition \ref{D: Wasserstein de-initialization} is satisfied.

	\textit{Ad b):}
	We describe a suitable Construction \ref{A: De-initializing construction} with interchanged roles of $\mathsf{X}$ and $\mathsf{Y}$.
	By Kolmogorov's extension theorem for all $\mu \in \mathcal{M}_{\mathsf{Y}}$ there exists a probability measure $\mathbb{P}_\mu$ on $(\mathsf{Y}^{\mathbb{N}_0}\times \mathsf{X}^{\mathbb{N}}, \mathcal{B}(\mathsf{Y}^{\mathbb{N}_0}\times\mathsf{X}^{ \mathbb{N}}))$
	with finite dimensional distributions 
	\begin{align*}
		\eta_{\mathsf{J}_1, \mathsf{J}_2}(\mathsf{A}):= \begin{dcases}
			\begin{multlined}[b]
				\int_{\mathsf{Y}}\int_{\mathsf{X}}\int_{\mathsf{X}^{(j_m \vee k_\ell)-1}}
				\mathbbm{1}_{\mathsf{A}}\big(y_0,f(x_{j_2}), \ldots, f(x_{j_m}), x_{k_1}, \ldots, x_{k_\ell}\big)\\
				\times K^{\otimes (j_m \vee k_\ell)-1}\big(x_1, \d(x_2, \ldots, x_{j_m \vee k_\ell})\big)\, R(y_0, \d x_1)\, \mu(\d y_0),
			\end{multlined} & j_1 = 0,\\
			\begin{multlined}[b]
				\int_{\mathsf{Y}}\int_{\mathsf{X}}\int_{\mathsf{X}^{(j_m \vee k_\ell)-1}}
				\mathbbm{1}_{\mathsf{A}}\big(f(x_{j_1}), \ldots, f(x_{j_m}), x_{k_1}, \ldots, x_{k_\ell}\big)\\
				\times K^{\otimes (j_m \vee k_\ell)-1}\big(x_1, \d(x_2, \ldots, x_{j_m \vee k_\ell})\big)\, R(y_0, \d x_1)\, \mu(\d y_0),
			\end{multlined} & j_1 \neq 0,
		\end{dcases}
	\end{align*}
	for $\mathsf{A} \in \mathcal{B}(\mathsf{Y}^{m} \times\mathsf{X}^{\ell})$ and  $\mathsf{J}_1:=\{j_1, \ldots, j_m\} \subseteq \mathbb{N}_0, \mathsf{J}_2 := \{k_1, \ldots, k_\ell\}\subseteq \mathbb{N}$ finite and assumed to be denoted in increasing order.
	For $n \in \mathbb{N}_0$ let $Y_n : \mathsf{Y}^{\mathbb{N}_0}\times \mathsf{X}^{\mathbb{N}} \to \mathsf{Y}$ be the projection onto the $n$th coordinate of $\mathsf{Y}^{\mathbb{N}_0}$,
	and for $n \in \mathbb{N}$ let  $X_n : \mathsf{Y}^{\mathbb{N}_0}\times \mathsf{X}^{\mathbb{N}} \to \mathsf{X}$ be the projection onto the $n$th coordinate of $\mathsf{X}^{\mathbb{N}}$.
	Using \eqref{Eq: Extended relationship lumped and normal kernel} and $R\big(y, f^{-1}(\cdot)\big) = Q(y, \cdot)$ for $y \in \mathsf{Y}$,
	we get $\mathbb{P}_\mu^{(Y_0, \ldots Y_n)} = \mu \otimes Q^{\otimes n}$ for all $\mu \in \mathcal{M}_{\mathsf{Y}}$, $n \in \mathbb{N}_0$.
	Moreover, $\mathbb{P}_x^{X_n} = RK^{n-1}(x, \cdot)$ for all $x \in \mathsf{X}$, $n \in \mathbb{N}$,
	and we are in the setting of Construction \ref{A: De-initializing construction} with reversed roles of $\mathsf{X}$ and $\mathsf{Y}$.
	
	For $n \in \mathbb{N}$ and $y \in \mathsf{Y}$, we have $\mathbb{P}_y^{Y_n \mid X_n = x} = \delta_{f(x)}$ for $\mathbb{P}_y^{X_n}$-almost all $x \in \mathsf{X}$, by virtue of
	\begin{align*}
		\mathbb{E}_y\big(\mathbbm{1}_{\mathsf{B}}(X_n)\mathbbm{1}_{\mathsf{A}}(Y_n)\big)
		&= \int_{\mathsf{X}}\int_{\mathsf{X}^{n-1}}
			\mathbbm{1}_{\mathsf{A}\times \mathsf{B}}\big(f(x_n), x_n\big)
			K^{n-1}\big(x_1, \d(x_2, \ldots, x_n)\big)\, R(y, \d x_1)\\
		&= \mathbb{E}_y\Big(\mathbbm{1}_{\mathsf{A} }\big(f(X_n)\big)\mathbbm{1}_{\mathsf{B}}(X_n)\Big),
		\qquad \mathsf{A} \in \mathcal{B}(\mathsf{Y}), \mathsf{B} \in \mathcal{B}( \mathsf{X}).
	\end{align*}
	Hence, Definition \ref{D: De-initialization} is satisfied with $D_n = \delta_{f(\cdot)}$.
	
	Since the only coupling between two Dirac measures is their product measure,
	we have
	\begin{equation}\label{Eq: wasserstein distance of Dirac measures}
		W_{\mathrm{d}_{\mathsf{Y}}}^{(q)}\big(\delta_{f(x_1)}, \delta_{f(x_2)}\big) = \mathrm{d}_{\mathsf{Y}}\big(f(x_1), f(x_2)\big),
		\qquad x_1,x_2 \in \mathsf{X}.
	\end{equation}
	Consequently, if $f$ is Lipschitz, then also Definition \ref{D: Wasserstein de-initialization} is satisfied.
\end{proof}

\subsection{Wasserstein de-initialization through splitting}

We turn to the second general setting in which we consider de-initialization.

\begin{lemma}\label{L: De-initialization via splitting}
	In the setting of Assumption \ref{A: Pseudometrics}, let $K: \mathsf{X} \times \mathcal{B}(\mathsf{X}) \to [0,1]$ be a Markov kernel.
	Assume that there exists two Markov kernels $H_1: \mathsf{X} \times \mathcal{B}(\mathsf{Y}) \to [0,1]$ and $H_2: \mathsf{Y} \times \mathcal{B}(\mathsf{X}) \to [0,1]$ such that $K=H_1H_2$. Set $\tilde{K}:= H_2H_1$.
	\begin{enumerate}[label=\alph*)]
		\item The Markov kernel $K$ is de-initialized by the mapping given by $\Pi(\mu,n) = \mu H_1\tilde{K}^{n-1}$ 
			for $\mu \in \mathcal{M}_{\mathsf{X}}$, $n \in \mathbb{N}$.
			If there exists $C_1 \in[0, \infty)$ such that
			\begin{equation}\label{Eq: upper Wasserstein-deinitialization condition splitting case}
				W_{\mathrm{d}_{\mathsf{X}}}^{(q)}\big(H_2(y_1,\cdot), H_2(y_2, \cdot)\big) \leq C_1 \cdot \mathrm{d}_{\mathsf{Y}}(y_1,y_2), \qquad \forall y_1,y_1 \in \mathsf{Y},
			\end{equation}
			then $\Pi$ is $q$-Wasserstein de-initializing for $K$ with constant $C_1$.
		\item If there exists a measurable function $f: \mathsf{X} \to \mathsf{Y}$ such that for all $y \in \mathsf{Y}$ holds
			\begin{equation}\label{Eq: Fiber property}
				f(x) = y \qquad \text{for } H_2(y, \cdot)\text{-almost all } x \in \mathsf{X},
			\end{equation}
			then the mapping given by $\Pi(\mu, n) = \mu H_2 K^n$ for $\mu \in \mathcal{M}_{\mathsf{Y}}$, $n \in \mathbb{N}$ is de-initializing for $\tilde{K}$.
			If in addition $f$ is $C_2$-Lipschitz for some $C_2 \in [0, \infty)$,
			then $\Pi$ is $q$-Wasserstein de-initializing for $\tilde{K}$ with constant $C_2$.
	\end{enumerate} 
\end{lemma}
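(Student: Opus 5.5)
For both parts I will mimic the proof of Lemma \ref{L: extended lumping}. I will write down an explicit Construction \ref{A: De-initializing construction} on a canonical path space and then identify the conditional distribution of the ``$\mathsf{X}$''-variable given the ``$\mathsf{Y}$''-variable. Once $D_n$ is identified, the Lipschitz condition in Definition \ref{D: Wasserstein de-initialization} reduces directly to the stated hypothesis.

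\emph{Ad a):} By Kolmogorov's extension theorem I build, for every $\mu\in\mathcal{M}_{\mathsf{X}}$, a probability measure $\mathbb{P}_\mu$ on $\mathsf{X}^{\mathbb{N}_0}\times\mathsf{Y}^{\mathbb{N}}$. It describes the interlaced chain $X_0\sim\mu$, $Y_{n}\sim H_1(X_{n-1},\cdot)$, $X_n\sim H_2(Y_n,\cdot)$. Its finite dimensional distributions are the obvious iterated integrals $\mu\otimes H_1\otimes H_2\otimes H_1\otimes\cdots$, rearranged by coordinate.

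Since $K=H_1H_2$, integrating out the $Y$'s gives $\mathbb{P}_\mu^{(X_0,\dots,X_n)}=\mu\otimes K^{\otimes n}$. Integrating out the $X$'s gives $\mathbb{P}_\mu^{Y_n}=\mu H_1(H_2H_1)^{n-1}=\mu H_1\tilde K^{n-1}$, which matches $\Pi(\mu,n)$. The map $x\mapsto\mathbb{P}_x^{Y_n}=H_1\tilde K^{n-1}(x,\cdot)$ is a kernel, so \eqref{Eq: De-initializing process identity} holds.

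For the conditional distribution, fix $\mathsf{A}\in\mathcal{B}(\mathsf{Y})$ and $\mathsf{B}\in\mathcal{B}(\mathsf{X})$. Conditioning on $(X_0,Y_1,\dots,X_{n-1},Y_n)$ and using the Markov structure of the construction, I get $\mathbb{E}_x[\mathbbm{1}_{\mathsf{A}}(Y_n)\mathbbm{1}_{\mathsf{B}}(X_n)]=\mathbb{E}_x[\mathbbm{1}_{\mathsf{A}}(Y_n)H_2(Y_n,\mathsf{B})]$. Hence $D_n=H_2$ for all $n$, and \eqref{Eq: upper Wasserstein-deinitialization condition splitting case} is exactly \eqref{Eq: Wasserstein de-initializing property} with $C=C_1$.

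\emph{Ad b):} Here the roles are swapped. For $\mu\in\mathcal{M}_{\mathsf{Y}}$ I use the same interlaced construction, but started from $Y_0\sim\mu$: $X_1\sim H_2(Y_0,\cdot)$, then $Y_1\sim H_1(X_1,\cdot)$, then $X_2\sim H_2(Y_1,\cdot)$, and so on. Then $(Y_n)_{n\in\mathbb{N}_0}$ is a Markov chain with kernel $H_2H_1=\tilde K$. To match $\Pi(\mu,n)=\mu H_2K^n$, the de-initializing variable must be the $\mathsf{X}$-variable sampled after $Y_n$. I therefore index it as $X_{n+1}\sim H_2(Y_n,\cdot)$, whose law is $\mu\tilde K^nH_2=\mu H_2(H_1H_2)^n=\mu H_2K^n$.

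It remains to compute the conditional law of $Y_n$ given this $X$-variable. The key step is the fiber property \eqref{Eq: Fiber property}. Since $X_{n+1}\sim H_2(Y_n,\cdot)$ conditionally on $Y_n$, it gives $f(X_{n+1})=Y_n$ almost surely. Thus $\mathbb{E}_y[\mathbbm{1}_{\mathsf{A}}(Y_n)\mathbbm{1}_{\mathsf{B}}(X_{n+1})]=\mathbb{E}_y[\mathbbm{1}_{\mathsf{A}}(f(X_{n+1}))\mathbbm{1}_{\mathsf{B}}(X_{n+1})]$, so $D_n=\delta_{f(\cdot)}$. By \eqref{Eq: wasserstein distance of Dirac measures}, if $f$ is $C_2$-Lipschitz then \eqref{Eq: Wasserstein de-initializing property} holds with $C=C_2$.

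The main obstacle is bookkeeping rather than mathematics. One difficulty is writing consistent finite dimensional distributions for Kolmogorov's theorem, which requires care with indices. The other is checking that the almost sure identity $f(X_{n+1})=Y_n$ follows from \eqref{Eq: Fiber property} for every initial $y$. That identity holds because the set $\{(y',x):f(x)\neq y'\}$ is measurable and has zero mass under $\mathbb{P}_y^{Y_n}\otimes H_2$.
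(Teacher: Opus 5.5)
Your proposal is correct and follows essentially the same route as the paper. Both use the interlaced Kolmogorov construction, identify $D_n = H_2$ in a) and $D_n = \delta_{f(\cdot)}$ in b), and reduce the Lipschitz requirement to the stated hypotheses via the Dirac-measure identity. The only difference is cosmetic and lies in b): the paper defines $Y_n := f(X_n)$ surely and spends the fiber property on showing that $(Y_n)$ is a $\tilde K$-chain. You keep the interlaced $Y_n$, so the $\tilde K$-Markov property is immediate, and spend the fiber property on the almost sure identity $Y_n = f(X_{n+1})$.
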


\begin{proof}
	\textit{Ad a):}
	First, we provide a suitable Construction \ref{A: De-initializing construction}.
	For $\mu \in \mathcal{M}_{\mathsf{X}}$ let $\mathbb{P}_\mu$ be the probability measure on 
	$\big(\mathsf{X}^{\mathbb{N}_0}\times \mathsf{Y}^{\mathbb{N}}, \mathcal{B}(\mathsf{X}^{\mathbb{N}_0}\times\mathsf{Y}^{\mathbb{N}})\big)$ that has finite dimensional distributions
	\begin{multline*}
		\eta_{\mathsf{J}_1, \mathsf{J}_2}(\mathsf{A}) := \int_{\mathsf{X}} \int_{(\mathsf{X} \times \mathsf{Y})^{j_m \vee k_\ell}}
		\mathbbm{1}_{\mathsf{A}}(x_{j_1}, \ldots, x_{j_m}, y_{k_1}, \ldots, y_{k_\ell})\\
		 \times (H_1 \otimes H_2)^{\otimes(j_m\vee k_\ell)}\big(x_0, \d (y_1,x_1, \ldots, y_{j_m\vee k_\ell}, x_{j_m\vee k_\ell})\big) \, \mu(\d x_0),
	\end{multline*}
	for $\mathsf{A} \in \mathcal{B}(\mathsf{X}^{ m} \times \mathsf{Y}^{\ell})$, and $\mathsf{J}_1:=\{j_1, \ldots, j_m\}\subseteq \mathbb{N}_0, \mathsf{J}_2:= \{k_1, \ldots, k_\ell\}\subseteq \mathbb{N}$ finite and with elements denoted in increasing order.
	Such a probability measure exists by the Kolmogorov extension theorem.
	For $n \in \mathbb{N}_0$ let $X_n: \mathsf{X}^{\mathbb{N}_0}\times \mathsf{Y}^{\mathbb{N}} \to \mathsf{X}$ be the projection onto the $n$th component of $\mathbb{X}^{\mathbb{N}_0}$.
	Moreover, for $n \in \mathbb{N}$ let $Y_n: \mathsf{X}^{\mathbb{N}_0}\times \mathsf{Y}^{\mathbb{N}} \to \mathsf{Y}$ be the projection onto the $n$th component of $\mathsf{Y}^{\mathbb{N}}$.
	By construction $(X_n)_{n\in \mathbb{N}_0}$ is a Markov chain with transition kernel $K$ and initial distribution $\mu$ under $\mathbb{P}_\mu$.
	Moreover, for all $x \in \mathsf{X}$ and all $n \in \mathbb{N}$ holds $\mathbb{P}_x^{Y_n} = H_1\tilde{K}^{n-1}(x, \cdot)$,
	and we are in the setting of Construction \ref{A: De-initializing construction} with $\Pi$ as specified in the statement.
	
	By virtue of 
	\begin{align*}
		&\mathbb{E}_x\big(\mathbbm{1}_{\mathsf{A}}(X_n) \mathbbm{1}_{\mathsf{B}}(Y_n)\big)
		= (H_1 \otimes H_2)^n(x, \mathsf{B} \times \mathsf{A})\\
		&\qquad =  \int_{\mathsf{Y}} \int_{\mathsf{X}}
			\mathbbm{1}_{\mathsf{A}}(x_n) \mathbbm{1}_{\mathsf{B}}(y_n)\ H_2(y_n, \d x_n)\, (\delta_x H_1)(H_2H_1)^{n-1}(\d y_n)
		= \mathbb{E}_x\big(\mathbbm{1}_{\mathsf{B}}(Y_n) H_2(Y_n, \mathsf{A})\big),
	\end{align*} 
	for $\mathsf{A} \in \mathcal{B}(\mathsf{X})$ and $\mathsf{B} \in \mathcal{B}(\mathsf{Y})$,
	we have for all $x \in \mathsf{X}, n \in \mathbb{N}$ that $\mathbb{P}_x^{X_n \mid Y_n = y} = H_2(y, \cdot)$ for $\mathbb{P}_x^{Y_n}$-almost all $y \in \mathsf{Y}$. 
	Therefore Definition \ref{D: De-initialization} is satisfied with $D_n = H_2$.
	Consequently, if \eqref{Eq: upper Wasserstein-deinitialization condition splitting case} holds, then also Definition \ref{D: Wasserstein de-initialization} is satisfied.

	\textit{Ad b):} 
	First, we provide a suitable Construction \ref{A: De-initializing construction} with reversed roles of $\mathsf{X}$ and $\mathsf{Y}$.
	For $\mu \in \mathcal{M}_{\mathsf{Y}}$ let $\mathbb{P}_\mu$ be the probability measure on $(\mathsf{Y}^{\mathbb{N}_0}\times \mathsf{X}^{\mathbb{N}}, \mathcal{B}(\mathsf{Y}^{\mathbb{N}_0}\times \mathsf{X}^{\mathbb{N}}))$ with finite dimensional distributions
	\[
	\eta_{\mathsf{J}_1,\mathsf{J}_2}(\mathsf{A}) := \begin{dcases}
		\begin{multlined}[b]
			\int_{\mathsf{Y}}\int_{\mathsf{X}} \int_{\mathsf{X}^{j_m \vee k_\ell}} \mathbbm{1}_{\mathsf{A}}\big(y_0, f(x_{j_2}), \ldots, f(x_{j_m}), x_{k_1}, \ldots, x_{k_\ell}\big)\\
			\times	K^{\otimes (j_m \vee k_\ell)}\big(x_0, \d (x_1, \ldots,x_{j_m \vee k_\ell})\big)\,H_2(y_0, \d x_0)\, \mu(\d y_0),
		\end{multlined} & j_1 = 0,\\
		\begin{multlined}[b]
			\int_{\mathsf{Y}} \int_{\mathsf{X}} \int_{\mathsf{X}^{j_m \vee k_\ell}} \mathbbm{1}_{\mathsf{A}}\big(f(x_{j_1}), \ldots, f(x_{j_m}), x_{k_1}, \ldots, x_{k_\ell}\big)\\
			\times	K^{\otimes (j_m \vee k_\ell)}\big(x_0, \d (x_1, \ldots,x_{j_m \vee k_\ell})\big)\,H_2(y_0, \d x_0)\, \mu(\d y_0),
		\end{multlined}  & j_1 \neq 0,
	\end{dcases}
	\]
	for $\mathsf{A} \in \mathcal{B}(\mathsf{X}^{m} \times \mathsf{Y}^{\ell})$, 
	where $\mathsf{J}_1 := \{j_1, \ldots, j_m\} \subseteq \mathbb{N}_0, \mathsf{J}_2:= \{k_1, \ldots, k_\ell\} \subseteq \mathbb{N}$ are finite and their elements are written in increasing order.
	We get the existence of such probability measures from Kolmogorov's extension theorem.
	For $n \in \mathbb{N}_0$ let $Y_n: \mathsf{Y}^{\mathbb{N}_0}\times \mathsf{X}^{\mathbb{N}} \to \mathsf{Y}$ be the projection onto the $n$th component of $\mathsf{Y}^{\mathbb{N}_0}$.
	Moreover, for $n \in \mathbb{N}$ let $X_n: \mathsf{Y}^{\mathbb{N}_0}\times \mathsf{X}^{\mathbb{N}} \to \mathsf{X}$ be the projection onto the $n$th component of $\mathsf{X}^{\mathbb{N}}$.
	
	Observe that for $\mu \in \mathcal{M}_{\mathsf{Y}}$, $n \in \mathbb{N}$, we have
	\begin{align*}
		&\mathbb{P}_\mu^{(Y_0, \ldots, Y_n)}(\mathsf{B})
		 \begin{multlined}[t]
			=\int_{\mathsf{Y}} \int_{\mathsf{X}} \int_{\mathsf{Y}} \int_{\mathsf{X}}  \ldots  \int_{\mathsf{Y}}  \int_{\mathsf{X}}
			\mathbbm{1}_{\mathsf{B}}\big(y_0, f(x_1), \ldots, f(x_n)\big)\\
			\times H_2(y_n, \d x_n)\, H_1(x_{n-1}, \d y_n) \ldots H_2(y_1, \d x_1)\, H_1(x_0, \d y_1)\, H_2(y_0, \d x_0)\, \mu(\d y_0)
		\end{multlined}\\
		&\qquad \overset{\eqref{Eq: Fiber property}}{=} \begin{multlined}[t]
			\int_{\mathsf{Y}} \int_{\mathsf{X}} \int_{\mathsf{Y}} \int_{\mathsf{X}}\ldots  \int_{\mathsf{Y}} \int_{\mathsf{X}}
			\mathbbm{1}_{\mathsf{B}}\big(y_0, y_1, \ldots, y_n\big)\\
			\times H_2(y_n, \d x_n)\, H_1(x_{n-1}, \d y_n) \ldots H_2(y_1, \d x_1)\, H_1(x_0, \d y_1)\, H_2(y_0, \d x_0)\, \mu(\d y_0)
		\end{multlined}\\
		& \qquad = \mu \otimes \tilde{K}^{\otimes n}(\mathsf{B}), \hspace*{0.55\textwidth} \mathsf{B} \in \mathcal{B}(\mathsf{Y}^{n+1}).
	\end{align*}
	Hence, $(Y_n)_{n \in \mathbb{N}_0}$ is a Markov chain with transition kernel $\tilde{K}$ and initial distribution $\mu$ under $\mathbb{P}_\mu$.
	Moreover, $\mathbb{P}_y^{X_n} = H_2K^n(y, \cdot)$ for all $y \in \mathsf{Y}, n \in \mathbb{N}$,
	and we are in the required setting of Construction \ref{A: De-initializing construction} with reversed roles of $\mathsf{X}$ and $\mathsf{Y}$.
	
	Since we have $\mathbb{P}_y^{X_n}$-almost surely $\mathbb{P}_y^{Y_n \mid X_n = \cdot} = \delta_{f(\cdot)}$ for all $y \in \mathsf{Y}, n \in \mathbb{N}$,
	Definition \ref{D: De-initialization} holds with $D_n = \delta_{f(\cdot)}$.
	Moreover, because of \eqref{Eq: wasserstein distance of Dirac measures}, the Lipschitzness of $f$ implies Definition \ref{D: Wasserstein de-initialization}.
\end{proof}

\section{Applications}\label{S: Applications}

In this section we demonstrate different usages of the Wasserstein de-initialization framework.
Firstly, it can be used to fully characterize the Wasserstein convergence behaviour of a Markov chain of interest, for example a MCMC method,
in terms of a simpler Markov chain.
In Sections \ref{S: Linchpin variable sampler} and \ref{S: Gibbs sampler}, we do this for the linchpin variable sampler and the Gibbs sampler, respectively.
Secondly, in Section \ref{S: Slice sampling} we establish Wasserstein ergodicity results for slice sampling.
To obtain these results we crucially exploit that, in contrast to pure de-initialization, 
within the Wasserstein de-initialization framework we can choose the metric on the state space $\mathsf{Y}$ of the de-initializing processes described by $\Pi$.
This leaves the freedom to choose a metric that is convenient to derive good convergence results for the de-initializing processes.
Throughout this section we rely on the Lemmas \ref{L: extended lumping} and \ref{L: De-initialization via splitting} to establish Wasserstein de-initialization.

\subsection{Linchpin variable sampling}\label{S: Linchpin variable sampler}

Suppose we want to generate (approximate) samples from a probability measure $\pi$ having density $p: \mathbb{R}^{d_1+d_2}\to [0, \infty)$ with respect to $d_1+d_2$-dimensional Lebesgue measure $\lambda_{d_1+d_2}$, where $d_1,d_2 \in \mathbb{N}$, in the following setting:
For each value $y \in \mathbb{R}^{d_2}$ of the second block of coordinates, 
the $\mathbb{R}^{d_1}$-valued first block of coordinates has conditional distribution
\begin{equation}\label{Eq: linchpin variable sampler - conditional distribution}
	\pi_{1\vert 2}(y, \d x) = \frac{p(x,y)}{p_L(y)}\ \lambda_{d_1}(\d x), \qquad  y \in \mathbb{R}^{d_2},
\end{equation}
where $p_L(y) := \int_{\mathbb{R}^{d_1}} p(x,y) \, \lambda_{d_1}(\d x)$, $y \in \mathbb{R}^{d_2}$, 
is the density of the marginal distribution $\pi_L$ with respect to $\lambda_{d_2}$ of the second block of coordinates.
By virtue of Tonelli's theorem, we may interpret $\pi_{1 \vert 2}:\mathbb{R}^{d_2} \times \mathcal{B}(\mathbb{R}^{d_1}) \to [0,1]$ as a Markov kernel.
We assume that it is possible to sample directly from each conditional distribution $\pi_{1\vert 2}(y, \cdot)$.
An example for a distribution fitting these assumptions is the Rosenbrock distribution (see Rosenbrock example in the introduction).
If we could additionally sample directly from the marginal distribution $\pi_L$ of the second block of coordinates, which we also call the linchpin variable,
we could assemble the building blocks conditional and marginal distribution to sample directly from our original target distribution $\pi$.
However, instead we only assume to have access to a Markov kernel $L: \mathbb{R}^{d_2}\times \mathcal{B}(\mathbb{R}^{d_2})\to [0,1]$ with invariant distribution $\pi_L$.
Based on this, the \emph{linchpin variable sampler} is constructed by running a Markov chain with transition kernel $L$ on $\mathbb{R}^{d_2}$ 
and then extending each sample to a sample on the full space $\mathbb{R}^{d_1 + d_2}$ by generating the missing first block of coordinates from $\pi_{1\vert 2}$.
This is analogous to the construction of the linchpin variable sampler in the Rosenbrock example.
The corresponding Markov kernel is $\mathtt{LVS} := L \otimes \pi_{1\vert 2}$ on $\mathbb{R}^{d_1+d_2}$.
It has invariant distribution $\pi$, see \cite{vats2022understanding},
and is therefore in principle suited as MCMC method for targeting $\pi$.
See also Figure \ref{F: Linchpin variable sampler} for a graphical illustration of the transition principle.
For a more detailed overview of linchpin variable samplers we refer to \cite{acosta2014Markov,vats2022understanding}.

Within these references it is also observed that the total variation distance behaviour of the linchpin variable sampler is equal to the behaviour of the kernel $L$, 
more precisely, $d_{\mathrm{tv}}\big(L^n(y, \cdot), \pi_L\big)= d_{\mathrm{tv}} \big(\mathtt{LVS}^n((x,y), \cdot), \pi\big)$ for all $(x,y) \in \mathbb{R}^{d_1+d_2}$, $n \in \mathbb{N}$.
The following proposition generalizes this result to Wasserstein distances.

\begin{figure}
	\centering
	\begin{tikzpicture}[inner sep=2pt, outer sep=4pt]
		\coordinate (ynext) at (8.5,0);
		
		\draw (0,0) node[circle,draw] (y0) {$y_0$};
		\draw (0,-1.5) node[circle,draw] (x0) {$x_0$};
		\draw (2.5,0) node[circle,draw] (y1) {$y_1$};
		\draw (2.5,-1.5) node[circle,draw] (x1) {$x_1$};
		\draw (5,0) node[circle,draw] (y2) {$y_2$};
		\draw (5,-1.5) node[circle,draw] (x2) {$x_2$};
		\draw (7.5,0) node[circle,draw] (y3) {$y_3$};
		\draw (7.5,-1.5) node[circle,draw] (x3) {$x_3$};
		
		\path[->, very thick] (y0) edge node[above] {$L$}  (y1) ;
		\path[->, very thick] (y1) edge node[above] {$L$}(y2);
		\path[->, very thick] (y2) edge node[above] {$L$}(y3);
		\path[->,  thick, gray] (y3) edge (ynext);
		\path[->,very thick] (y1) edge node[right] {$\pi_{1 \vert 2}$}  (x1);
		\path[->, very thick] (y2) edge node[right] {$\pi_{1 \vert 2}$} (x2);
		\path[->, very thick] (y3) edge node[right] {$\pi_{1 \vert 2}$} (x3);
		
		\draw (ynext) node[right] {$\color{gray}\boldsymbol{\ldots}$};
	\end{tikzpicture}
	\caption{Transition principle of the linchpin variable sampler.}\label{F: Linchpin variable sampler}
\end{figure}

\begin{proposition}
	In the setting of this section,
	assume that there exists $q \in [1, \infty)$ and $C \in [0, \infty)$ such that
	\begin{equation}\label{Eq: linchpin variable sampler upper Wasserstein de-initialization condition}
		W_{\|\cdot\|_{d_1}}^{(q)}\big(\pi_{1\vert 2}(y_1, \cdot),\pi_{1\vert 2}(y_2 , \cdot)\big)\leq C\cdot \|y_1 - y_2\|_{d_2}, \qquad \forall y_1,y_2 \in \mathbb{R}^{d_2}.
	\end{equation}
	Then for the linchpin variable sampler $\mathtt{LVS}$ holds that for all $(x,y) \in \mathbb{R}^{d_1+d_2}$, $n \in \mathbb{N}$
	\[
		W_{\|\cdot\|_{d_2}}^{(q)}\big(L^n(y, \cdot), \pi_L\big)
		\leq W_{\|\cdot\|_{d_1+d_2}}^{(q)} \big(\mathtt{LVS}^n((x,y), \cdot), \pi\big)
		\leq (C+1) \cdot W_{\|\cdot\|_{d_2}}^{(q)}\big(L^n(y, \cdot), \pi_L\big).
	\]
	In fact, the first inequality holds always for any $q \in [1, \infty)$ without imposing \eqref{Eq: linchpin variable sampler upper Wasserstein de-initialization condition}.
\end{proposition}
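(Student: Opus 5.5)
Both inequalities will come from Lemma \ref{L: extended lumping}, applied with $f:\mathbb{R}^{d_1+d_2}\to\mathbb{R}^{d_2}$, $f(x,y)=y$. This projection is measurable, surjective and $1$-Lipschitz from $\|\cdot\|_{d_1+d_2}$ to $\|\cdot\|_{d_2}$.

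\textbf{Setting up the lemma.} The kernel $\mathtt{LVS}((x,y),\cdot)=\int \delta_{y'}\otimes\pi_{1\vert 2}(y',\cdot)\,L(y,\d y')$ depends on $(x,y)$ only through $y=f(x,y)$, so the hypothesis of Lemma \ref{L: extended lumping} holds. The objects in the lemma are then:
\begin{itemize}
	\item the lumped kernel $Q(y,\mathsf{A})=\mathtt{LVS}((x,y),\mathbb{R}^{d_1}\times\mathsf{A})=L(y,\mathsf{A})$, so $Q=L$;
	\item the kernel $R(y,\cdot)=\mathtt{LVS}((x,y),\cdot)$, i.e.\ $R=L\otimes\pi_{1\vert2}$ with $\pi_{1\vert2}$ placed in the first block.
\end{itemize}
I will also use the following facts:
\begin{itemize}
	\item $f(\pi)=\pi_L$;
	\item $\pi_L$ is $L$-invariant and $\pi$ is $\mathtt{LVS}$-invariant;
	\item $\pi_L R=\pi$, by $L$-invariance of $\pi_L$;
	\item $R\,\mathtt{LVS}^{n-1}=L\,R\,\mathtt{LVS}^{n-1}$-type identities are not needed; rather $\mathtt{LVS}^n((x,y),\cdot)=\delta_y R\,\mathtt{LVS}^{n-1}$.
\end{itemize}

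\textbf{Lower bound via part b).} Part b) of the lemma says that $Q=L$ is $q$-Wasserstein de-initialized by $\Pi(\mu,n)=\mu R\,\mathtt{LVS}^{n-1}$ with constant $1$. Theorem \ref{Thm: Wasserstein de-initialization} applied with $\mu=\delta_y$ and $\nu=\pi_L$ gives
\[
W^{(q)}_{\|\cdot\|_{d_2}}\big(L^n(y,\cdot),\pi_L L^n\big)\le W^{(q)}_{\|\cdot\|_{d_1+d_2}}\big(\delta_yR\,\mathtt{LVS}^{n-1},\pi_LR\,\mathtt{LVS}^{n-1}\big).
\]
The right-hand side is $W^{(q)}(\mathtt{LVS}^n((x,y),\cdot),\pi)$ and $\pi_L L^n=\pi_L$, which is the first inequality. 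No condition on $\pi_{1\vert2}$ was used, as claimed. An alternative route is to push the optimal coupling forward under $f\times f$, using $\|y_1-y_2\|\le\|(x_1,y_1)-(x_2,y_2)\|$.

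\textbf{Upper bound via part a).} Part a) requires the Lipschitz bound \eqref{Eq: condition for upper Wasserstein de-initialization in lumping case} for $R(y_1,\cdot)$ versus $R(y_2,\cdot)$ with constant $C+1$. This is the main technical step; I would prove it by a two-stage coupling.
\begin{enumerate}
	\item Take an optimal $W^{(q)}_{\|\cdot\|_{d_2}}$ coupling $\gamma$ of $L(y_1,\cdot)$ and $L(y_2,\cdot)$.
	\item Take a measurable kernel of optimal couplings $\Xi((y_1',y_2'),\cdot)$ of $\pi_{1\vert2}(y_1',\cdot)$ and $\pi_{1\vert2}(y_2',\cdot)$, which exists by \cite[Theorem 1.1]{zhang2000existence}.
\end{enumerate}
Combining the two yields a coupling of $R(y_1,\cdot)$ and $R(y_2,\cdot)$. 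Using \eqref{Eq: product metric ineqaulity} and Minkowski's inequality in $L^q(\gamma\otimes\Xi)$ gives
\[
W^{(q)}\big(R(y_1,\cdot),R(y_2,\cdot)\big)\le \Big(\int \|y_1'-y_2'\|^q\,\d\gamma\Big)^{1/q}+\Big(\int W^{(q)}\big(\pi_{1\vert2}(y_1',\cdot),\pi_{1\vert2}(y_2',\cdot)\big)^q\,\d\gamma\Big)^{1/q}.
\]
By \eqref{Eq: linchpin variable sampler upper Wasserstein de-initialization condition} this is at most $(1+C)\,W^{(q)}_{\|\cdot\|_{d_2}}(L(y_1,\cdot),L(y_2,\cdot))$.

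This gives Lipschitz constant $(1+C)$ in terms of $W(L(y_1,\cdot),L(y_2,\cdot))$, not $\|y_1-y_2\|$, so condition \eqref{Eq: condition for upper Wasserstein de-initialization in lumping case} is not directly available. To avoid assuming anything about $L$, I would instead use Lemma \ref{L: De-initialization via splitting} a) with the splitting $H_1=$ the projection kernel $\delta_{f(\cdot)}L$ (i.e.\ $(x,y)\mapsto L(y,\cdot)$) and $H_2(y,\cdot)=\delta_y\otimes\pi_{1\vert2}(y,\cdot)$, so that $\mathtt{LVS}=H_1H_2$ and $\tilde K=H_2H_1=L$. Then $\Pi(\mu,n)=\mu H_1L^{n-1}=f(\mu)L^n$. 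The coupling argument above, applied with Dirac masses in the first stage, gives
\[
W^{(q)}\big(H_2(y_1,\cdot),H_2(y_2,\cdot)\big)\le \|y_1-y_2\|+C\|y_1-y_2\|,
\]
which is exactly \eqref{Eq: upper Wasserstein-deinitialization condition splitting case} with $C_1=C+1$. Theorem \ref{Thm: Wasserstein de-initialization} with $\mu=\delta_{(x,y)}$ and $\nu=\pi$ then yields the upper bound, using $\Pi(\pi,n)=\pi_L L^n=\pi_L$.
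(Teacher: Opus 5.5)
Your proof is correct and takes essentially the paper's route. The upper bound is exactly the paper's argument via Lemma \ref{L: De-initialization via splitting}.a) with $H_1((x,y),\cdot)=L(y,\cdot)$ and $H_2(y,\cdot)=\delta_y\otimes\pi_{1\vert 2}(y,\cdot)$, with the constant $C+1$ coming from the tensor-product coupling of Lemma \ref{L: Tensor product Wasserstein distance}; for the lower bound you invoke Lemma \ref{L: extended lumping}.b) rather than Lemma \ref{L: De-initialization via splitting}.b), but since $R=H_2\,\mathtt{LVS}$ your map $\mu R\,\mathtt{LVS}^{n-1}=\mu H_2\,\mathtt{LVS}^{n}$ is the paper's $\Pi_{\mathtt{LVS}}$, and your pushforward under $f\times f$ is an equally valid shortcut.
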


\begin{proof}
	We apply Lemma \ref{L: De-initialization via splitting} with $\mathsf{X} = \mathbb{R}^{d_1+d_2}$, $\mathsf{Y} = \mathbb{R}^{d_2}$, $K = \mathtt{LVS}$,
	and $H_1((x,y), \mathsf{A}) = L(y, \mathsf{A})$ for $(x,y) \in \mathbb{R}^{d_1+ d_2}, \mathsf{A} \in \mathcal{B}(\mathbb{R}^{d_2})$ and
	\[
		H_2(y, \mathsf{B}) = \int_{\mathbb{R}^{d_1}} \mathbbm{1}_{\mathsf{B}}(\tilde{x}, y)\ \pi_{1 \vert 2}(y, \d \tilde{x}),
		\qquad y \in \mathbb{R}^{d_2}, \mathsf{B} \in \mathcal{B}(\mathbb{R}^{d_1+d_2}).
	\]
	Then $\tilde{K} = H_2 H_1 = L$,
	and the kernel $\mathtt{LVS}$ is de-initialized by the mapping given by $\Pi_L(\mu,n) = \mu_{\mathbb{R}^{d_2}} L^n$ for $\mu \in \mathcal{M}_{\mathbb{R}^{d_1+d_2}}$, $n \in \mathbb{N}$.
	
	If we apply Lemma \ref{L: Tensor product Wasserstein distance} with $\mathsf{A} = \mathsf{B} = \mathbb{R}^{d_2}$, $\mathsf{C}=\mathbb{R}^{d_1}$ equipped with their respective Euclidean metrics, $S(y,\cdot) = \delta_y$ for $y \in \mathbb{R}^{d_2}$, $T = \pi_{1 \vert 2}$, $K_1=1$ and $K_2 = C$ (from \eqref{Eq: linchpin variable sampler upper Wasserstein de-initialization condition}),
	we get 
	\[
		W_{\|\cdot\|_{d_1 + d_2}}^{(q)}\big(H_2(y_1, \cdot), H_2(y_2, \cdot)\big)\leq (C+1)\cdot  \|y_1 - y_2\|_{d_2}, \qquad \forall y_1,y_2 \in \mathbb{R}^{d_2}.
	\]
	Hence, $\Pi_L$ is also $q$-Wasserstein de-initializing for $\mathtt{LVS}$ with constant $C+1$.

	Choose the function $f$ in Lemma \ref{L: De-initialization via splitting} as the projection $f: \mathbb{R}^{d_1 + d_2} \to \mathbb{R}^{d_2}, (x,y) \mapsto y$.
	Then \eqref{Eq: Fiber property} holds by construction, 
	such that the mapping given by $\Pi_{\mathtt{LVS}}(\mu,n) = (\mu \otimes \pi_{1 \vert 2})\mathtt{LVS}^n$ for $\mu \in \mathcal{M}_{\mathbb{R}^{d_2}}$, $n \in \mathbb{N}$ is de-initializing for $\mathtt{L}$.
	Since $f$ is 1-Lipschitz continuous, $\Pi_{\mathtt{LVS}}$ is also $q$-Wasserstein de-initializing for $L$ with constant 1.
	
	Finally, the Wasserstein distance estimates follow from Theorem \ref{Thm: Wasserstein de-initialization} by virtue of $\pi_L \otimes \pi_{1 \vert 2} = \pi$, and the invariance of $\mathtt{LVS}$ and $L$ with respect to $\pi$ and $\pi_L$, respectively.  
\end{proof}

\subsection{Gibbs sampling}\label{S: Gibbs sampler}

Consider a Polish space $(\mathsf{E}, \mathcal{B}(\mathsf{E}))$ that is the product of two Polish spaces $\mathsf{E}_1$ and $\mathsf{E}_2$,
and a reference measure $\Lambda:= \Lambda_1 \otimes \Lambda_2$ on $(\mathsf{E}, \mathcal{B}(\mathsf{E}))$ that is the product measure of two $\sigma$-finite measures $\Lambda_1$ and $\Lambda_2$ on $\mathsf{E}_1$ and $\mathsf{E}_2$, respectively.
Say we are interested in generating samples from a distribution $\pi(\d(x,y)) := p(x,y)\Lambda(\d(x,y))$ on $\mathsf{E}$,  
and we are able to sample directly from the conditional distributions
\[
	\pi_{1 \vert 2}(y, \d \tilde{x}) = \frac{p(\tilde{x},y)}{p_2(y)}\Lambda_1(\d \tilde{x})
	\quad \text{and} \quad
	\pi_{2 \vert 1}(x, \d \tilde{y}) = \frac{p(x,\tilde{y})}{p_1(x)}\Lambda_2(\d \tilde{y})
	\quad \text{for all } x\in \mathsf{E}_1, y\in \mathsf{E}_2,
\]
where $p_1(x):= \int_{\mathsf{E}_2} p(x,\tilde{y})\Lambda_2(\d \tilde{y})$, $x \in \mathsf{E}_1$, is the density of the $\mathsf{E}_1$-valued marginal distribution $\pi_1(\d x):= p_1(x)\Lambda_1(\d x)$,  and $p_2(y):= \int_{\mathsf{E}_1} p(\tilde{x},y)\Lambda_1(\d \tilde{x})$, $y \in \mathsf{E}_2$, is the density of the $\mathsf{E}_2$-valued marginal distribution $\pi_2(\d y):= p_2(y)\Lambda_2(\d y)$ of $\pi$.
Note that by virtue of Tonelli's theorem  we may interpret $\pi_{2 \vert 1}: \mathsf{E}_1 \times \mathcal{B}(\mathsf{E}_2) \to [0,1]$ and $\pi_{1 \vert 2}: \mathsf{E}_2 \times \mathcal{B}(\mathsf{E}_1) \to [0,1]$ as Markov kernels.
In this setting, Gibbs sampling is a popular MCMC method to generate approximate samples from $\pi$, see e.g.\ \cite{gelman2014Bayesian, robert2004Monte}.
In this article, we focus on 2-stage deterministic scan Gibbs sampling, 
i.e.\ we alternately update the $\mathsf{E}_1$-valued and $\mathsf{E}_2$-valued block of coordinates by sampling according to $\pi_{1 \vert 2}$ and $\pi_{2 \vert 1}$, respectively.
We denote the resulting transition kernel as
\[
	\mathtt{Gibbs}\big((x,y), \mathsf{A}\big) :=  \int_{\mathsf{E}_1} \int_{\mathsf{E}_2} \mathbbm{1}_{\mathsf{A}}\big((\tilde{x}, \tilde{y})\big)\ \pi_{2 \vert 1}(\tilde{x}, \d \tilde{y})\, \pi_{1 \vert 2}(y, \d \tilde{x}), \qquad (x,y) \in \mathsf{E}, \mathsf{A} \in \mathcal{B}(\mathsf{E}).
\]
See also Figure \ref{F: Gibbs sampling} for an illustration of the transition mechanism of Gibbs sampling.

\begin{figure}
	\centering
	\begin{tikzpicture}[inner sep=2pt, outer sep=4pt]
		\draw (0,0) node[circle,draw] (x0) {$x_0$};
		\draw (0,-1.75) node[circle,draw] (y0) {$y_0$};
		\draw (2.5,0) node[circle,draw] (x1) {$x_1$};
		\draw (2.5,-1.75) node[circle,draw] (y1) {$y_1$};
		\draw (5,0) node[circle,draw] (x2) {$x_2$};
		\draw (5,-1.75) node[circle,draw] (y2) {$y_2$};
		\draw (7.5,0) node[circle,draw] (x3) {$x_3$};
		\draw (7.5,-1.75) node[circle,draw] (y3) {$y_3$};
		
		\draw (10,0) node[circle] (ynext) {$\color{gray}\boldsymbol{\ldots}$};
		
		\path[->, very thick] (y0) edge node[midway, xshift=-7pt, yshift=7pt] {$\pi_{1 \vert 2}$}  (x1) ;
		\path[->, very thick] (x1) edge node[left, xshift=5pt] {$\pi_{2 \vert 1}$}(y1);
		\path[->,  thick, gray] (y3) edge (ynext);
		\path[->, very thick] (y1) edge node[midway, xshift=-7pt, yshift=7pt] {$\pi_{1 \vert 2}$} (x2);
		\path[->,very thick] (x2) edge node[left, xshift=5pt] {$\pi_{2 \vert 1}$}  (y2);
		\path[->, very thick] (y2) edge node[midway, xshift=-7pt, yshift=7pt] {$\pi_{1 \vert 2}$} (x3);
		\path[->,very thick] (x3) edge node[left, xshift=5pt] {$\pi_{2 \vert 1}$}  (y3);
		
	\end{tikzpicture}
	\caption{Transition principle of Gibbs sampling.}\label{F: Gibbs sampling}
\end{figure}

If we only consider the first block of coordinates, we obtain again a Markov chain, which is $\mathsf{E}_1$-valued and has transition kernel $\mathtt{Gibbs}_1:=\pi_{2 \vert 1}\pi_{1 \vert 2}$.
Similarly, the second block of coordinates is a $\mathsf{E}_2$-valued Markov chain with transition kernel $\mathtt{Gibbs}_2:= \pi_{1\vert 2} \pi_{2 \vert 1}$,
and we can also flip the order in which we update the two blocks of coordinates resulting in a ``flipped'' Gibbs sampler with transition kernel
\[
	\mathtt{FGibbs}\big((x,y), \mathsf{A}\big) :=  \int_{\mathsf{E}_2} \int_{\mathsf{E}_1} \mathbbm{1}_{\mathsf{A}}\big((\tilde{x}, \tilde{y})\big)\ \pi_{1 \vert 2}(\tilde{y}, \d \tilde{x})\, \pi_{2 \vert 1}(x, \d \tilde{y}), \qquad (x,y) \in \mathsf{E}, \mathsf{A} \in \mathcal{B}(\mathsf{E}).
\]
\cite{roberts2001markov} observe that in terms of total variation distance, 
the convergence behaviour of the Gibbs sampler is controlled by the behaviour of $\mathtt{Gibbs}_2$, and similarly also by the behaviour of $\mathtt{Gibbs}_1$.
Moreover, the convergence behaviour of $\mathtt{Gibbs}$ and $\mathtt{Gibbs}_2$ is also known to be related in terms of  weak Poincar{\'e} inequalities \cite[Theorems 3.4 and 3.5]{gao2026weak}.
The following proposition extends these results to Wasserstein distance and covers also the flipped Gibbs sampler.

\begin{proposition}\label{P: De-initialization for Gibbs sampling}
	In the setting of this section,
	let $\mathrm{d}_1: \mathsf{E}_1 \times \mathsf{E}_1 \to [0, \infty)$ and $\mathrm{d_2}:\mathsf{E}_2 \times \mathsf{E}_2 \to [0, \infty)$ be two lower semi-continuous metrics. Moreover, let $\mathrm{d}:\mathsf{E} \times \mathsf{E} \to [0, \infty)$ be the $\mathfrak{p}$-product metric of $\mathrm{d}_1$ and $\mathrm{d}_2$ on $\mathsf{E}$ for some $\mathfrak{p} \in [1, \infty]$.
	\begin{enumerate}[label=\alph*)]
		\item Assume there exists $q \in [1, \infty)$ and $C_1 \in [0, \infty)$ such that
			\begin{equation}\label{Eq: condition Wasserstein de-initialiaztation with x-gibbs sampling upper bound}
				W^{(q)}_{\mathrm{d}_2}\big(\pi_{2 \vert 1}(x_1, \cdot), \pi_{2 \vert 1}(x_2, \cdot)\big) \leq C_1\cdot \mathrm{d}_1(x_1,x_2), \qquad \forall x_1,x_2 \in \mathsf{E}_1.
			\end{equation}
			Then we have for all $(x,y) \in \mathsf{E}$, $n \in \mathbb{N}$ that
			\begin{multline*}
				W^{(q)}_{\mathrm{d}_1}\big(\pi_{1 \vert 2}\mathtt{Gibbs}_1^{n-1}(y,\cdot), \pi_1\big)
				\leq W^{(q)}_{\mathrm{d}}\Big(\mathtt{Gibbs}^n\big((x,y), \cdot\big), \pi\Big)\\
				\leq (C_1 + 1)\cdot W^{(q)}_{\mathrm{d}_1}\big(\pi_{1 \vert 2}\mathtt{Gibbs}_1^{n-1}(y,\cdot), \pi_1\big),
			\end{multline*}
			In fact, the first inequality in the previous equation holds always for any $q \in [1, \infty)$ also without \eqref{Eq: condition Wasserstein de-initialiaztation with x-gibbs sampling upper bound}. Moreover, we have for all $(x,y \in \mathsf{E})$, $n \in \mathbb{N}$
			\[
				W^{(q)}_{\mathrm{d}}\Big(\mathtt{Gibbs}^n\big((x,y), \cdot\big), \pi\Big)
				\leq (C_1+1)\cdot W^{(q)}_{\mathrm{d}}\big(\pi_{1 \vert 2}\mathtt{FGibbs}^{n-1}(y, \cdot), \pi\big).
			\]
		\item If there exists $q \in [1, \infty)$ and $C_2 \in [0, \infty)$ such that for all $(x_1,y_1),(x_2,y_2) \in \mathsf{E}$ holds
			\begin{equation}\label{Eq: condition Wasserstein de-initialiaztation with y-gibbs sampling upper bound}
				W^{(q)}_{\mathrm{d}}\Big(\mathtt{Gibbs}\big((x_1,y_1), \cdot\big), \mathtt{Gibbs}\big((x_1,y_1), \cdot\big)\Big) 
				\leq C_2\cdot \mathrm{d}_2(y_1,y_2),
			\end{equation}
			then we have for all $(x,y) \in \mathsf{E}, n \in \mathbb{N}$ that
			\[
				W^{(q)}_{\mathrm{d}_2}\big(\mathtt{Gibbs}_2^{n}(y,\cdot), \pi_2\big)
				\leq W^{(q)}_{\mathrm{d}}\Big(\mathtt{Gibbs}^n\big((x,y), \cdot\big), \pi\Big)
				\leq C_2\cdot W^{(q)}_{\mathrm{d}_2}\big(\mathtt{Gibbs}_2^{n-1}(y,\cdot), \pi_2\big).
			\]
			In fact, the first inequality holds always for any $q \in [1, \infty)$ without assuming \eqref{Eq: condition Wasserstein de-initialiaztation with y-gibbs sampling upper bound}.
	\end{enumerate}
\end{proposition}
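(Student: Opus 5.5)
Every inequality will come from Theorem \ref{Thm: Wasserstein de-initialization} combined with one of Lemmas \ref{L: De-initialization via splitting} or \ref{L: extended lumping}. We evaluate at $\mu=\delta_{(x,y)}$ and at $\nu=\pi$ (or its marginals), and use invariance: $\pi$ is invariant for $\mathtt{Gibbs}$ and $\mathtt{FGibbs}$, $\pi_1$ for $\mathtt{Gibbs}_1$, $\pi_2$ for $\mathtt{Gibbs}_2$, and $\pi_2\pi_{1\vert 2}=\pi_1$, $\pi_1\otimes\pi_{2\vert 1}=\pi=\pi_2\otimes\pi_{1\vert2}$ up to coordinate order.

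\emph{Part a).} Write $\mathtt{Gibbs}=H_1H_2$ with $\mathsf{X}=\mathsf{E}$, $\mathsf{Y}=\mathsf{E}_1$, $H_1((x,y),\cdot)=\pi_{1\vert 2}(y,\cdot)$ and $H_2(\tilde x,\cdot)=\delta_{\tilde x}\otimes\pi_{2\vert 1}(\tilde x,\cdot)$. Then $\tilde K=H_2H_1=\pi_{2\vert1}\pi_{1\vert2}=\mathtt{Gibbs}_1$, and $\Pi(\mu,n)=\mu H_1\mathtt{Gibbs}_1^{n-1}$. For the upper bound, apply Lemma \ref{L: Tensor product Wasserstein distance} exactly as in the linchpin case, with $S=\delta$, $K_1=1$ and $T=\pi_{2\vert1}$, $K_2=C_1$. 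Together with \eqref{Eq: product metric ineqaulity}, this gives $W^{(q)}_{\mathrm d}(H_2(x_1,\cdot),H_2(x_2,\cdot))\le (C_1+1)\mathrm d_1(x_1,x_2)$. Lemma \ref{L: De-initialization via splitting} a) and Theorem \ref{Thm: Wasserstein de-initialization} then yield the upper inequality, since $\delta_{(x,y)}H_1=\pi_{1\vert2}(y,\cdot)$ and $\pi H_1=\pi_2\pi_{1\vert2}=\pi_1$, whence $\pi_1\mathtt{Gibbs}_1^{n-1}=\pi_1$. For the lower bound, use part b) of the same lemma with $f$ the projection onto $\mathsf{E}_1$. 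This $f$ is $1$-Lipschitz for $\mathrm d$ by the left-hand side of \eqref{Eq: product metric ineqaulity}, and the fiber property \eqref{Eq: Fiber property} holds by construction of $H_2$. Hence $\mathtt{Gibbs}_1$ is $q$-Wasserstein de-initialized by $\mu\mapsto \mu H_2\mathtt{Gibbs}^n$ with constant $1$. We apply this with $\mu=\pi_{1\vert2}(y,\cdot)$, for which $\mu H_2=\delta_{(x,y)}\mathtt{Gibbs}$, and with $\mu=\pi_1$. The time indices then shift: $\pi_{1\vert2}\mathtt{Gibbs}_1^{n-1}(y,\cdot)$ is compared with $\delta_{(x,y)}\mathtt{Gibbs}^{n}$. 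This bookkeeping of indices needs care, and it is where I expect small errors to arise.

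For the $\mathtt{FGibbs}$ bound, take instead $\mathsf{Y}=\mathsf{E}$ with the metric $\mathrm d$. We have $\mathtt{Gibbs}=H_1'H_2'$ with $H_1'((x,y),\cdot)=\pi_{1\vert2}(y,\cdot)\otimes\delta_y$ and $H_2'((\tilde x,y),\cdot)=\delta_{\tilde x}\otimes\pi_{2\vert1}(\tilde x,\cdot)$, so that $H_2'H_1'=\mathtt{FGibbs}$. The Lipschitz constant of $H_2'$ with respect to $\mathrm d$ on $\mathsf{E}$ is at most $C_1+1$, because $\mathrm d_1\le\mathrm d$. Lemma \ref{L: De-initialization via splitting} a) then gives the bound, using $\delta_{(x,y)}H_1'=\pi_{1\vert2}(y,\cdot)\otimes\delta_y$ (which the statement abbreviates as $\pi_{1\vert2}$) and $\pi H_1'=\pi$.

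\emph{Part b).} Here I use Lemma \ref{L: extended lumping} with $f(x,y)=y$. The hypothesis holds because $\mathtt{Gibbs}((x,y),\cdot)$ depends only on $y$. The lumped kernel is $Q=\mathtt{Gibbs}_2$ and $R(y,\cdot)=\mathtt{Gibbs}((x,y),\cdot)$. Condition \eqref{Eq: condition Wasserstein de-initialiaztation with y-gibbs sampling upper bound}, read with the obvious correction $(x_2,y_2)$ in the second argument, is precisely \eqref{Eq: condition for upper Wasserstein de-initialization in lumping case} with $C_1=C_2$. Part a) of that lemma together with Theorem \ref{Thm: Wasserstein de-initialization}, applied with $\Pi(\mu,n)=f(\mu)\mathtt{Gibbs}_2^{n-1}$, gives the upper bound, since $f(\pi)=\pi_2$. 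Part b) of the lemma, with $f$ $1$-Lipschitz, says $\mathtt{Gibbs}_2$ is de-initialized by $\mu R\,\mathtt{Gibbs}^{n-1}$ with constant $1$. Evaluating at $\delta_y$, where $\delta_yR\,\mathtt{Gibbs}^{n-1}=\mathtt{Gibbs}^n((x,y),\cdot)$, and at $\pi_2$, where $\pi_2R=\pi$, gives the lower bound.

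The main obstacle is verifying the Lipschitz constant $C_1+1$ of $H_2$ and $H_2'$ for the $\mathfrak p$-product metric. This rests on Lemma \ref{L: Tensor product Wasserstein distance} from the appendix, together with checking that its hypotheses fit here with the identity factor.
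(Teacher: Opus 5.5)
Your proposal is correct and follows the paper's proof essentially step for step. It uses the same splitting $\mathtt{Gibbs}=H_1H_2$ through $\mathsf{E}_1$ (tensor-product lemma for the constant $C_1+1$, projection onto $\mathsf{E}_1$ for the reverse direction), the same splitting through $\mathsf{E}$ for $\mathtt{FGibbs}$, and the same lumping with $f(x,y)=y$ in part b). On the index shift you flagged: it yields the lower bound in a) only for $n\ge 2$, since Theorem~\ref{Thm: Wasserstein de-initialization} is stated for $n\in\mathbb{N}$; the case $n=1$ follows directly because the projection onto $\mathsf{E}_1$ is $1$-Lipschitz for $\mathrm{d}$ and pushes $\mathtt{Gibbs}((x,y),\cdot)$ and $\pi$ forward to $\pi_{1\vert 2}(y,\cdot)$ and $\pi_1$, a point the paper also leaves implicit.
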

\begin{proof}
	\textit{Ad a):}
	We first tend to the chain of inequalities involving $\mathtt{Gibbs}_1$.
	To this end, we apply Lemma \ref{L: De-initialization via splitting} with $\mathsf{X} = \mathsf{E}$, $\mathsf{Y} = \mathsf{E}_1$, $K = \mathtt{Gibbs}$, and
	$H_1 :\mathsf{E} \times \mathcal{B}(\mathsf{E}_1)\to [0,1], ((x,y), \mathsf{A})\mapsto \pi_{1 \vert 2}(y, \mathsf{A})$, and
	$H_2 : \mathsf{E}_1 \times \mathcal{B}(\mathsf{E})\to [0,1], (x, \mathsf{C}) \mapsto \int_{\mathsf{E}_2} \mathbbm{1}_{\mathsf{C}}((x,y))\ \pi_{2 \vert 1}(x, \d y)$.
	Then $\tilde{K}= H_2H_1 = \pi_{2 \vert 1}\pi_{1 \vert 2}=\mathtt{Gibbs}_1$.
	We get that $\mathtt{Gibbs}$ is de-initialized by the mapping given by $\Pi_1(\mu, n) = \mu_{\mathsf{E}_2}\pi_{1 \vert 2}\mathtt{Gibbs}_1^{n-1}$ for $\mu \in \mathcal{M}_{\mathsf{E}}$, $n \in \mathbb{N}$.
	Using \eqref{Eq: condition Wasserstein de-initialiaztation with x-gibbs sampling upper bound},
	we get from an application of Lemma \ref{L: Tensor product Wasserstein distance} with $\mathsf{A} = \mathsf{B}=\mathsf{E}_1$, $\mathsf{C} = \mathsf{E}_2$ equipped with $\mathrm{d}_1$ and $\mathrm{d}_2$, respectively, $S(x, \cdot)= \delta_x$ for $x \in \mathsf{E}_1$, $T= \pi_{2 \vert 1}$, $K_1 = 1$ and $K_2 = C_1$, that
	\[
		W_{\mathrm{d}}^{(q)}\big(H_2(x_1, \cdot), H_2(x_2,\cdot )\big) \leq (C_1 + 1) \cdot \mathrm{d}_{1}(x_1,x_2), \qquad \forall x_1,x_2 \in \mathsf{E}_1.
	\]
	Hence, $\Pi_1$ is also $q$-Wasserstein de-initializing for $\mathtt{Gibbs}$.
	
	If we choose $f: \mathsf{E} \to \mathsf{E}_1, (x,y) \mapsto x$ as the projection onto $\mathsf{E}_1$ in Lemma \ref{L: De-initialization via splitting}, 
	then \eqref{Eq: Fiber property} holds by construction.
	Therefore the mapping given by $\Pi_{\mathtt{G}1}(\mu,n) = (\mu \otimes \pi_{2 \vert 1})\mathtt{Gibbs}^n$ for $\mu \in \mathcal{M}_{\mathsf{E}_1}$, $n \in \mathbb{N}$ is de-initializing for $\mathtt{Gibbs}_1$. 
	As $f$ is 1-Lipschitz, $\Pi_{\mathtt{G}1}$ is also $q$-Wasserstein de-initializing for $\mathtt{Gibbs}_1$ with constant 1.
	
	The Wasserstein inequalities follow from Theorem \ref{Thm: Wasserstein de-initialization} using the invariance of $\mathtt{Gibbs}$ and $\mathtt{Gibbs}_1$ with respect to $\pi$ and $\pi_1$, respectively, and $(\pi_{1 \vert 2}(y, \cdot) )\otimes \pi_{2 \vert 1} = \mathtt{Gibbs}((x,y), \cdot)$ for all $(x,y) \in \mathsf{E}$.
	
	We turn to the statement about the flipped Gibbs sampler.
	We apply Lemma \ref{L: De-initialization via splitting}.a) for $\mathsf{X} = \mathsf{Y} = \mathsf{E}$, $K=\mathtt{Gibbs}$, and 
	$H_1:\mathsf{E}\times \mathcal{B}(\mathsf{E}), ((x,y), \mathsf{C})\mapsto \int_{\mathsf{X}} \mathbbm{1}_{\mathsf{C}}(\widetilde{x}, y)\ \pi_{1 \vert 2}(y, \d \widetilde{x})$
	and
	$H_2: \mathsf{E}\times \mathcal{B}(\mathsf{E}), 
	 ((x,y), \mathsf{C})\mapsto \int_{\mathsf{Y}} \mathbbm{1}_{\mathsf{C}}(x, \widetilde{y})\ \pi_{2 \vert 1}(x, \d \widetilde{y})$.
	Consequently $\tilde{K}=H_2H_1 = \mathtt{FGibbs}$.
	We get that the mapping given by $\Pi_{\mathtt{F}}(\mu, n) = (\mu_{\mathsf{E}_2}\otimes \pi_{1 \vert 2})\mathtt{FGibbs}^{n-1}$ is de-initializing for $\mathtt{Gibbs}$.
	An application of Lemma \ref{L: Tensor product Wasserstein distance} with $\mathsf{A}=\mathsf{E}$, $\mathsf{B}=\mathsf{E}_1$, $\mathsf{C}=\mathsf{E}_2$,
	$S((x,y), \cdot) = \delta_x$ for $x \in \mathsf{E}_1$, $T = \pi_{2 \vert 1}$, $K_1=1$ and $K_2 = C_1$ (by \eqref{Eq: condition Wasserstein de-initialiaztation with x-gibbs sampling upper bound}) yields
	\[
		W_{\mathrm{d}}^{(q)}\big(H_2(z_1, \cdot), H_2(z_2, \cdot)\big) \leq (C_1 + 1) \cdot \mathrm{d}(z_1,z_2), \qquad \forall z_1,z_2\in \mathsf{E}.
	\]
	Hence, $\Pi_{\mathtt{F}}$ is also $q$-Wasserstein de-initializing for $\mathtt{Gibbs}$.
	The Wasserstein inequality follows then by Theorem \ref{Thm: Wasserstein de-initialization} and the invariance of $\mathtt{Gibbs}$ and $\mathtt{FGibbs}$ with respect to $\pi$.
	
	\textit{Ad b):}
	We apply Lemma \ref{L: extended lumping} with $\mathsf{X} = \mathsf{E}$,
	$\mathsf{Y} = \mathsf{E}_2$, $K = \mathtt{Gibbs}$ and $f: \mathsf{E}\to \mathsf{E}_2, (x,y) \mapsto y$.
	We get $R = \pi_{1 \vert 2} \otimes \pi_{2 \vert 1}$ and $Q = \pi_{1 \vert 2}\pi_{2 \vert 1}=\mathtt{Gibbs}_2$,
	such that the mapping given by $\Pi_2(\mu,n) = \mu_{\mathsf{E}_2}\mathtt{Gibbs}_2^{n-1}$ for $\mu \in \mathcal{M}_{\mathsf{E}}$, $n \in \mathbb{N}$ is de-initializing for $\mathtt{Gibbs}$.
	Since $\mathtt{Gibbs}((x,y), \cdot) =  (\pi_{1 \vert 2} \otimes \pi_{2 \vert 1})(y, \cdot)=R(y, \cdot)$ for all $(x,y) \in \mathsf{E}$,
	Condition \eqref{Eq: condition Wasserstein de-initialiaztation with y-gibbs sampling upper bound} yields
	\[
		W^{(q)}_{\mathrm{d}}\big(R(y_1, \cdot), R(y_2, \cdot)\big) \leq C_2 \cdot \mathrm{d}_2(y_1,y_2), \qquad \forall y_1,y_2 \in \mathsf{E}_2.
	\]
 	Hence $\Pi_2$ is also $q$-Wasserstein de-initializing for $\mathtt{Gibbs}$.
	
	On the other hand, because $f$ is 1-Lipschitz, the mapping given by $\Pi_{\mathtt{G}2}(\mu,n) = \mu R \mathtt{Gibbs}^{n-1}$ for $\mu \in \mathcal{M}_{\mathsf{E}_2}$, $n \in \mathbb{N}$ is $q$-Wasserstein de-initializing for $\mathtt{Gibbs}_2$ with constant 1.
	Overall, the Wasserstein inequalities follow then from Theorem \ref{Thm: Wasserstein de-initialization} using $\mu_{\mathsf{E}_2}R=\mu\mathtt{Gibbs}$ for $\mu \in \mathcal{M}_{\mathsf{E}}$ and the invariance of $\mathtt{Gibbs}$ and $\mathtt{Gibbs}_2$ with respect to $\pi$ and $\pi_2$, respectively.
\end{proof}
The following special case may be helpful to establish \eqref{Eq: condition Wasserstein de-initialiaztation with y-gibbs sampling upper bound}.
\begin{remark}
	If there exists  $\widetilde{C}_1,\widetilde{C}_2 \in [0, \infty)$ and $q \in [1, \infty)$ such that
	\begin{align*}
		W_{\mathrm{d}_1}^{(q)}\big(\pi_{1 \vert 2}(y_1, \cdot), \pi_{1 \vert 2}(y_2, \cdot)\big) &\leq \widetilde{C}_1 \cdot\mathrm{d}_2(y_1,y_2),\qquad \forall y_1,y_2 \in \mathsf{E}_2, \text{and}\\
		W_{\mathrm{d}_2}^{(q)}\big(\pi_{2 \vert 1}(x_1, \cdot), \pi_{2 \vert 1}(x_2, \cdot)\big) &\leq \widetilde{C}_2 \cdot\mathrm{d}_1(x_1,x_2), \qquad \forall x_1,x_2 \in \mathsf{E}_1,
	\end{align*}
	then \eqref{Eq: condition Wasserstein de-initialiaztation with y-gibbs sampling upper bound} holds with $C_2 = \widetilde{C}_1(\widetilde{C}_2+1)$.
	This follows from an application of Lemma \ref{L: Tensor product Wasserstein distance} with $\mathsf{A}=\mathsf{C} = \mathsf{E}_2$, $\mathsf{B} = \mathsf{E}_1$, $S= \pi_{1 \vert 2}$ and $T=\pi_{2 \vert 1}$.
\end{remark}

The following corollary illustrates the implications of Proposition \ref{P: De-initialization for Gibbs sampling} by using it to leverage  the special convergence properties of Markov chains on finite state spaces for Gibbs sampling.
In terms of total variation distance this setting is considered by \cite{diebolt1994estimation}, \cite[End of Section 3.3]{roberts2001markov}.

\begin{corollary}
	In the setting of this section, if $p>0$ and either $\mathsf{E}_1$ or $\mathsf{E}_2$ are finite, then $\mathtt{Gibbs}$ is uniformly Wasserstein ergodic for all $q \in [1, \infty)$,
	i.e.\ for all $q \in [1, \infty)$ there exist $C \in [0, \infty)$ and $\varrho \in [0,1)$ such that
	\[
		W^{(q)}_{\mathrm{d}}\Big(\mathtt{Gibbs}^n\big((x,y), \cdot\big), \pi\Big) \leq C \varrho^n, \qquad \forall (x,y) \in \mathsf{E}, n \in \mathbb{N}.
	\]
\end{corollary}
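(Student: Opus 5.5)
We split into the two cases $\mathsf{E}_1$ finite and $\mathsf{E}_2$ finite. In each case the plan is the same. We use Proposition \ref{P: De-initialization for Gibbs sampling} to reduce the Wasserstein convergence of $\mathtt{Gibbs}$ to that of a Markov chain on a \emph{finite} state space. That chain is $\mathtt{Gibbs}_1$ in part a) and $\mathtt{Gibbs}_2$ in part b). We then use the elementary fact that on a finite space Wasserstein distances are dominated by total variation distance, which converges geometrically for a strictly positive kernel.

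\emph{Step 1 (finite chain is uniformly ergodic).} Suppose $\mathsf{E}_1$ is finite; the case of $\mathsf{E}_2$ finite is symmetric. Since $p>0$ and $\Lambda_1$ charges every point with positive $p_1$, we have $\mathtt{Gibbs}_1(x,\{x'\}) = \int \pi_{1\vert 2}(\tilde y,\{x'\})\,\pi_{2\vert 1}(x,\d\tilde y) \ge \varepsilon$ for some $\varepsilon>0$, uniformly in $x,x'$. This is a minimum of finitely many strictly positive numbers, and we will need the convention that $\Lambda_1$ gives positive mass to the atoms where $\pi_1$ lives, which we would state explicitly. Hence Doeblin's condition gives $d_{\mathrm{tv}}(\nu\mathtt{Gibbs}_1^{m},\pi_1)\le (1-\varepsilon')^{m}$ for every initial $\nu$, for some $\varepsilon' \in (0,1]$. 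Taking $\nu=\pi_{1\vert 2}(y,\cdot)$ covers all $y$ at once.

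\emph{Step 2 (from total variation to Wasserstein).} On the finite space, put all the mass that two measures share on the diagonal of the coupling. This yields
\[
W^{(q)}_{\mathrm{d}_1}(\mu,\nu)\le \operatorname{diam}_{\mathrm{d}_1}(\mathsf{E}_1)\, d_{\mathrm{tv}}(\mu,\nu)^{1/q}.
\]
Therefore $W^{(q)}_{\mathrm{d}_1}(\pi_{1\vert 2}\mathtt{Gibbs}_1^{n-1}(y,\cdot),\pi_1)\le \operatorname{diam}\,(1-\varepsilon')^{(n-1)/q}$ uniformly in $y$, which is geometric in $n$ with rate $\varrho=(1-\varepsilon')^{1/q}$.

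\emph{Step 3 (verifying the Lipschitz condition).} To apply the upper bound of Proposition \ref{P: De-initialization for Gibbs sampling} a), we need \eqref{Eq: condition Wasserstein de-initialiaztation with x-gibbs sampling upper bound}. Because $\mathsf{E}_1$ is finite and $\mathrm{d}_1$ is a metric, $\delta:=\min_{x_1\neq x_2}\mathrm{d}_1(x_1,x_2)>0$. We can then take
\[
C_1:=\delta^{-1}\max_{x_1,x_2}W^{(q)}_{\mathrm{d}_2}\bigl(\pi_{2\vert 1}(x_1,\cdot),\pi_{2\vert 1}(x_2,\cdot)\bigr),
\]
a maximum over finitely many terms. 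This combines with Step 2 to give the claim with $C=(C_1+1)\operatorname{diam}\,\varrho^{-1}$.

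In the case $\mathsf{E}_2$ finite, we analogously use part b) together with $\mathtt{Gibbs}_2$. Here \eqref{Eq: condition Wasserstein de-initialiaztation with y-gibbs sampling upper bound} follows because $\mathtt{Gibbs}((x,y),\cdot)=R(y,\cdot)$ depends only on $y$, which ranges over a finite set. So $C_2 = \delta^{-1}\max_{y_1,y_2}W^{(q)}_{\mathrm{d}}(R(y_1,\cdot),R(y_2,\cdot))$ works, where now $\delta$ is the minimal positive $\mathrm{d}_2$-distance on $\mathsf{E}_2$.

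\emph{Main obstacle.} The delicate point is the finiteness of the maxima in Step 3. It requires the conditional laws $\pi_{2\vert 1}(x,\cdot)$ (respectively $R(y,\cdot)$) to have finite $q$-th moments with respect to $\mathrm{d}_2$ (respectively $\mathrm{d}$). Without this, even $W^{(q)}_{\mathrm{d}}(\mathtt{Gibbs}^n((x,y),\cdot),\pi)$ may be infinite, and the statement could not hold. I would first check that these moments are finite: with finitely many conditional laws, a moment bound is then automatic once $\pi$ has a finite $q$-th moment. If they are not guaranteed by the setting, I would add this as an implicit standing assumption, and note that it is necessary for the left-hand side to be finite.
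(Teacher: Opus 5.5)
Your proposal is essentially the paper's proof written out in detail: the same case split, Proposition~\ref{P: De-initialization for Gibbs sampling}~a) resp.~b), and uniform geometric total variation ergodicity of the finite chain $\mathtt{Gibbs}_1$ resp.\ $\mathtt{Gibbs}_2$ turned into a Wasserstein bound, where the paper cites \cite[Theorem 6.15]{villani2009optimal} and you use the diagonal coupling bound $W^{(q)}_{\mathrm{d}_1}\le \operatorname{diam}_{\mathrm{d}_1}(\mathsf{E}_1)\, d_{\mathrm{tv}}^{1/q}$ (replace $\varrho$ by $\max\{\varrho,1/2\}$ to cover $\varepsilon'=1$, where your $\varrho^{-1}$ is undefined).

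Your Step~3 caveat is correct and points to a gap in the paper, which calls the Lipschitz condition ``trivially satisfied''. That condition needs the finitely many distances $W^{(q)}_{\mathrm{d}_2}\big(\pi_{2\vert 1}(x_1,\cdot),\pi_{2\vert 1}(x_2,\cdot)\big)$ (resp.\ $W^{(q)}_{\mathrm{d}}\big(R(y_1,\cdot),R(y_2,\cdot)\big)$) to be finite. For example, $\mathsf{E}_1=\{a,b\}$, $\mathsf{E}_2=\mathbb{R}$, $q=1$, with one Cauchy and one Gaussian conditional, gives $W_{\mathrm{d}}\big(\mathtt{Gibbs}^n((x,y),\cdot),\pi\big)=\infty$ for generic $y$, so an extra assumption is genuinely required. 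Finite $q$-th moments are sufficient but not necessary, since product densities give finite distances whatever the tails, so your claim that they are necessary is slightly overstated.
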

\begin{proof}
	Fix $q \in [1, \infty)$.
	First assume that $\mathsf{E}_1$ is finite.
	Then \eqref{Eq: condition Wasserstein de-initialiaztation with x-gibbs sampling upper bound} is trivially satisfied.
	The claim follows from Proposition \ref{P: De-initialization for Gibbs sampling}.a), because $\mathtt{Gibbs}_{1}$ induces irreducible, aperiodic Markov chains on a finite state space,
	which are always uniformly Wasserstein ergodic by virtue of \cite[Theorem 6.15]{villani2009optimal} and their uniform geometric ergodicity with respect to total variation distance.
	
	If $\mathsf{E}_2$ is finite, the statement follows by a similar argument using Proposition \ref{P: De-initialization for Gibbs sampling}.b) and $\mathtt{Gibbs}((x,y), \cdot) =  (\pi_{1 \vert 2} \otimes \pi_{2 \vert 1})(y, \cdot)$ for all $(x,y) \in \mathsf{E}$.
\end{proof}

We finish this section with an explicit example of an application of Proposition \ref{P: De-initialization for Gibbs sampling}.
\begin{example}
	Let $\alpha \in (0, 1)$. 
	Consider the Gibbs sampler on $\mathbb{R}^2$ build from $\pi_{1\vert2}(z, \cdot) = \pi_{2 \vert 1}(z, \cdot) = \mathcal{N}(\alpha z, 1- \alpha^2)$, $z \in \mathbb{R}$,
	where $\mathcal{N}(\mu, \sigma^2)$ denotes the 1-dimensional normal distribution with mean $\mu$ and variance $\sigma^2$.
	This Gibbs sampler is invariant with respect to the 2-dimensional normal distribution with mean $\left(\begin{smallmatrix}
		0\\
		0\end{smallmatrix}\right)$ and covariance matrix $\left(\begin{smallmatrix}
																											1 & \alpha\\
																											\alpha & 1\end{smallmatrix}\right)$,
	and for the transition kernel of the marginal chain of the first coordinate holds $\mathtt{Gibbs}_1(x, \cdot) = \mathcal{N}(\alpha^2 x, (1-\alpha^4)^2)$,
	see \cite[Example 9.1]{robert2004Monte}.
	Using the explicit formula for the 2-Wasserstein distance of normal distributions,
	we get $W_{|\cdot|}^{(2)}(\mathtt{Gibbs}_1(x_1, \cdot), \mathtt{Gibbs}_1(x_1, \cdot)) = \alpha^2| x_1 - x_2|$ for all $x_1,x_2 \in \mathbb{R}$,
	such that $\mathtt{Gibbs}_1$ is Wasserstein contractive with rate $\alpha^2$.
	We can leverage this convergence behaviour to the whole Gibbs sampler  by applying Proposition \ref{P: De-initialization for Gibbs sampling}.a):
	Since $W_{|\cdot|}^{(2)}(\pi_{2 \vert 1}(x_1, \cdot), \pi_{2 \vert 1}(x_2, \cdot)) = \alpha |x_1-x_2|$ for all $x_1,x_2 \in \mathbb{R}$, 
	we have
	\[
		W_{\|\cdot\|}^{(2)}\left (\mathtt{Gibbs}^n(z, \cdot), \mathcal{N}\big(\begin{pmatrix}
																		0\\ 0
																		\end{pmatrix},
																		\begin{pmatrix}
																			1 & \alpha\\
																			\alpha & 1\end{pmatrix}\big)\right )
				\leq \frac{1+\alpha}{\alpha^2} \cdot \alpha^{2n}, \qquad \forall z \in \mathbb{R}^2, n \in \mathbb{N}.
	\]
\end{example}

\subsection{Slice sampling}\label{S: Slice sampling}
In this section we assume that our distribution of interest $\pi$ is of the following form.
\begin{assumption}\label{A: Slice sampling}
	Let $(\mathsf{X}, \mathcal{B}(\mathsf{X}))$ be a Polish space equipped with a $\sigma$-finite reference measure $\varrho$.
	Consider $\pi(\d x) \propto p(x)\varrho(\d x)$,
	where $p: \mathsf{X} \to (0, \infty)$ is a measurable function with $\int_{\mathsf{X}} p\, \d \varrho \in (0,\infty)$,
	$\sup_{x \in \mathsf{X}}|p(x)| = \sup_{x \in \mathsf{X}\setminus \mathsf{N}}|p(x)|$ for all $\mathsf{N} \in \mathcal{B}(\mathsf{X})$ with $\varrho(\mathsf{N})= 0$, and
	$p(\mathsf{X})\subseteq (0,\infty)$ is an interval.
\end{assumption}
For the unnormalized density $p$ we define its \emph{(super) level sets} $\mathsf{L}(t):= \{x \in \mathsf{X} \mid p(x)> t\}$, $t \in [0, \infty)$, and its \emph{level set function} $\ell: (0, \infty) \to [0, \infty), t \mapsto \varrho\big(\mathsf{L}(t)\big)$.
With these objects the kernel of the \emph{ideal slice sampler} with invariant distribution $\pi$ is 
\[
	\mathtt{S}: \mathsf{X} \times \mathcal{B}(\mathsf{X}) \to [0,1], \qquad
	(x, \mathsf{A}) \mapsto \frac{1}{p(x)}\int_{(0, p(x))} \frac{1}{\ell(s)} \int_{\mathsf{L}(s)}\mathbbm{1}_{\mathsf{A}}(y)\ \varrho(\d y)\, \lambda_{1}(\d s).
\] 
In words, the transition mechanism of ideal slice sampling first samples a new level $t \in (0, \infty)$ below the current value of the unnormalized target density $p$.
The next state is then sampled from the normalized restriction of the reference measure $\varrho$ to the level set $\mathsf{L}(t)$.
For more details, we refer to \cite{schaer2025slice, hasenpflug2024slice}.

Let $\mathrm{d}:\mathsf{X}\times \mathsf{X}\to[0, \infty)$ be some lower semi-continuous pseudo metric.
Our aim is to establish conditions for Wasserstein ergodicity of  ideal slice sampling in terms of the 1-Wasserstein distance induced by $\mathrm{d}$.
To formulate these conditions, we define for $\alpha \in [0, \infty), \kappa \in (0, \infty)$ the functions
\begin{equation}\label{Eq: definition m_alpha_kappa}
	m_{\alpha, \kappa}: \mathsf{W}^2 \to [0,\infty), \qquad (r,t) \mapsto \begin{dcases}
																					\left|(\ell(r)/\kappa)^\alpha -  (\ell(t)/\kappa)^\alpha\right|,& \alpha > 0,\\
																					|\ln(\ell(r)) - \ln(\ell(t))|, & \alpha = 0, \ell> 0 \text{ on } \mathsf{W},
																				\end{dcases}
\end{equation}
where $\mathsf{W}:= p(\mathsf{X}) \cup (0, \inf_{x \in \mathsf{X}}p(x)]$.
To ensure well definedness, we only allow $\alpha = 0$ if the level set function is strictly positive on $\mathsf{W}$.
This is implicitly imposed throughout the section.
If $\kappa = 1$, we also omit the corresponding index.
\begin{proposition}\label{P: Wasserstein ergodicity of slice sampling}
	Impose Assumption \ref{A: Slice sampling}. Let the level set function $\ell: p(\mathsf{X}) \to [0, \infty)$ be continuous and strictly decreasing with $\lim_{t \to \|p\|_{\infty}} \ell(t)= 0$,
	and assume that there exists $\alpha \in [0, \infty)$ such that
	\begin{enumerate}[label=(\roman*)]
		\item for some  $\beta < 1+ \alpha$ holds
		$\int_{(0,1)} m_\alpha(sr, st)\ \lambda_1(\d s) \leq \beta \cdot m_\alpha(r,t)$ for all $r,t \in p(\mathsf{X})$,
		\item for some $C \in [0, \infty)$ and $\kappa \in (0, \infty)$ holds
		$W_{\mathrm{d}}\left(\frac{1}{\ell(r)} \varrho\vert_{\mathsf{L}(r)}, \frac{1}{\ell(t)} \varrho\vert_{\mathsf{L}(t)}\right)\leq C\cdot m_{\alpha, \kappa}(r,t)$ for all $r,t \in \mathsf{W}$.
	\end{enumerate}
	Then we have
	$W_{\mathrm{d}}(\mu\mathtt{S}^n, \pi) \leq C(1+\alpha) \cdot \left(\frac{\beta}{1+\alpha}\right)^n W_{m_{\alpha, \kappa}}\big(p(\mu), p(\pi)\big)$ for all $\mu \in \mathcal{M}_{\mathsf{X}}, n \in \mathbb{N}$.
\end{proposition}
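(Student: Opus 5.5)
The plan is to apply Lemma \ref{L: De-initialization via splitting}.a) to the factorization of the ideal slice sampler into a ``level'' step and a ``uniform on the level set'' step, with the auxiliary space $\mathsf{Y} = \mathsf{W}$ equipped with $\mathrm{d}_{\mathsf{Y}} = m_{\alpha,\kappa}$. I then prove a Wasserstein contraction of the induced level chain directly.

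\textbf{Step 1: the splitting.} Set $H_1(x,\cdot) := \frac{1}{p(x)}\lambda_1|_{(0,p(x))}$, a kernel from $\mathsf{X}$ to $\mathsf{W}$. Set $H_2(t,\cdot) := \frac{1}{\ell(t)}\varrho|_{\mathsf{L}(t)}$, a kernel from $\mathsf{W}$ to $\mathsf{X}$. Then $\mathtt{S} = H_1H_2$ by definition.

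Note that $m_{\alpha,\kappa}$ is a pseudo metric on $\mathsf{W}$. For $t \le \inf p$ we have $\mathsf{L}(t)=\mathsf{X}$ up to null sets, so $H_2(t,\cdot)$ is constant there, and $m_{\alpha,\kappa}$ vanishes between such levels. This is consistent with (ii).

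Condition (ii) is exactly \eqref{Eq: upper Wasserstein-deinitialization condition splitting case} with $q=1$ and $C_1=C$. Hence $\Pi(\mu,n) = \mu H_1\tilde K^{n-1}$ with $\tilde K = H_2H_1$ is $1$-Wasserstein de-initializing with constant $C$. Theorem \ref{Thm: Wasserstein de-initialization} together with $\pi\mathtt{S}^n=\pi$ gives
\[
W_{\mathrm{d}}(\mu\mathtt{S}^n,\pi)\le C\, W_{m_{\alpha,\kappa}}\big(\mu H_1\tilde K^{n-1}, \pi H_1\tilde K^{n-1}\big).
\]

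\textbf{Step 2: the first step $H_1$.} I will show $W_{m_{\alpha,\kappa}}(\mu H_1,\pi H_1) \le \beta\, W_{m_{\alpha,\kappa}}(p(\mu),p(\pi))$.

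Observe that $\mu H_1$ is the law of $S\,p(X)$ with $X\sim\mu$ and $S\sim\mathrm{Unif}(0,1)$ independent. Take an optimal coupling $(R,T)$ of $p(\mu)$ and $p(\pi)$ and use the same $S$ for both sides. Condition (i) then gives
\[
\mathbb{E}\, m_{\alpha,\kappa}(SR,ST)\le\beta\,\mathbb{E}\, m_{\alpha,\kappa}(R,T).
\]
Here (i) transfers from $\kappa=1$ to general $\kappa$ because $m_{\alpha,\kappa}=\kappa^{-\alpha}m_\alpha$ for $\alpha>0$, and $m_{0,\kappa}=m_0$. The same argument shows that $H_1$ is $\beta$-Lipschitz as a map between laws on $p(\mathsf{X})$ in $W_{m_{\alpha,\kappa}}$.

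\textbf{Step 3: contraction of $H_2$ on levels (main obstacle).} I will show that the image of $H_2(t,\cdot)$ under $p$ contracts with factor $1/(1+\alpha)$.

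For $X\sim H_2(t,\cdot)$ and $u\ge t$ we have $\mathbb{P}(p(X)>u) = \ell(u)/\ell(t)$. Since $\ell$ is continuous and strictly decreasing on $p(\mathsf{X})$ with $\ell\to0$ at $\|p\|_\infty$, the variable $\ell(p(X))$ is uniform on $(0,\ell(t))$. The ess-sup and interval assumptions in Assumption \ref{A: Slice sampling} are what should exclude atoms and gaps here; this is the delicate point to verify.

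Given this, couple two levels $r,t$ by $p(X_r)=\ell^{-1}(V\ell(r))$ and $p(X_t)=\ell^{-1}(V\ell(t))$ with a common $V\sim\mathrm{Unif}(0,1)$. Then:
\begin{itemize}
\item for $\alpha>0$, $m_{\alpha,\kappa}(p(X_r),p(X_t)) = V^\alpha m_{\alpha,\kappa}(r,t)$, and $\mathbb{E}V^\alpha=1/(1+\alpha)$;
\item for $\alpha=0$, the log-difference is constant, equal to $m_0(r,t)=\frac{1}{1+0}\,m_0(r,t)$.
\end{itemize}
Lift this pointwise coupling to laws by gluing it with an optimal coupling of the input laws. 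The resulting two-step coupling lives on $p(\mathsf{X})$, where (i) applies, so composing with Step 2 shows that $\tilde K$ is $\frac{\beta}{1+\alpha}$-contractive in $W_{m_{\alpha,\kappa}}$.

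\textbf{Step 4: conclusion.} Iterating the contraction $n-1$ times and combining with Step 2 gives
\[
W_{\mathrm{d}}(\mu\mathtt{S}^n,\pi)\le C\beta\Big(\tfrac{\beta}{1+\alpha}\Big)^{n-1} W_{m_{\alpha,\kappa}}\big(p(\mu),p(\pi)\big)=C(1+\alpha)\Big(\tfrac{\beta}{1+\alpha}\Big)^{n}W_{m_{\alpha,\kappa}}\big(p(\mu),p(\pi)\big).
\]

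The main obstacle is the exact identification of the law of $\ell(p(X))$ under $H_2(t,\cdot)$ in Step 3, including levels $t\le\inf p$ and the behaviour near $\|p\|_\infty$. The measurability of the glued couplings should be routine.
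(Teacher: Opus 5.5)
Your route is genuinely different from the paper's and, after one repair in Step~3, correct; it produces exactly the paper's constant $C\beta\,(\beta/(1+\alpha))^{n-1}$.

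The paper uses Lemma~\ref{L: extended lumping} with $f=p$, so the de-initializing chain is the lumped density-value chain $\mathtt{L}$ on $p(\mathsf{X})$. The kernel $R(t,\cdot)=\int_{(0,1)}H_2(st,\cdot)\,\lambda_1(\d s)$ (with your $H_2$) is shown to be $C\beta$-Lipschitz via convexity of $W_{\mathrm{d}}$, (ii) and (i). The contraction of $\mathtt{L}$ then comes from Lemma~\ref{L: explict formula for m_alpha-Wasserstein distance}, which computes $W_{m_{\alpha,\kappa}}(\mathtt{L}(t,\cdot),\mathtt{L}(r,\cdot))$ through the one-dimensional distribution-function representation of $W_1$, explicit CDFs and integration by parts.

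You instead split $\mathtt{S}=H_1H_2$ and de-initialize by the auxiliary-level chain $\tilde K=H_2H_1$. Then (ii) alone gives the constant $C$, and $\beta$ enters through the first step $\mu H_1$. You also replace the CDF calculus by a synchronous quantile coupling, in which $1/(1+\alpha)=\mathbb{E}V^\alpha$ appears at once. Since $\mathtt{L}$ and $\tilde K$ are the two compositions of the same two kernels (draw a level uniformly below the current value; push $H_2(t,\cdot)$ forward under $p$), both proofs use the same two estimates, grouped differently. Your version yields only an upper bound, which is all that is needed, and it is considerably more elementary.

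The uniformity claim you flagged in Step~3 is, however, not implied by the hypotheses. It holds for $t\in p(\mathsf{X})$, and for all $t$ when $\varrho(\{p=\inf p\})=0$. Suppose instead that $\inf p>0$ is attained with $\varrho(\{p=\inf p\})>0$. Then for $t<\inf p$ one has $H_2(t,\cdot)=\varrho/\varrho(\mathsf{X})$, and $\ell(p(X))$ has an atom at $\ell^*:=\varrho(\{p>\inf p\})<\ell(t)$. So $\ell^{-1}(V\ell(t))$ is undefined whenever $V\ell(t)>\ell^*$.

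Nothing excludes this. Take $\mathsf{X}=[0,2)$, $\varrho=\lambda_1$, $\mathrm{d}(x,y)=|x-y|$ and $p(x)=\max\{1,x\}$. This satisfies Assumption~\ref{A: Slice sampling}, the conditions on $\ell$, and (i), (ii) with $\alpha=\beta=\kappa=1$, $C=1/2$. Your level chain does visit $(0,\inf p)$, since $H_1(x,\cdot)$ charges it.

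The repair is short. For every level $t$, $\ell(p(X))$ under $H_2(t,\cdot)$ has the law of $(V\ell(t))\wedge\ell^*$; the cap is inactive for $t\in p(\mathsf{X})$. So set $p(X_t):=(\ell|_{p(\mathsf{X})})^{-1}\big((V\ell(t))\wedge\ell^*\big)$. Since $g_{\alpha,\kappa}$ is increasing, truncating both arguments at $\ell^*$ cannot increase $|g_{\alpha,\kappa}(a)-g_{\alpha,\kappa}(b)|$. Hence $m_{\alpha,\kappa}(p(X_r),p(X_t))\le V^\alpha m_{\alpha,\kappa}(r,t)$ (resp.\ $\le m_0(r,t)$ for $\alpha=0$) still holds, and the rest of your proof goes through unchanged. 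In this corner case the paper's identity $W_{m_{\alpha,\kappa}}(\mathtt{L}(t,\cdot),\mathtt{L}(r,\cdot))=\frac{1}{1+\alpha}\int_{(0,1)}m_{\alpha,\kappa}(sr,st)\,\lambda_1(\d s)$ also degrades to ``$\le$'', which is all either argument needs.

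Two cosmetic points remain. First, take $\mathsf{Y}=(0,\|p\|_\infty)$ instead of $\mathsf{W}$: $H_2(\|p\|_\infty,\cdot)$ is undefined when the supremum is attained, and $H_1$ is concentrated on this set anyway. Second, in the same corner case $m_{\alpha,\kappa}$ is not lower semicontinuous at $\inf p$. This is harmless, because the proof of Theorem~\ref{Thm: Wasserstein de-initialization} only needs a near-optimal coupling on the $\mathsf{Y}$-side.
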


Define for $\alpha \in[0, \infty)$, $\kappa \in (0, \infty)$
\[
g_{\alpha, \kappa} : \begin{dcases}
	[0, \infty) \to [0, \infty), \qquad x \mapsto (x/\kappa)^\alpha, & \alpha > 0\\
	(0, \infty) \to \mathbb{R}, \qquad x \mapsto \ln(x/\kappa), & \alpha = 0.
	\end{dcases}
\]
If $g_{\alpha, \kappa} \circ \ell \circ p:\mathsf{X} \to \mathbb{R}$ is well-defined and $K$-Lipschitz continuous, 
then the previous statement implies a uniform geometric rate of convergence for the slice sampler to its invariant distribution by virtue of Remark \ref{R: How to apply Rueschendorf} (with $\eta= p(\mu)$ and $\xi = p(\pi)$), i.e.
\begin{equation}\label{Eq: Slice sampler Wasserstein ergodicity}
	W_{\mathrm{d}}\big(\mu \mathtt{S}^n, \pi\big) \leq C(1+\alpha) K \cdot \left(\frac{\beta}{1+\alpha}\right)^n W_{\mathrm{d}}(\mu, \pi), \qquad \forall \mu \in \mathcal{M}_{\mathsf{X}}, n\in \mathbb{N}.
\end{equation}

We obtain Proposition \ref{P: Wasserstein ergodicity of slice sampling} by analysing suitable $\mathbb{R}$-valued Markov chains that are Wasserstein de-initializing for the slice sampler under the assumptions of the previous proposition.
Namely inspecting $\mathtt{S}$ more closely, we observe that it is lumping with respect to the measurable, surjective function $p: \mathsf{X}\to p(\mathsf{X})$, see Remark \ref{R: Lumping}.
The corresponding lumped kernel 
\[
	\mathtt{L}: p(\mathsf{X}) \times \mathcal{B} \big(p(\mathsf{X})\big), \qquad (t,\mathsf{B}) \mapsto \frac{1}{t} \int_{(0,t)} \frac{p(\varrho)\big((s , \infty) \cap \mathsf{B}\big)}{\ell(s)} \ \lambda_{1}(\d s)
\]
describes the evolution of the density values during slice sampling.
This is again a slice sampling kernel on the space $(p(\mathsf{X}), \mathcal{B}(p(\mathsf{X})), p(\varrho))$ with the identity function as unnormalized target density.
Since $t\cdot p(\varrho)(\d t) = p(\pi)(\d t)$, this yields that $\mathtt{L}$ has invariant distribution $p(\pi)$.

For two convergence concepts it has been established that Markov kernels related to the evolution of the values of $p$ 
are the decisive factor for the behaviour of slice sampling:
\cite{roberts2001markov} observe\footnote{
	If $(X_n)_{n \in \mathbb{N}}$ is a slice sampler on $(\mathbb{R}^d, \mathcal{B}(\mathbb{R}^d), \lambda_d)$ and $Y_n = p(X_n)$, $n \in  \mathbb{N}$,
	this reference states that $\mathbb{P}^{X_n \mid Y_n} = 1/\ell(Y_n) \lambda_d\vert_{\mathsf{L}(Y_n)}$.
	The author believes that instead this is supposed to mean that $\mathbb{P}^{X_n \mid Y_n}$ has a density with respect to the Hausdorff measure on $p^{-1}(Y_n)$, 
	as implicitly implied by \cite{roberts2002polar}.
	Nonetheless, as \cite{roberts2001markov} observe, a slice sampler is indeed de-initialized by its level chain, see also proof of Lemma \ref{L: Wasserstein de-initilaization for slice sampling}.} %
that $\mathtt{L}$ can be used for de-initialization of slice sampling,
and its convergence behaviour in total variation distance can be described in terms of $\mathtt{L}$.
Moreover, \cite{natarovskii2021quantitative,rudolf2024dimension} show that the spectral gap of a slice sampler is determined by an $\mathbb{R}$-valued transition kernel related to the auxiliary ``levels'' generated in the outer integral of $\mathtt{S}$.
Observe however that this kernel differs from $\mathtt{L}$.

In the following we demonstrate that $\mathtt{L}$ can also be used for Wasserstein de-initialization of slice sampling.
This means we require a pseudo metric on $p(\mathsf{X})$.
The trick is now to choose this pseudo metric in such a way that is useful to obtain Proposition \ref{P: Wasserstein ergodicity of slice sampling}.
It turns out that the family of functions $m_{\alpha, \kappa}$ defined in \eqref{Eq: definition m_alpha_kappa} interpreted as continuous\footnote{Continuity of $m_{\alpha, \kappa}$ is ensured by continuity of $\ell$.} pseudo metrics on $p(\mathsf{X})$ is suited for this purpose.

\begin{lemma}\label{L: Wasserstein de-initilaization for slice sampling}
	Impose Assumption \ref{A: Slice sampling}, and assume that the level set function $\ell: p(\mathsf{X})\to [0, \infty)$ is continuous.
	\begin{enumerate}[label=\alph*)]
		\item  If there exists some $\alpha \in [0, \infty)$, $\kappa \in (0, \infty)$ and constants $C_1, C_2 \in [0, \infty)$ such that
		\begin{enumerate}[label=(\roman*)]
			\item $\int_{(0,1)} m_\alpha(sr, st)\ \lambda_1(\d s)\leq  C_1\cdot m_\alpha(r, t), \qquad \forall r,t \in p(\mathsf{X})$, 
			\item $W_{\mathrm{d}}\left(\frac{1}{\ell(r)} \varrho\vert_{\mathsf{L}(r)}, \frac{1}{\ell(t)} \varrho\vert_{\mathsf{L}(t)}\right)\leq C_2\cdot m_{\alpha, \kappa}(r,t),\qquad \forall r,t \in \mathsf{W}$,
		\end{enumerate}
		then for all $n \in \mathbb{N}$, $\mu \in \mathcal{M}_{\mathsf{X}}$ holds
		$W_{\mathrm{d}} \big(\mu\mathtt{S}^n, \pi\big) \leq C_1 C_2 \cdot W_{m_{\alpha, \kappa}}\big(p(\mu)\mathtt{L}^{n-1}, p(\pi) \big)$.
		\item If there exist $\alpha \in [0, \infty)$, $\kappa \in (0, \infty)$ and $C_3 \in (0, \infty)$ 
		such that the function $g_{\alpha, \kappa} \circ \ell \circ p : \mathsf{X} \to [0, \infty)$ is well-defined and $C_3$-Lipschitz continuous 	
		with respect to $\mathrm{d}$, 
		then 
		$W_{m_{\alpha, \kappa}}(p(\mu)\mathtt{L}^n, p(\pi))\leq C_3 W_{\mathrm{d}}(\mu\mathtt{S}^n, \pi)$ for all
		$\mu\in \mathcal{M}_{\mathsf{X}}, n \in \mathbb{N}$.
	\end{enumerate}	 
\end{lemma}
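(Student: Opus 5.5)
The plan is to derive both parts from Theorem \ref{Thm: Wasserstein de-initialization}, using Lemma \ref{L: extended lumping} with $f = p$, $\mathsf{Y} = p(\mathsf{X})$ and the pseudo metric $\mathrm{d}_{\mathsf{Y}} = m_{\alpha,\kappa}$, with $q=1$. This $m_{\alpha,\kappa}$ is continuous, by continuity of $\ell$, and hence lower semi-continuous, so Assumption \ref{A: Pseudometrics} holds. Since $\mathtt{S}(x,\cdot)$ depends on $x$ only through $p(x)$, the hypothesis of Lemma \ref{L: extended lumping} is satisfied. The lumped kernel is $\mathtt{L}$, and the kernel $R$ of that lemma is
\[
R(t,\cdot) = \frac1t\int_{(0,t)} U_s\,\lambda_1(\d s) = \int_{(0,1)} U_{ut}\,\lambda_1(\d u), \qquad U_s := \tfrac{1}{\ell(s)}\varrho\vert_{\mathsf{L}(s)},
\]
where the second form comes from the substitution $s = ut$.

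\textit{Part a).} By Lemma \ref{L: extended lumping}.a) it suffices to show $W_{\mathrm{d}}(R(r,\cdot),R(t,\cdot)) \le C_1C_2\, m_{\alpha,\kappa}(r,t)$ for $r,t\in p(\mathsf{X})$.
\begin{enumerate}[label=\arabic*.]
\item Couple the two mixtures through the common variable $u$. Take a measurable family of optimal couplings $\Xi_u$ of $U_{ur}$ and $U_{ut}$, obtained as in the proof of Theorem \ref{Thm: Wasserstein de-initialization} via \cite[Theorem 1.1]{zhang2000existence} applied to $(a,b)\mapsto(U_a,U_b)$. Integrating over $u$ then gives
\[
W_{\mathrm{d}}(R(r,\cdot),R(t,\cdot)) \le \int_{(0,1)} W_{\mathrm{d}}(U_{ur},U_{ut})\,\lambda_1(\d u).
\]
\item Check that $ur, ut \in \mathsf{W}$. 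For $u\in(0,1)$ we have $ur<r\in p(\mathsf{X})$. Since $p(\mathsf{X})$ is an interval, $ur$ either lies in $p(\mathsf{X})$ or in $(0,\inf p]$, and both are contained in $\mathsf{W}$.
\item Apply (ii) pointwise in $u$ to bound the integrand by $C_2\, m_{\alpha,\kappa}(ur,ut)$. For $\alpha>0$ we have $m_{\alpha,\kappa} = \kappa^{-\alpha} m_\alpha$, and for $\alpha=0$ the $\kappa$ plays no role. In either case (i) yields $\int_{(0,1)} m_{\alpha,\kappa}(ur,ut)\,\d u \le C_1\, m_{\alpha,\kappa}(r,t)$.
\end{enumerate}
Theorem \ref{Thm: Wasserstein de-initialization} with $\Pi(\mu,n)=p(\mu)\mathtt{L}^{n-1}$, applied to the pair $\mu,\pi$, then gives the claim, because $\pi\mathtt{S}^n=\pi$ and $p(\pi)\mathtt{L}^{n-1}=p(\pi)$.

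\textit{Part b).} By Lemma \ref{L: extended lumping}.b), $\mathtt{L}$ is de-initialized by $\Pi(\nu,n)=\nu R\mathtt{S}^{n-1}$ with $D_n=\delta_{p(\cdot)}$. Wasserstein de-initialization with constant $C_3$ then reduces to $p$ being $C_3$-Lipschitz from $(\mathsf{X},\mathrm{d})$ to $(p(\mathsf{X}),m_{\alpha,\kappa})$. This is exactly the hypothesis, since
\[
m_{\alpha,\kappa}(p(x),p(y)) = |g_{\alpha,\kappa}(\ell(p(x)))-g_{\alpha,\kappa}(\ell(p(y)))|.
\]
Apply Theorem \ref{Thm: Wasserstein de-initialization} to the pair $\nu=p(\mu)$, $\nu'=p(\pi)$. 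Because $R(p(x),\cdot)=\mathtt{S}(x,\cdot)$, we get $p(\mu)R=\mu\mathtt{S}$, and therefore $p(\mu)R\mathtt{S}^{n-1}=\mu\mathtt{S}^n$. Likewise $p(\pi)R\mathtt{S}^{n-1}=\pi$. Finally, $p(\pi)\mathtt{L}^n=p(\pi)$ by invariance.

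I expect the main obstacle to be step 1 of part a). The mixture convexity bound needs a jointly measurable choice of optimal couplings in $u$, and (ii) must be applicable at levels $ur$ that may fall below $\inf p$; the definition of $\mathsf{W}$ is designed to cover exactly the latter.
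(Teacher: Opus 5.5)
Your proposal is correct and follows essentially the same route as the paper. Both arguments apply Lemma \ref{L: extended lumping} with $f=p$, $Q=\mathtt{L}$ and $\mathrm{d}_{\mathsf{Y}}=m_{\alpha,\kappa}$, use the mixture form $R(t,\cdot)=\int_{(0,1)}U_{st}\,\lambda_1(\d s)$ with the convexity bound, then (ii) and (i), and finish with Theorem \ref{Thm: Wasserstein de-initialization} together with invariance and $p(\mu)R=\mu\mathtt{S}$; the paper cites the convexity bound as \cite[Theorem 4.8]{villani2009optimal}, whereas you rebuild it from a measurable selection. Your explicit checks that $sr,st\in\mathsf{W}$ and that (i) transfers from $m_\alpha$ to $m_{\alpha,\kappa}$ via $m_{\alpha,\kappa}=\kappa^{-\alpha}m_\alpha$ fill in details the paper leaves implicit.
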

\begin{proof}
	We apply Lemma \ref{L: extended lumping} for $\mathsf{X} =\mathsf{X}$, $\mathsf{Y} = p(\mathsf{X})$, $K=\mathtt{S}$, $f= p$ and $Q=\mathtt{L}$.
	
	\textit{Ad a):}
	From Lemma \ref{L: extended lumping}.a) we know that the mapping given by $\Pi_{\mathtt{L}}(\mu, n) := p(\mu)\mathtt{L}^{n-1}$ for $\mu \in \mathcal{M}_{\mathsf{X}}$, $n \in \mathbb{N}$ is de-initializing for $\mathtt{S}$.
	We may write the kernel $R$ as
	$R(t, \mathsf{A}) = \int_{(0,1)}\ell(st)^{-1}\varrho\vert_{\mathsf{L}(st)}(\mathsf{A})\ \lambda_1(\d s)$
	for $t \in p(\mathsf{X}), \mathsf{A} \in \mathcal{B}(\mathsf{X})$.
	Hence by \cite[Theorem 4.8]{villani2009optimal}
	\begin{align*}
		&W_{\mathrm{d}}\big( R(r, \cdot), R(t, \cdot)\big)
		\leq \int_{(0,1)} W_{\mathrm{d}}\left(\frac{1}{\ell(sr)} \varrho\vert_{\mathsf{L}(sr)}, \frac{1}{\ell(st)} \varrho\vert_{\mathsf{L}(st)}\right)\ \lambda_1(\d s)\\
		&\qquad\overset{(ii)}{\leq}\int_{(0,1)} C_2\cdot m_{\alpha, \kappa}(sr,st)\ \lambda_1(\d s) 
		\overset{(i)}{\leq} C_2 C_1\cdot m_{\alpha, \kappa}(r,t), \qquad \forall r,t \in p(\mathsf{X}),
	\end{align*}
	such that $\Pi_{\mathtt{L}}$ is also Wasserstein de-initializing for $\mathtt{S}$ with respect to the pseudo metrics $\mathrm{d}$ and $m_{\alpha, \kappa}$.
	
	\textit{Ad b):} From Lemma \ref{L: extended lumping}.b) we know that the mapping given by $\Pi_{\mathtt{S}}(\mu,n):= \mu R\mathtt{S}^{n-1}$ for $\mu \in \mathcal{M}_{p(\mathsf{X})}$, $n \in \mathbb{N}$ is de-initializing for $\mathtt{L}$.
	Our assumption yields $	m_{\alpha, \kappa}\big(p(x), p(y)\big)\leq C_3\cdot \mathrm{d}(x,y)$ for all $ x,y \in \mathsf{X}$,
	such that $\Pi_{\mathtt{S}}$ is also Wasserstein de-initializing for $\mathtt{L}$.
	
	Overall, the Wasserstein inequalities follow from Theorem \ref{Thm: Wasserstein de-initialization} and the invariance of $\mathtt{S}$ and $\mathtt{L}$ with respect to $\pi$ and $p(\pi)$, respectively, and $p(\mu)R = \mu\mathtt{S}$ for $\mu \in \mathcal{M}_{\mathsf{X}}$ in part b).
\end{proof}

To prove Proposition \ref{P: Wasserstein ergodicity of slice sampling}, we combine the previous statement about Wasserstein de-initialization with an explicit calculation of the Wasserstein distance of the lumped kernel $\mathtt{L}$.

\begin{lemma}\label{L: explict formula for m_alpha-Wasserstein distance}
	Assume that the level set function $\ell: p(\mathsf{X}) \to [0, \infty)$ is continuous and strictly decreasing with $\lim_{t \to \|p\|_{\infty}} \ell(t)= 0$.
	Moreover, let $g: \ell(\mathsf{W}) \to \mathbb{R}$ be a strictly increasing, continuously differentiable function,
	with non-zero derivative on the interior of $\ell(\mathsf{W})$, and
	$\lim_{t \mapsto 0} g(t) \cdot t = 0$.
	Assume that the antiderivative $G$ of $g$ also satisfies
	$\lim_{z \to 0} G(z) = 0$.
	Set $m: p(\mathsf{X})^2 \to [0, \infty), (t,r) \mapsto |(g\circ \ell)(t) - (g\circ \ell)(r)|$.
	We have
	\[
	W_m\big(\mathtt{L}(t, \cdot), \mathtt{L}(r,\cdot)\big) 
	= \left|\int_{(0,1)} \frac{G\big(\ell(sr)\big)}{\ell(sr)}  - \frac{G\big(\ell(st)\big)}{\ell(st)} \ \lambda_1(\d s)\right|, \qquad \forall\ t,r \in p(\mathsf{X}).
	\]
\end{lemma}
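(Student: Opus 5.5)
The plan is to reduce everything to a one-dimensional transport problem along the monotone function $h := g\circ \ell$. Since $g$ is strictly increasing and $\ell$ is strictly decreasing, $h$ is strictly decreasing on $p(\mathsf{X})$, and $m(t,r) = |h(t)-h(r)|$. Consequently $W_m(\eta,\xi) = W_{|\cdot|}\big(h(\eta), h(\xi)\big)$ for probability measures $\eta,\xi$ on $p(\mathsf{X})$. For the two measures we are comparing, I will show that they are stochastically ordered. For ordered measures on $\mathbb{R}$ the $1$-Wasserstein distance equals the absolute difference of the means. The upper bound comes from the comonotone (quantile) coupling, under which $h(X)-h(Y)$ has constant sign. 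The lower bound holds for any coupling, since $\mathbb{E}|h(X)-h(Y)|\ge |\mathbb{E}h(X)-\mathbb{E}h(Y)|$. So the claim reduces to two facts: $\mathtt{L}(t,\cdot)$ and $\mathtt{L}(r,\cdot)$ are stochastically ordered, and $\int h\,\d\mathtt{L}(t,\cdot) = \int_{(0,1)} G(\ell(st))/\ell(st)\,\lambda_1(\d s)$.

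\textbf{Step 1 (ordering).} Write $\mathtt{L}(t,\cdot)=\int_{(0,1)} \nu_{st}\,\lambda_1(\d s)$, where $\nu_s := \ell(s)^{-1} p(\varrho)\vert_{(s,\infty)}$, using the substitution $s\mapsto st$. The family $\nu_s$ is stochastically increasing in $s$, because $\nu_s\big((u,\infty)\big) = \ell(u\vee s)/\ell(s)$, which is nondecreasing in $s$ as $\ell$ decreases. Hence for $r\le t$ each mixture component satisfies $\nu_{sr}\preceq \nu_{st}$, so $\mathtt{L}(r,\cdot)\preceq\mathtt{L}(t,\cdot)$. Applying the decreasing map $h$ reverses this order but preserves comparability.

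\textbf{Step 2 (mean computation).} First I compute $\int_{(s,\infty)} g(\ell(u))\,p(\varrho)(\d u)$. Since $p(\varrho)\big((u,\infty)\big) = \ell(u)$ and $\ell$ is continuous with $\ell\to 0$ at $\|p\|_\infty$, the image of $p(\varrho)\vert_{(s,\infty)}$ under $\ell$ is Lebesgue measure on $(0,\ell(s))$. This follows from $\ell(p(\varrho))\big((0,z)\big)=p(\varrho)\big(\{u:\ell(u)<z\}\big)=z$ for $z\le \ell(s)$, by strict monotonicity and continuity. Hence the integral equals $\int_0^{\ell(s)} g(z)\,\d z = G(\ell(s))$, using $G(0^+)=0$. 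Dividing by $\ell(s)$ and averaging over $s\in(0,t)$, after the substitution $s\mapsto st$, gives the stated expression. Subtracting the expressions for $t$ and $r$ and taking absolute values yields the formula.

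\textbf{Main obstacle.} I expect the delicate part to be integrability and the boundary behaviour. The change of variables requires the means to be finite, and requires $\nu_s$ to carry no atom issues at the endpoints. The hypothesis $g(z)z\to 0$ as $z\to 0$, together with $G(0^+)=0$, is what I would use to show that $G(\ell(s))/\ell(s)$ stays bounded near levels where $\ell(s)\to 0$. Near $s\to 0$, $\ell(s)$ tends to $\varrho(\mathsf{X})$, possibly infinite, and there I would control $G(\ell)/\ell$ through the monotonicity of $g$. If finiteness fails on some range, I would run the argument with truncated $h$ and pass to the limit by monotone convergence.
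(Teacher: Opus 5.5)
Your proposal is correct, and it reaches the formula by a different computation than the paper's.

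\textbf{The paper's route.} The paper also starts from Vallender's formula $W_m=\int|H_t-H_r|\,\d x$ for the laws pushed forward by $g\circ\ell$. It then computes the distribution functions explicitly, $H_t(x)=\int_{(0,1)}1\wedge g^{-1}(x)/\ell(st)\,\lambda_1(\d s)$. Using $\ell(st)\le\ell(sr)$ for $t>r$, it splits $|H_t-H_r|$ into explicit indicator pieces. These are evaluated by Fubini, the substitution $y=g^{-1}(x)$ with the inverse function theorem, and integration by parts. This is where continuous differentiability, $g'\neq 0$ and $\lim_{z\to 0}g(z)z=0$ are used. A final monotonicity argument for $z\mapsto G(z)/z$ settles the sign.

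\textbf{Your route.} You get the ordering at the level of the mixture components $\nu_s$; your tail identity $\nu_s\big((u,\infty)\big)=\ell(u\vee s)/\ell(s)$ is the componentwise form of the comparison $H_t\ge H_r$. You replace the integrated CDF difference by a difference of means. Each mean then follows in one line, because $\ell$ pushes $p(\varrho)\vert_{(s,\infty)}$ forward to Lebesgue measure on $(0,\ell(s))$.

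\textbf{What each buys.} Your argument is shorter and needs only that $g$ is monotone with $G(z)=\int_0^z g$. The differentiability assumptions and the condition $\lim_{z\to0}g(z)z=0$ become superfluous; the latter follows from the others anyway. You can also avoid the identity $W_m(\eta,\xi)=W_{|\cdot|}\big((g\circ\ell)(\eta),(g\circ\ell)(\xi)\big)$, which the paper justifies in a separate remark, by using the quantile coupling of the two measures on $p(\mathsf X)\subseteq\mathbb R$ directly. The price is the point you flagged: ``distance $=$ difference of means'' needs finite means. The paper's CDF route, by contrast, works with a nonnegative integrand throughout. When $\varrho(\mathsf X)=\infty$, for instance, $\int_{(0,1)}G(\ell(st))/\ell(st)\,\lambda_1(\d s)$ can genuinely be infinite.

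\textbf{On the infinite-mean case.} Your truncation fix works. A clean way to organise it is to glue componentwise quantile couplings of $\nu_{sr}$ and $\nu_{st}$ over $s$. Under that coupling $h(X)-h(Y)$ has a fixed sign. Each component mean $G(\ell(s))/\ell(s)$ is finite, since $0<\ell(s)\le s^{-1}\int p\,\d\varrho<\infty$ for $s<\|p\|_\infty$. This gives the upper bound as an integral over $s$. The matching lower bound comes from clipped versions of $h$ and monotone convergence.
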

\begin{proof}
	By virtue of \cite{rueschendorf1985Wasserstein} and \cite{vallender1973calculation} (see Remark \ref{R: How to apply Rueschendorf} with $\eta = \mathtt{L}(t, \cdot)$ and $\xi=\mathtt{L}(r, \cdot)$) we know that
	\begin{equation}\label{Eq: explicit expression 1-dim Wasserstein distance}
			W_m\big(\mathtt{L}(t, \cdot), \mathtt{L}(r,\cdot)\big)  
			= \int_{\mathbb{R}} \left| H_t(x) - H_r(x)\right|\ \lambda_1(\d x),\qquad \forall t,r \in p(\mathsf{X}),
	\end{equation}
	where $H_s$ denotes the cumulative distribution function of the pushforward measure  $(g\circ \ell)\big(\mathtt{L}(s, \cdot)\big)$ of $\mathtt{L}(s, \cdot)$ under $g \circ \ell$.
	We first determine $H_t$ for $t \in p(\mathsf{X})$.
	To this end set 
	$c_{\inf}:= \inf_{s \in \ell(p(\mathsf{X}))} g(s)$ and $c_{\sup}:= \sup_{s \in \ell(p(\mathsf{X}))} g(s)$.
	Note that $g$ and $g\circ\ell$ are invertible on $\ell(p(\mathsf{X}))$ and $p(\mathsf{X})$, respectively, due to their strict monotonicity.
		For $x \in (c_{\inf}, c_{\sup})$ we have
	\begin{align*}
		H_t(x) &= \mathtt{L}\big(t,\left[(g \circ \ell)^{-1}(x), \infty\right) \cap p(\mathsf{X})\big)
		= \frac{1}{t} \int_{(0,t)} \frac{1}{\ell(s)} \varrho\Big(\mathsf{L}(s) \cap \mathsf{L}\big((g \circ \ell)^{-1}(x)\big)\Big) \ \lambda_1(\d s)\\
		&= \frac{1}{t} \int_{(0,t)} \frac{1}{\ell(s)} \Big(\ell(s) \wedge \ell\left((g \circ \ell)^{-1}(x)\right)\Big)\ \lambda_1(\d s)
		= \frac{1}{t} \int_{(0,t)} \frac{1}{\ell(s)}\left( \ell(s) \wedge g^{-1}(x)\right)\ \lambda_1(\d s)\\
		&= \frac{1}{t} \int_{(0,t)} 1 \wedge \frac{g^{-1}(x)}{\ell(s)}\ \lambda_1(\d s)
		=   \int_{(0,1)} 1 \wedge \frac{g^{-1}(x)}{\ell(st)}\ \lambda_1(\d s),
	\end{align*}
	where we used that $\{z \in \mathsf{X} \mid p(z)\geq (g \circ \ell)^{-1}(x)\}$ and $\mathsf{L}((g \circ \ell)^{-1}(x))$ differ only by a $\varrho$-null set due to the continuity of $\ell$. 
	Moreover, we have $H_t(x)= \mathtt{L}(t ,\emptyset)= 0$ for $x  < c_{\inf}$, and $H_t(x)= \mathtt{L}(t ,p(\mathsf{X}))= 1$  for $x \geq c_{\sup}$. 
	 
	Fix now $t,r \in \mathsf{W}$ with $t > r$.
	Since $\ell$ is decreasing, this implies $1/\ell(st) \geq 1/\ell(sr)$ for all $s \in (0,1)$.
	Plugging the formula for $H_t$ and $H_r$ into \eqref{Eq: explicit expression 1-dim Wasserstein distance} we get
	\begin{align*}
		&W_m\big(\mathtt{L}(t, \cdot), \mathtt{L}(r,\cdot)\big) 
		= \int_{(c_{\inf}, c_{\sup})} \left| \int_{(0,1)} \left(1 \wedge \frac{g^{-1}(x)}{\ell(st)}\right) - \left(1 \wedge \frac{g^{-1}(x)}{\ell(sr)}\right)\ \lambda_1(\d s)\right|\ \lambda_1(\d x)\\
		&\qquad=
			\begin{multlined}[0.7\textwidth][t]
				\int_{(0,1)} \int_{(c_{\inf}, c_{\sup})} \mathbbm{1}_{\{\frac{g^{-1}(x)}{\ell(sr)} < 1 \leq \frac{g^{-1}(x)}{\ell(st)}\}} \left(1 - \frac{g^{-1}(x)}{\ell(sr)}\right)\\
			+ \mathbbm{1}_{\{\frac{g^{-1}(x)}{\ell(st)} < 1\}}\left(\frac{g^{-1}(x)}{\ell(st)} - \frac{g^{-1}(x)}{\ell(sr)}\right)\ \lambda_1(\d x)\,\lambda_1(\d s).
			\end{multlined}
	\end{align*}
	Computing the inner integrals one by one yields
	\[
		\int_{(c_{\inf}, c_{\sup})} \mathbbm{1}_{\{\frac{g^{-1}(x)}{\ell(sr)} < 1 \leq \frac{g^{-1}(x)}{\ell(st)}\}}\ \lambda_1(\d x)
		=\lambda_1\Big(\big[g(\ell(st)), g(\ell(sr)\big)\Big)
		= g\big(\ell(sr)\big) - g\big(\ell(st)\big),
	\]
	and, using a conversion to the Riemann integral, the substitution $y = g^{-1}(x)$ together with the inverse function theorem for the derivative of $g^{-1}$ and integration by part,
	\begin{align*}
		&\int_{(c_{\inf}, c_{\sup})} \mathbbm{1}_{\{\frac{g^{-1}(x)}{\ell(sr)} < 1 \leq \frac{g^{-1}(x)}{\ell(st)}\}} \frac{g^{-1}(x)}{\ell(sr)}\ \lambda_1(\d x)
		=\frac{1}{\ell(sr)}\int_{g(\ell(st))}^{g(\ell(sr))}g^{-1}(x)\ \d x\\
		&\qquad= \frac{1}{\ell(sr)} \int_{\ell(st)}^{\ell(sr)} y \cdot g'(y) \d y
		= \frac{1}{\ell(sr)} \left( \big[ y \cdot g(y) \big]_{\ell(st)}^{\ell(sr)} - \int_{\ell(st)}^{\ell(sr)} g(y) \d y \right)\\
		&\qquad = g\big(\ell(sr)\big) - \frac{\ell(st)}{\ell(sr)} g\big(\ell(st)\big) - \frac{1}{\ell(sr)} \int_{\ell(st)}^{\ell(sr)} g(y) \d y,
	\end{align*}
	and, with similar arguments using additionally $c_{\inf}=\lim_{z \to 0} g(z)$ and $\lim_{t \mapsto 0} g(t) \cdot t = 0$,
	\begin{align*}
		&\int_{(c_{\inf}, c_{\sup})}\mathbbm{1}_{\{\frac{g^{-1}(x)}{\ell(st)} < 1\}}\frac{g^{-1}(x)}{\ell(st)}\ \lambda_1(\d x)
		=\frac{1}{\ell(st)}\lim_{z \to 0}\int_{g(z)}^{g(\ell(st))}g^{-1}(x)\ \d x\\
		&\qquad= \frac{1}{\ell(st)} \left( \ell(st) \cdot g(\ell(st)) - \lim_{y \to 0} y\cdot g(y) - \int_{0}^{\ell(st)} g(y) \d y \right)\\
		&\qquad= g(\ell(st)) - \frac{1}{\ell(st)} \int_{0}^{\ell(st)} g(y) \d y,
	\end{align*}
	and 
	\[
		\int_{(c_{\inf}, c_{\sup})}\mathbbm{1}_{\{\frac{g^{-1}(x)}{\ell(sr)} < 1\}}\frac{g^{-1}(x)}{\ell(st)}\ \lambda_1(\d x)
		= \frac{\ell(st)}{\ell(sr)}g(\ell(st)) - \frac{1}{\ell(sr)} \int_{0}^{\ell(st)} g(y) \d y.
	\]
	Overall we obtain
	\begin{align*}
		&W_m\big(\mathtt{L}(t, \cdot), \mathtt{L}(r,\cdot)\big)
	 = \int_{(0,1)} \frac{1}{\ell(sr)} \int_{0}^{\ell(sr)} g(y) \d y   - \frac{1}{\ell(st)} \int_{0}^{\ell(st)} g(y) \d y\ \lambda_1(\d s)\\
	 	&\qquad = \int_{(0,1)} \frac{G\big(\ell(sr)\big)}{\ell(sr)}    - \frac{G\big(\ell(st)\big)}{\ell(st)}\ \lambda_1(\d s),
	\end{align*}
	where we use for the second equality $\lim_{z \to 0} G(z) = 0$.
	The statement of the theorem follows, because $\ell(\mathsf{W}) \to \mathbb{R}, z \mapsto G(z)/z$ is monotonically increasing,
	as can be seen from its derivative $z \mapsto (g(z) z - G(z))/z^2$ and $G(z) \leq g(z)z$ for $z \in \ell(\mathsf{W})$ by virtue of the fundamental theorem of calculus and the monotonicity of $g$.
\end{proof}

With the previous lemma, we can show that the level kernel $\mathtt{L}$ is Wasserstein contracting under the assumptions of Proposition \ref{P: Wasserstein ergodicity of slice sampling}.
In combination with the Wasserstein bounds from Lemma \ref{L: Wasserstein de-initilaization for slice sampling}, this yields Proposition \ref{P: Wasserstein ergodicity of slice sampling}.

\begin{proof}[Proof of Proposition \ref{P: Wasserstein ergodicity of slice sampling}]
		Observe that for all choices of $\alpha$ and $\kappa$ the assumptions of Lemma \ref{L: explict formula for m_alpha-Wasserstein distance} for $g= g_{\alpha, \kappa}$ are satisfied.
		Denote by $G_{\alpha, \kappa}$ the antiderivative of $g_{\alpha, \kappa}$.
		We have
		\[
			\frac{G_{\alpha, \kappa}(x)}{x} = 
			\begin{dcases}
				\frac{1}{\alpha +1} g_{\alpha, \kappa}(x),& \alpha \in (0, \infty),\\
				\frac{1}{\alpha +1} g_{\alpha, \kappa}(x) - 1, & \alpha = 0,
			\end{dcases}
			\qquad x\in (0, \infty).
		\]
		Hence, we get by Lemma \ref{L: explict formula for m_alpha-Wasserstein distance}
		\begin{align*}
			&W_{m_{\alpha, \kappa}}\big(\mathtt{L}(t, \cdot), \mathtt{L}(r, \cdot)\big)
			=  \frac{1}{\alpha +1}\left|\int_{(0,1)} g_{\alpha, \kappa}\big(\ell(sr)\big) - g_{\alpha, \kappa}\big(\ell(st)\big) \ \lambda_1(\d s)\right|\\
			&\qquad=  \frac{1}{\alpha +1}\int_{(0,1)} \left|g_{\alpha, \kappa}\big(\ell(sr)\big) - g_{\alpha, \kappa}\big(\ell(st)\big)\right| \ \lambda_1(\d s)
			=  \frac{1}{\alpha +1}\int_{(0,1)} m_{\alpha, \kappa}(sr,st) \ \lambda_1(\d s)\\
			&\qquad\overset{(i)}{\leq} \frac{\beta}{\alpha +1} m_{\alpha, \kappa}(r,t),
			\qquad \qquad \forall r,t \in p(\mathsf{X}),
		\end{align*}
		where for the second equality we exploit that $g_{\alpha, \kappa} \circ \ell$ is strictly decreasing, such that the integrand is either a non-negative or non-positive function.
		This implies $W_{m_{\alpha, \kappa}}\big(\nu\mathtt{L}^n, p(\pi)\big)\leq (\beta/(1+\alpha))^n W_{m_{\alpha,\kappa}}(\nu,p(\pi))$ for all $n \in \mathbb{N}$, $\nu \in \mathcal{M}_{p(\mathsf{X})}$, see e.g.\ \cite[Lemma 20.3.2 and Theorem 20.3.4]{douc2018markov}.
		Therefore by Lemma \ref{L: Wasserstein de-initilaization for slice sampling}
		\begin{align*}
			W_{\mathrm{d}}(\mu\mathtt{S}^n, \pi) 
			&\leq C \beta \cdot W_{m_{\alpha, \kappa}}\big(p(\mu)\mathtt{L}^{n-1}, p(\pi)\big)
			\leq C \beta \left(\frac{\beta}{1+\alpha}\right)^{n-1}\cdot W_{m_{\alpha,\kappa}}\big(p(\mu),p(\pi)\big)\\
			&= C (1+\alpha) \left(\frac{\beta}{1+\alpha}\right)^{n}\cdot W_{m_{\alpha,\kappa}}\big(p(\mu),p(\pi)\big),
			\qquad \forall \mu \in \mathcal{M}_{\mathsf{X}}, n \in \mathbb{N}.
		\end{align*}
\end{proof}

We add several examples how Proposition \ref{P: Wasserstein ergodicity of slice sampling} can be applied.
Throughout the rest of this section, we denote by $B_r(x) := \{y \in \mathsf{X} \mid \mathrm{d}(x,y) < r\}$ the open metric ball with radius $r \in[0, \infty)$ and centre point $x$ in the respective metric spaces.
Moreover, let  $ \omega_{d}$ be the volume of the $d$-dimensional Euclidean unit sphere $\mathbb{S}^d \subseteq\mathbb{R}^{d+1}$.

\begin{example}[Uniform slice sampling on $\mathbb{R}^d$]\label{Ex: Uniform slice sampling on Rd}
	Fix $d \in \mathbb{N}$. We consider uniform slice sampling on $(\mathbb{R}^d, \mathcal{B}(\mathbb{R}^d))$, i.e.\ we use reference measure $\varrho = \lambda_d$,
	and equip $\mathbb{R}^d$ with the Euclidean metric $\|\cdot\|$.
	Let $\Sigma \in \mathbb{R}^{d\times d}$ be a symmetric positive definite matrix and let $x_0 \in \mathbb{R}^d$.
	We consider unnormalized densities of the form
	\[
		p(x) = \exp\big(- \varphi(\|\Sigma^{1/2} x - x_0\|)\big), \qquad x \in \mathbb{R}^d,
	\]
	where $\varphi: [0, \infty) \to \mathbb{R}$ is a continuous, strictly increasing, convex function such that $p$ is $\lambda_d$-integrable.
	This requires in particular that $\lim_{x \to \infty}\varphi(x) = \infty$, and we have $p(\mathbb{R}^d) = \mathsf{W} = (0,e^{-\varphi(0)}]$.
	Then the level sets are of the form
	$\mathsf{L}(t) = \left\{x \in \mathbb{R}^d \mid \|\Sigma^{1/2}x - x_0\| < \varphi^{-1}\big(-\ln(t)\big)\right\}$, $t  \in \mathsf{W}$,
	and we have
	\[
		\ell(t) 
		= \frac{1}{\sqrt{\det(\Sigma)}}\int_{\mathbb{R}^d} \mathbbm{1}_{(0,\varphi^{-1}(-\ln(t)) )}\big(\|x\|\big)\ \lambda_d(\d x)
		= \varphi^{-1}\big(-\ln(t)\big)^d \cdot \frac{\lambda_d(B_1(0))}{\sqrt{\det(\Sigma)}}, \qquad t \in \mathsf{W}.
	\]
	
	Using for $r,t \in \mathsf{W}$ the coupling
	\[
	\begin{multlined}
		\eta(\mathsf{C}) := \frac{1}{\lambda_d(B_1(0))}\int_{B_1(0)}  
		\mathbbm{1}_{\mathsf{C}}\Big(\Sigma^{-1/2}\big(\varphi^{-1}(-\ln(t))x+x_0\big),\Sigma^{-1/2}\big( \varphi^{-1}(-\ln(r))x+x_0\big)\Big)\\
		\times \lambda_d(\d x),
		\qquad \mathsf{C} \in \mathcal{B}(\mathbb{R}^{2d}),
	\end{multlined}
	\]
	of $1/\ell(t)\lambda_d\vert_{\mathsf{L}(t)}$ and $1/\ell(r)\lambda_d\vert_{\mathsf{L}(r)}$,
	we get
	\begin{align*}
		&W_{\|\cdot\|}\left(\frac{1}{\ell(t)}\lambda_d\vert_{\mathsf{L}(t)}, \frac{1}{\ell(r)}\lambda_d\vert_{\mathsf{L}(r)}\right)
		\leq \int_{\mathbb{R}^{2d}} \|x-y\|\ \eta(\d(x,y))\\
		&\qquad\leq |\varphi^{-1}(-\ln(t)) - \varphi^{-1}(-\ln(r))| \frac{1}{\lambda_d(B_1(0))}\int_{B_1(0)}  
							\left\|\Sigma^{-1/2}x\right\|\ \lambda_d(\d x)\\
		&\qquad\leq \frac{\|\Sigma^{-1/2}\|}{\lambda_d(B_1(0))}\int_{B_1(0)}  \left\|x\right\|\ \lambda_d(\d x)\cdot|\varphi^{-1}(-\ln(t)) - \varphi^{-1}(-\ln(r))|\\
		&\qquad \leq \|\Sigma^{-1/2}\| \frac{d}{d+1}\cdot \left|\left(\frac{\sqrt{\det(\Sigma)}}{\lambda_d(B_1(0))} \ell(t)\right)^{1/d} - \left(\frac{\sqrt{\det(\Sigma)}}{\lambda_d(B_1(0))} \ell(r)\right)^{1/d}\right|,
	\end{align*}
	i.e.\ (ii) in Proposition \ref{P: Wasserstein ergodicity of slice sampling} is satisfied for $\alpha = 1/d$, $C = \|\Sigma^{-1/2}\|d/(d+1)$ and 
	$\kappa = \lambda_d(B_1(0))/ \sqrt{\det(\Sigma)}$.
	
	Furthermore, applying Lemma \ref{L: concavity considerations} with $\phi = \varphi^{-1}$ and $h = -\ln$, we get
	\[
		\left|\varphi^{-1}\big(-\ln(sr)\big) - \varphi^{-1}\big(-\ln(st)\big) \right|\leq \left|\varphi^{-1}\big(-\ln(r)\big) - \varphi^{-1}\big(-\ln(t)\big)\right|
	\]
	 for $s \in (0,1), r,t \in \mathsf{W}$.
	Hence
	$\int_{(0,1)} |\ell(sr)^{1/d} - \ell(st)^{1/d}|\ \lambda_1(\d s) \leq |\ell(r)^{1/d} - \ell(t)^{1/d}|$
	and (i) in Proposition \ref{P: Wasserstein ergodicity of slice sampling} holds for $\alpha = 1/d$ and $\beta = 1$.
	
	Since $g_{\alpha, \kappa}\circ \ell \circ p(x) = \|\Sigma^{1/2}x -x_0\|$, $x \in \mathbb{R}^d$,
	is $\|\Sigma^{1/2}\|$-Lipschitz continuous, we get uniform Wasserstein ergodicity by \eqref{Eq: Slice sampler Wasserstein ergodicity} with constants
	\[
		W_{\|\cdot\|}\big(\mu\mathtt{S}^n, \pi\big)\leq \|\Sigma^{1/2}\| \|\Sigma^{-1/2}\|\cdot \left(\frac{d}{d+1}\right)^n\cdot W_{\|\cdot\|}(\mu, \pi), \qquad n \in \mathbb{N}, \mu \in \mathcal{M}_{\mathbb{R}^d}.
	\]
	If $\Sigma$ is the identity matrix, i.e.\ if the unnormalized target density $p$ is isotropic, we recover \cite[Theorem 2.1]{natarovskii2021quantitative}.
\end{example}

\begin{example}[Uniform slice sampling on hyperbolic space]
	Fix $d \geq 2$.
	We consider slice sampling on $d$-dimensional hyperbolic space $(\mathcal{H}_d,\mathcal{B}(\mathcal{H}_d))$, i.e.\ a complete simply connected Riemannian manifold of constant curvature -1, equipped with its Riemannian measure $\nu$ and the metric $\mathrm{d}$ induced by its Riemannian metric, see \cite[Sections II.1, II.5 and IV.1]{sakai1996riemannian}.
	Fix a point $o \in \mathcal{H}_d$ as origin.
	We assume that the level sets of the unnormalized target density $p:\mathcal{H}_d \to [0, \infty)$ are metric balls around $o$.
	To enforce $p > 0$, we use $\mathsf{X} = p^{-1}((0, \infty))$ in the following.
	Using a decreasing function $f: \mathsf{W} \to [0, \infty)$ to describe the radii of the level sets, we obtain from Lemma \ref{L: General level set computations} that $\ell(t) = \omega_{d-1}\Psi(f(t))$, $t \in \mathsf{W}$,
	where $\Psi: [0, \infty)\to [0, \infty), x \mapsto \int_{(0, x)} \sinh^{d-1}(u)\ \lambda_{1}(\d u)$,
	and
	\begin{align*}
		&W_{\mathrm{d}}\left(\frac{1}{\ell(r)}\nu\vert_{\mathsf{L}(r)}, \frac{1}{\ell(t)}\nu\vert_{\mathsf{L}(t)}\right)
		= \left| \frac{\int_{(0,f(r))} u \sinh^{d-1}(u)\ \lambda_1(\d u)}{\Psi(f(r))}
		- \frac{\int_{(0,f(t))} u \sinh^{d-1}(u)\ \lambda_1(\d u)}{\Psi(f(t))} \right|\\
		&\qquad \overset{\text{Lemma } \ref{L: bound for balls in hyperbolic space}}{\leq}
		\left|\ln\big(\Psi(f(r))\big)- \ln\big(\Psi(f(t))\big)\right|
		= \left|\ln(\ell(r))- \ln(\ell(t))\right|, \qquad \forall r,t \in \mathsf{W}.
	\end{align*}
	Hence, (ii) in Proposition \ref{P: Wasserstein ergodicity of slice sampling} holds with $\alpha = 0$ and $C = 1$.
	
	If the unnormalized density takes the form $p(x) =\varphi(\mathrm{d}(x,o)) \cdot \mathbbm{1}_{B_c(o)}(x)$ for some $c \in [0, \infty)$ and a strictly decreasing, continuous, convex function $\varphi: [0, c) \to [0, \infty)$ with $\lim_{x \to 0} \varphi(x)= \infty$ and $\lim_{x \to c} \varphi(x) = 0$,
	then $f= \varphi^{-1}$ and $p(\mathsf{X}) = \mathsf{W} = (0, \infty)$.
	Since $\Psi$ is log-concave (see \cite[Theorem 2(i)]{zhang2012some}) and increasing, and $\varphi^{-1}$ is concave, also $\ln \circ \Psi \circ \varphi^{-1}$ is concave.
	From Lemma \ref{L: concavity considerations} (with $\phi = \ln \circ \Psi \circ \varphi^{-1}$, $h(x)= x$) we get
	\[
		\int_{(0,1)} \left|\ln\big(\ell(st)\big) - \ln\big(\ell(st)\big)\right|\ \lambda_1(\d s)
		\leq
		\frac{1}{2} \cdot \left|\ln\big(\ell(t)\big) - \ln\big(\ell(t)\big)\right|, \qquad \forall r,t \in (0, \infty),
	\] 
	such that (i) in Proposition \ref{P: Wasserstein ergodicity of slice sampling} is satisfied with $\alpha = 0$ and $\beta =1/2$.
	We get $W_{\mathrm{d}}(\mu \mathtt{S}^n, \pi) \leq (1/2)^n W_{m_0}(p(\mu), p(\pi))$ for all $\mu \in \mathcal{M}_{\mathsf{X}}$, $n \in \mathbb{N}$.
	Note that in this case Proposition \ref{P: Wasserstein ergodicity of slice sampling} likely gives us a non-uniform bound, because $g_{0,1}\circ \ell \circ p $ is not Lipschitz.
\end{example}

\begin{example}[Uniform slice sampling on $\mathbb{S}^{d}$]\label{Ex: Uniform slice sampling on the sphere}
	For $d \geq 2$, we consider slice sampling on $(\mathbb{S}^{d}, \mathcal{B}(\mathbb{S}^{d}), \sigma)$, where $\sigma$ is the volume measure on $\mathbb{S}^{d}$.
	We use the great circle distance $\mathrm{d}(x,y) := \arccos(x^Ty)$, $x,y \in \mathbb{S}^d$, where $x^T$ denotes the transpose of the vector $x$.
	Fix a point $o \in \mathbb{S}^d$, that serves as origin, and an unnormalized target density $p: \mathbb{S}^d \to (0, \infty)$.
	In the following, we assume that the level sets of the target distribution $\pi(\d x) \propto p(x) \sigma(\d x)$ are metric balls around $o$,
	whose radii are described by a function $f:\mathsf{W} \to [0,\pi)$.
	Set $\Psi(t):= \int_{(0,t)} \sin^{d-1}(u)\ \lambda_1(\d u)$ for $t \in [0, \pi)$.
	Applying Lemma \ref{L: General level set computations},
	we get $\ell(t) = \omega_{d-1}\Psi(f(t))$, $t \in [0, \pi)$, and
	\begin{align*}
		&W_{\mathrm{d}}\left(\frac{1}{\ell(t)}\sigma\vert_{\mathsf{L}(t)}, \frac{1}{\ell(r)}\sigma\vert_{\mathsf{L}(r)}\right)
		\leq \left|\frac{\int_0^{f(t)} u \sin^{d-1}(u)\ \d u}{\Psi(f(t))} - \frac{\int_0^{f(r)} u \sin^{d-1}(u)\ \d u}{\Psi(f(r))}\right|\\
		&\qquad\overset{\text{Lemma } \ref{L: bounds for balls on the sphere}}{\leq} \frac{d\pi\sqrt{2}}{2^{3/(2d)}} \cdot\left|\Psi\big(f(t)\big)^{1/d} - \Psi\big(f(r)\big)^{1/d}\right|
		= \frac{d\pi\sqrt{2}}{2^{3/(2d)}} \cdot\left|\left(\frac{\ell(t)}{\omega_{d-1}}\right)^{1/d}  - \left(\frac{\ell(r)}{\omega_{d-1}}\right)^{1/d} \right|
	\end{align*}
	for all $r,t \in \mathsf{W}$.
	Hence, (ii) in Proposition \ref{P: Wasserstein ergodicity of slice sampling} holds with $\alpha = 1/d$ and $C  = d\pi2^{1/2 - 3/(2d)}$.
	
	Assume the unnormalized target density is of the from $p(x)= \varphi(\mathrm{d}(x,o))$, $x \in \mathbb{S}^d$, for some continuous, strictly decreasing convex function $\varphi: [0,\pi] \to [0, \infty)$.
	Set $b := \varphi(0)$ and $a := \varphi(\pi)$.
	Then 
	\[
		\left(\frac{\ell(t)}{\omega_{d-1}}\right)^{1/d} = \begin{dcases}
															\Psi\big(\varphi^{-1}(t)\big)^{1/d},& t \in [a,b],\\
															\Psi(\pi)^{1/d}, &, t \in (0, a).
		\end{dcases}
	\]
	This function is concave on $[a,b]$, because $x \mapsto \Psi(x)^{1/d}$ is concave and increasing (see Lemma \ref{L: bounds for balls on the sphere}) and $\varphi^{-1}$ is also concave.
	Extending this decreasing function to the left by its maximum preserves the concavity.
	We get from Lemma \ref{L: concavity considerations} (with $\phi(x) =(\ell(x)/\omega_{d-1})^{1/d}$ and $h(x)= x$)
	\begin{align*}
		&\int_{(0,1)} \left|\left(\frac{\ell(st)}{\omega_{d-1}}\right)^{1/d} - \left(\frac{\ell(sr)}{\omega_{d-1}}\right)^{1/d} \right| \ \lambda_1(\d s)
		\leq  \left|\left(\frac{\ell(t)}{\omega_{d-1}}\right)^{1/d} - \left(\frac{\ell(r)}{\omega_{d-1}}\right)^{1/d} \right| \int_{(a/(r \wedge t), 1)} s\ \lambda_1(\d s) \\
		&\qquad\leq \frac{1}{2}\left(1-\frac{a^2}{b^2}\right) \cdot \left|\left(\frac{\ell(t)}{\omega_{d-1}}\right)^{1/d} - \left(\frac{\ell(r)}{\omega_{d-1}}\right)^{1/d} \right|, \qquad \forall r,t\in [a,b],
	\end{align*}
	such that Proposition \ref{P: Wasserstein ergodicity of slice sampling}.(i) is satisfied for $\alpha = 1/d$ and $\beta =(1-a^2/b^2)/2$.
	By the Lipschitzness statement in Lemma \ref{L: bounds for balls on the sphere},
	we get from \eqref{Eq: Slice sampler Wasserstein ergodicity} 
	\[
		W_{\mathrm{d}}\big(\mu \mathtt{S}^n, \pi\big) \leq \frac{\pi \sqrt{2} (d+1)}{(d 2^{3/2})^{1/d}} \cdot \left(\frac{1}{2} \cdot\left(1 - \frac{a^2}{b^2}\right)\cdot \frac{d}{d+1}\right)^n
		W_{\mathrm{d}}\big(\mu , \pi), \qquad \forall n \in \mathbb{N}, \mu \in \mathcal{M}_{\mathbb{S}^d}.
	\]
 	This means for this class of distributions we get a dimension independent Wasserstein convergence rate of at least $1/2$.
 	Additionally, the closer the ratio $a/b$ is to one, the smaller is the factor $1-a^2/b^2$, giving an even smaller (dimension independent) Wasserstein convergence rate.
 	Intuitively, this factor could be interpreted as capturing how close $\pi$ is to the uniform distribution on $\mathbb{S}^d$ in terms of density range.
 	Of course in the leading constant we do observe a dependence on the dimension, which is approximately linear.
\end{example}

The application of Proposition \ref{P: Wasserstein ergodicity of slice sampling} is not limited to unimodal distributions, as the following example demonstrates.

\begin{example}[Bimodal distribution on the sphere]
	Consider again uniform slice sampling on $(\mathbb{S}^{d}, \mathcal{B}(\mathbb{S}^{d}), \sigma)$ as described in Example \ref{Ex: Uniform slice sampling on the sphere}.
	Let the unnormalized target density be of the form $p(x) = \exp(\varphi(|x^To|))$, where $\varphi: [0,1] \to \mathbb{R}$ is a bounded, continuous, strictly decreasing, concave function and $o \in \mathbb{S}^d$.
	A distribution $\pi$ with $\pi(\d x) \propto p(x) \sigma(\d x)$ has two modes at the antipodal points $o$ and $-o$.
	Observe that $\mathsf{W} = (0, e^{\varphi(0)}]$.
	Set 
	\[
		f: (0, e^{\varphi(0)}] \to \left[0, \frac{\pi}{2}\right], \qquad t \mapsto \begin{dcases}
																					\arccos\Big(\varphi^{-1}\big(\ln(t)\big)\Big), & t \in  [e^{\varphi(1)}, e^{\varphi(0)}]\\
																			\frac{\pi}{2}, & t \in  (0, e^{\varphi(1)}).
		\end{dcases}
	\]
	Rewriting $p$ in terms of the spherical distance, we get that the level sets are of the form $\mathsf{L}(t) = B_{f(t)}(o) \cup B_{f(t)}(-o)$  for $t \in \mathsf{W}$.
	Hence, Lemma \ref{L: General level set computations} yields $\ell(t) = 2 \omega_{d-1} \Psi(f(t))$ for $t \in \mathsf{W}$, 
	where $\Psi$ is defined as in Example \ref{Ex: Uniform slice sampling on the sphere}.
	Exploiting that we can rewrite $1/\ell(t) \sigma\vert_{\mathsf{L}}(t) = 1/\ell(t) \sigma\vert_{B_{f(t)}(o)} + 1/\ell(t) \sigma\vert_{B_{f(t)}(-o)}$,
	we get that $1/2(\eta_1 + \eta_2)$ is a coupling of $1/\ell(t) \sigma\vert_{\mathsf{L}(t)}$ and $1/\ell(r) \sigma\vert_{\mathsf{L}(r)}$ with $\eta_i \in \mathcal{C}(2/\ell(t) \sigma\vert_{B_{f(t)}((-1)^io)},2/\ell(r) \sigma\vert_{B_{f(t)}((-1)^io)}) $ being optimal for $i \in \{1,2\}$.
	Combining this with Lemma \ref{L: General level set computations} and Lemma \ref{L: bounds for balls on the sphere} as in Example \ref{Ex: Uniform slice sampling on the sphere} this yields
	\[
		W_{\mathrm{d}}\left(\frac{1}{\ell(t)}\sigma\vert_{\mathsf{L}(t)}, \frac{1}{\ell(r)}\sigma\vert_{\mathsf{L}(r)}\right)
		\leq\frac{d\pi\sqrt{2}}{2^{3/(2d)}} \cdot\left|\left(\frac{\ell(t)}{2\omega_{d-1}}\right)^{1/d}  - \left(\frac{\ell(r)}{2\omega_{d-1}}\right)^{1/d} \right|,
		\qquad \forall r,t \in \mathsf{W}.
	\]
	Since $\arccos$ is concave and decreasing on $[0,1]$ and $\varphi^{-1}$ is convex by assumption, $\arccos\circ\varphi^{-1}$ is concave and decreasing, and may be extended to $\ln(\mathsf{W})$ similar as in Example \ref{Ex: Uniform slice sampling on the sphere}.
	Hence the concatenation with the concave increasing function $x \mapsto \Psi(x)^{1/d}$ (see Lemma \ref{L: bounds for balls on the sphere}) is concave and decreasing.
	Therefore, an application of Lemma \ref{L: concavity considerations} (with $\phi(x) = 2\Psi(f(e^{x}))^{1/d}$ and $h = \ln$) yields
	\begin{align*}
		&\int_{(0,1)} \left|\left(\frac{\ell(st)}{2\omega_{d-1}}\right)^{1/d} - \left(\frac{\ell(sr)}{2\omega_{d-1}}\right)^{1/d} \right| \ \lambda_1(\d s)
		\leq  \left|\left(\frac{\ell(t)}{2\omega_{d-1}}\right)^{1/d} - \left(\frac{\ell(r)}{2\omega_{d-1}}\right)^{1/d} \right| \lambda_1\bigg(\Big(\frac{e^{\varphi(1)}}{r \wedge t}, 1\Big)\bigg) \\
		&\qquad\leq \left(1 - e^{\varphi(1) - \varphi(0)}\right) \cdot \left|\left(\frac{\ell(t)}{2\omega_{d-1}}\right)^{1/d} - \left(\frac{\ell(r)}{2\omega_{d-1}}\right)^{1/d} \right|, \qquad \forall r,t\in [e^{\varphi(1)}, e^{\varphi(0)}].
	\end{align*}
	Overall, we obtain that Proposition \ref{P: Wasserstein ergodicity of slice sampling} is satisfied with $\alpha = 1/d$, $\beta = 1 - e^{\varphi(1) - \varphi(0)}$, $\kappa = 2\omega_{d-1}$ and $C= d\pi2^{1/2-3/(2d)}$.
	As in Example \ref{Ex: Uniform slice sampling on the sphere}, we get from \eqref{Eq: Slice sampler Wasserstein ergodicity} 
	\[
		W_{\mathrm{d}}\big(\mu \mathtt{S}^n, \pi\big) \leq \frac{\pi \sqrt{2} (d+1)}{(d 2^{3/2})^{1/d}} \cdot \left(\left(1 - e^{\varphi(1) - \varphi(0)}\right)\cdot \frac{d}{d+1}\right)^n
		W_{\mathrm{d}}\big(\mu , \pi), \qquad \forall n \in \mathbb{N}, \mu \in \mathcal{M}_{\mathbb{S}^d}.
	\]
\end{example}

\appendix
\section*{Appendix}

The appendix collects auxiliary technical statements.

\section{Auxiliary lemma for Section \ref{S: Gibbs sampler}}

\begin{lemma}\label{L: Tensor product Wasserstein distance}
	Let $(\mathsf{A}, \mathcal{B}(\mathsf{A}))$, $(\mathsf{B}, \mathcal{B}(\mathsf{B}))$ and $(\mathsf{C}, \mathcal{B}(\mathsf{C}))$ be three Polish spaces equipped with lower semi-continuous metrics $\mathrm{d}_{\mathsf{A}}$, $\mathrm{d}_{\mathsf{B}}$ and $\mathrm{d}_{\mathsf{C}}$, respectively.
	Fix $\mathfrak{p} \in [1, \infty]$, and denote by $\mathrm{d}_{\mathsf{B} \times \mathsf{C}}$ the $\mathfrak{p}$-product metric of $\mathrm{d}_{\mathsf{B}}$ and $\mathrm{d}_{\mathsf{C}}$ on $\mathsf{B} \times\mathsf{C}$.
	If $S: \mathsf{A} \times \mathcal{B}(\mathsf{B}) \to [0,1]$ and $T:\mathsf{B} \times \mathcal{B}(\mathsf{C})\to[0,1]$ are Markov kernels
	such that there exist $q \in [1, \infty)$ and $K_1, K_2 \in [0, \infty)$ with
	\begin{equation}\label{Eq: tensor product Wasserstein distance condition}
		W_{\mathrm{d}_{\mathsf{B}}}^{(q)}\big(S(x_1, \cdot), S(x_2, \cdot)\big)\leq K_1 \cdot \mathrm{d}_{\mathsf{A}}(x_1, x_2), \quad \text{and} \quad
		W_{\mathrm{d}_{\mathsf{C}}}^{(q)}\big(T(y_1, \cdot), T(y_2, \cdot)\big)\leq K_2 \cdot \mathrm{d}_{\mathsf{B}}(y_1, y_2)
	\end{equation}
	for all $x_1,x_2 \in \mathsf{A}$, $y_1,y_2 \in \mathsf{B}$,
	then we have
	\[
		W_{\mathrm{d}_{\mathsf{B} \times \mathsf{C}}}^{(q)}\big((S \otimes T)(x_1, \cdot), (S\otimes T)(x_2, \cdot)\big)\leq
		K_1 (K_2 + 1)\cdot \mathrm{d}_{\mathsf{A}}(x_1,x_2), \qquad \forall x_1,x_2 \in \mathsf{A}.
	\]
\end{lemma}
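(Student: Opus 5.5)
We build an explicit coupling of $(S\otimes T)(x_1,\cdot)$ and $(S\otimes T)(x_2,\cdot)$ by gluing optimal couplings, in the same way as in the proof of Theorem \ref{Thm: Wasserstein de-initialization}. Fix $x_1,x_2\in\mathsf{A}$ and let $\gamma$ be an optimal coupling of $S(x_1,\cdot)$ and $S(x_2,\cdot)$ for $W^{(q)}_{\mathrm{d}_{\mathsf{B}}}$. By \cite[Theorem 1.1]{zhang2000existence} there is a Markov kernel $\Xi:\mathsf{B}^2\times\mathcal{B}(\mathsf{C}^2)\to[0,1]$ such that $\Xi((y_1,y_2),\cdot)$ is an optimal coupling of $T(y_1,\cdot)$ and $T(y_2,\cdot)$ for $W^{(q)}_{\mathrm{d}_{\mathsf{C}}}$. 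Define $\eta:=\gamma\otimes\Xi$ on $\mathsf{B}^2\times\mathsf{C}^2$, reordered as a measure on $(\mathsf{B}\times\mathsf{C})^2$ via $((y_1,y_2),(z_1,z_2))\mapsto((y_1,z_1),(y_2,z_2))$. Checking the marginals on test sets $\mathsf{D}\subseteq\mathsf{B}\times\mathsf{C}$ is routine: the first marginal is $\int\int \mathbbm{1}_{\mathsf{D}}(y_1,z_1)\,T(y_1,\d z_1)\,S(x_1,\d y_1)=(S\otimes T)(x_1,\mathsf{D})$, and the second marginal is handled the same way.

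Next we bound the cost. By the upper bound in \eqref{Eq: product metric ineqaulity}, $\mathrm{d}_{\mathsf{B}\times\mathsf{C}}((y_1,z_1),(y_2,z_2))\le \mathrm{d}_{\mathsf{B}}(y_1,y_2)+\mathrm{d}_{\mathsf{C}}(z_1,z_2)$. Minkowski's inequality in $L^q(\eta)$ then gives
\[
W^{(q)}_{\mathrm{d}_{\mathsf{B}\times\mathsf{C}}}\le \Big(\int \mathrm{d}_{\mathsf{B}}^q\,\d\gamma\Big)^{1/q}+\Big(\int\int \mathrm{d}_{\mathsf{C}}(z_1,z_2)^q\,\Xi((y_1,y_2),\d(z_1,z_2))\,\gamma(\d(y_1,y_2))\Big)^{1/q}.
\]
The first term equals $W^{(q)}_{\mathrm{d}_{\mathsf{B}}}(S(x_1,\cdot),S(x_2,\cdot))\le K_1\mathrm{d}_{\mathsf{A}}(x_1,x_2)$. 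In the second term the inner integral equals $W^{(q)}_{\mathrm{d}_{\mathsf{C}}}(T(y_1,\cdot),T(y_2,\cdot))^q\le K_2^q\,\mathrm{d}_{\mathsf{B}}(y_1,y_2)^q$ by \eqref{Eq: tensor product Wasserstein distance condition}. So the second term is at most $K_2$ times the first, that is, at most $K_2K_1\mathrm{d}_{\mathsf{A}}(x_1,x_2)$. Adding the two bounds gives $K_1(K_2+1)\,\mathrm{d}_{\mathsf{A}}(x_1,x_2)$.

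The main obstacle is measurability. The optimal couplings must be selected measurably in $(y_1,y_2)$ so that $\gamma\otimes\Xi$ is a well-defined probability measure. This is exactly what \cite{zhang2000existence} provides, given lower semi-continuity of the cost $\mathrm{d}_{\mathsf{C}}^q$ and measurability of $y\mapsto T(y,\cdot)$; the paper already uses this result in the proof of Theorem \ref{Thm: Wasserstein de-initialization}. If one prefers to avoid it, a fallback is to use $\varepsilon$-optimal couplings chosen measurably and let $\varepsilon\to0$. The remaining steps are elementary.
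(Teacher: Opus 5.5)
Your proposal is correct and follows the paper's proof essentially step for step. The paper also glues an optimal coupling of $S(x_1,\cdot)$ and $S(x_2,\cdot)$ with a measurable kernel $\Xi$ of optimal couplings obtained from \cite[Theorem 1.1]{zhang2000existence}, bounds the $\mathfrak{p}$-product metric by the sum of the component metrics, and applies Minkowski's inequality to conclude with $(K_2+1)\,W^{(q)}_{\mathrm{d}_{\mathsf{B}}}\big(S(x_1,\cdot),S(x_2,\cdot)\big)\le K_1(K_2+1)\,\mathrm{d}_{\mathsf{A}}(x_1,x_2)$.
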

\begin{proof}
	Fix $x_1,x_2 \in \mathsf{A}$. Let $\eta \in \mathcal{C}(S(x_1, \cdot), S(x_2, \cdot))$ be on optimal coupling with respect to $W_{\mathrm{d}_{\mathsf{B}}}^{(q)}$.
	Moreover by \cite[Theorem 1.1]{zhang2000existence}, let $\Xi: \mathsf{B}^2 \times \mathcal{B}(\mathsf{C}^2) \to [0,1]$
	be a Markov kernel such that for all $y_1,y_2 \in \mathsf{B}$ the probability measure $\Xi((y_1,y_2), \cdot)$ is an optimal coupling of $T(y_1, \cdot)$ and $T(y_2, \cdot)$ with respect to $W_{\mathrm{d}_{\mathsf{C}}}^{(q)}$.
	Set
	\[
	\xi(\mathsf{D}) := \int_{\mathsf{B}^2} \int_{\mathsf{C}^2} \mathbbm{1}_{\mathsf{D}}\big((y_1,z_1,y_2,z_2)\big)\ \Xi\big((y_1,y_2), \d (z_1, z_2)\big)\, \eta\big(\d(y_1,y_2)\big), \qquad \mathsf{D} \in \mathcal{B}\big((\mathsf{B}\times\mathsf{C})^2\big).
	\]
	We have $\xi \in \mathcal{C}((S \otimes T)(x_1, \cdot),(S\otimes T)(x_2, \cdot))$.
	Hence using Minkowski inequality and the optimality of $\eta$ and $\Xi((y_1,y_2), \cdot)$
	\begin{align*}
		&W_{\mathrm{d}_{\mathsf{B} \times \mathsf{C}}}^{(q)}\big((S \otimes T)(x_1, \cdot), (S\otimes T)(x_2, \cdot)\big)\\
		&\qquad \overset{\eqref{Eq: product metric ineqaulity}}{\leq} \left(\int_{(\mathsf{B}\times\mathsf{C})^2} \left(\mathrm{d}_{\mathsf{B}}(y_1, y_2) + \mathrm{d}_{\mathsf{C}}(z_1, z_2) \right)^{q}\
		\xi\big(\d(y_1,z_1,y_2,z_2)\big)\right)^{1/q}\\
		& \qquad \leq  
			\left(\int_{\mathsf{B}^2} \mathrm{d}_{\mathsf{B}}(y_1, y_2)^{q}\
			\ \eta\big(\d(y_1,y_2)\big)\right)^{1/q}
			+  \left(\int_{\mathsf{B}^2} W_{\mathrm{d}_{\mathsf{C}}}^{(q)}\big(T(y_1, \cdot), T(y_2, \cdot)\big)^{q}\ \eta\big(\d(y_1,y_2)\big)\right)^{1/q}\\
		& \qquad \overset{\eqref{Eq: tensor product Wasserstein distance condition}}{\leq} W_{\mathrm{d}_{\mathsf{B}}}^{(q)} \big(S(x_1, \cdot), S(x_2, \cdot)\big)
			+  K_2\cdot\left(\int_{\mathsf{B}^2}\mathrm{d}_{\mathsf{B}}(y_1,y_2)^{q}\ \eta\big(\d(y_1,y_2)\big)\right)^{1/q}\\
		&\qquad= (K_2 + 1) W_{\mathrm{d}_{\mathsf{B}}}^{(q)} \big(S(x_1, \cdot), S(x_2, \cdot)\big)
		\overset{\eqref{Eq: tensor product Wasserstein distance condition}}{\leq} K_1 (K_2 + 1) \mathrm{d}_{\mathsf{A}}(x_1,x_2).
	\end{align*}
\end{proof}

\section{Auxiliary statements for Section \ref{S: Slice sampling}}

\begin{remark}\label{R: How to apply Rueschendorf}
	In this remark we discuss how to apply \cite[Theorem 2.a)]{rueschendorf1985Wasserstein} to obtain \eqref{Eq: Slice sampler Wasserstein ergodicity} and \eqref{Eq: explicit expression 1-dim Wasserstein distance}.
	For simplicity, we use $g$ and $m$ to denote both the functions $g_{\alpha, \kappa}$ and $g$, and $m_{\alpha, \kappa}$ and $m$ from Proposition \ref{P: Wasserstein ergodicity of slice sampling} and Lemma \ref{L: explict formula for m_alpha-Wasserstein distance}, respectively.

	Let $\eta, \xi \in \mathcal{M}_{\mathbb{R}}$ be concentrated on $p(\mathsf{X})$.
	Our aim is to establish the identity
	\begin{equation}\label{Eq: How to apply Rueschendorf}
		W_{m}\big(\eta, \xi\big) = W_{|\cdot|} \Big((g\circ \ell)(\eta), (g\circ \ell)(\xi)\Big).
	\end{equation}
	Observe that a direct application of \cite[Theorem 2.a)]{rueschendorf1985Wasserstein} to obtain \eqref{Eq: How to apply Rueschendorf} would require $g\circ \ell$ to have domain $\mathbb{R}$.
	Instead, we artificially extend $g \circ \ell$ to the function
	$h: \mathbb{R} \to \mathbb{R}$ with $ h= (g \circ \ell)$ on $p(\mathsf{X})$ and $h \equiv 1$ on $\mathbb{R}\setminus p(\mathsf{X})$,
	and define $\tilde{m}(x,y):= |h(x) - h(y)|, x,y \in \mathbb{R}$.
	Because any coupling of $\eta$ and $\xi$ is concentrated on $p(\mathsf{X})^2$,
	we have $W_{m}(\eta, \xi) = W_{\tilde{m}}(\eta, \xi)$.
	Moreover, 
	\[
		h^{-1}\big((- \infty,x]\big)
		= \big\{s \in p(\mathsf{X}) \mid  (g \circ \ell)(s) \leq x \big\} \cup \{s \in \mathbb{R} \setminus p(\mathsf{X}) \mid 1 \leq x \},
		\qquad x \in \mathbb{R}.
	\]
	Observe that $\mathbb{R}\setminus p(\mathsf{X})$ is a null set for $\eta$,
	such that the cumulative distribution function of $(g\circ \ell)(\eta)$ and $h(\eta)$ coincide.
	The same holds for $\xi$.
	Combining \cite[Theorem 2.a)]{rueschendorf1985Wasserstein} for $h$ and $\tilde{m}$ with these identities yields \eqref{Eq: How to apply Rueschendorf}.
\end{remark}

As before, denote by $\mathbb{S}^{d}$ the $d$-dimensional Euclidean unit sphere, $d$-dimensional hyperbolic space as $\mathcal{H}_d$,
and by $\omega_{d}$ the volume of $\mathbb{S}^d$.
Moreover, we use $B_r(x)$ to denote the open metric ball with radius $r \in[0, \infty)$ and centre point $x$ in the respective metric space. 
.

\begin{lemma}\label{L: General level set computations}
	Let $\mathsf{X} \in \{\mathbb{R}^d, \mathbb{S}^{d}, \mathcal{H}_d\}$ for some $d \in \mathbb{N}$,
	equipped with the metric $\mathrm{d}$ induced by the Riemannian metric.
	Let $o \in \mathsf{X}$,
	and denote by $\nu$ the Riemannian measure on $(\mathsf{X},\mathcal{B}(\mathsf{X}))$.
	Moreover set
	\[
	c := \begin{cases}
		\infty, & \mathsf{X} \in \{\mathbb{R}^d, \mathcal{H}_d\},\\
		\pi, & \mathsf{X} = \mathbb{S}^{d}.
	\end{cases}	
	\qquad \text{and} \qquad
	s_{\mathsf{X}}(u) := \begin{dcases}
		u^{d-1}, & \mathsf{X} = \mathbb{R}^d,\\
		\sinh(u)^{d-1}, & \mathsf{X} = \mathcal{H}_d,\\
		\sin(u)^{d-1}, & \mathsf{X} = \mathbb{S}^{d},
	\end{dcases}
	\]
	and $\Psi(x) := \int_{(0,x)} s_{\mathsf{X}}(u)  \ \lambda_1(\d u)$ for $x \in [0,c)$.
	Then $k(r):=\nu(B_{r}(o)) = \Psi(r)\omega_{d-1}$ for all $r \in [0,c)$, and for all $r_1, r_2 \in [0,c)$ holds
	\[
	W_{\mathrm{d}}\left(\frac{1}{k(r_1)}\nu\vert_{B_{r_1}(o)}, \frac{1}{k(r_2)}\nu\vert_{B_{r_2}(o)}\right)
	\leq \left| \frac{\int_{(0,r_1)} u s_{\mathsf{X}}(u)\ \lambda_1(\d u)}{\Psi(r_1)}
	- \frac{\int_{(0,r_2)} u s_{\mathsf{X}}(u)\ \lambda_1(\d u)}{\Psi(r_2)} \right|.
	\]
\end{lemma}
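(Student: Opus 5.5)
The plan is to use geodesic polar coordinates around $o$ and to build an explicit radial coupling.

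First I establish the volume formula. In all three model spaces the Riemannian measure in polar coordinates about $o$ factorises as $\nu(\d x) = s_{\mathsf{X}}(u)\,\lambda_1(\d u)\,\sigma_{d-1}(\d \theta)$. Here $x = \exp_o(u\theta)$ with $u \in (0,c)$ and $\theta$ in the unit sphere of $T_o\mathsf{X}$, and $\sigma_{d-1}$ is the standard surface measure of total mass $\omega_{d-1}$. This is the standard Jacobi field computation for constant curvature $0,-1,1$, as in \cite{sakai1996riemannian}. Since $\mathrm{d}(o,\exp_o(u\theta)) = u$ for $u<c$, integrating over $u \in (0,r)$ gives $k(r) = \omega_{d-1}\Psi(r)$. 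The cut locus, which for the sphere is the single point $-o$, is a null set.

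Second, I describe the normalized uniform distribution $\nu_r := k(r)^{-1}\nu\vert_{B_r(o)}$ in these coordinates. Its angular part is uniform on the sphere, and it is independent of the radial part. The radial part has distribution function $F_r(u) = \Psi(u)/\Psi(r)$ on $(0,r)$.

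Third, I construct the coupling. Let $U$ be uniform on $(0,1)$ and $\theta$ uniform on the sphere, independent. Set $X_i := \exp_o(F_{r_i}^{-1}(U)\theta)$ for $i=1,2$. Then $X_i \sim \nu_{r_i}$. The two points lie on the same geodesic ray from $o$, so $\mathrm{d}(X_1,X_2) = |F_{r_1}^{-1}(U) - F_{r_2}^{-1}(U)|$; this uses that the ray is minimizing up to distance $c$. Hence
\[
W_{\mathrm{d}}(\nu_{r_1},\nu_{r_2}) \le \mathbb{E}\big|F_{r_1}^{-1}(U)-F_{r_2}^{-1}(U)\big|.
\]

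The main step is to get rid of the absolute value. Suppose $r_1\le r_2$. Then $F_{r_1}(u) = \Psi(u)/\Psi(r_1) \ge \Psi(u)/\Psi(r_2) = F_{r_2}(u)$ on $(0,r_1)$, and $F_{r_1} = 1$ on $[r_1,r_2)$. So $F_{r_1}\ge F_{r_2}$ everywhere, which means the radial laws are stochastically ordered and $F_{r_1}^{-1}\le F_{r_2}^{-1}$ pointwise. The expectation therefore equals $\mathbb{E}[F_{r_2}^{-1}(U)] - \mathbb{E}[F_{r_1}^{-1}(U)]$. Each of these is the mean radius $\int_{(0,r_i)} u\, s_{\mathsf{X}}(u)\,\lambda_1(\d u)/\Psi(r_i)$. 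This gives the claimed bound; the case $r_2 \le r_1$ follows by symmetry. The degenerate case $r_i=0$ (the empty ball) is handled by convention, or excluded.
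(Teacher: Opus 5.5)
Your proposal is correct and follows essentially the same route as the paper. The paper uses the polar-coordinate volume formula and the radial coupling $\mathrm{Exp}_o\big(\Psi^{-1}(\Psi(r_i)s)\theta\big)$, which is exactly your $\exp_o(F_{r_i}^{-1}(U)\theta)$. It then drops the absolute value by monotonicity of $\Psi^{-1}$ and evaluates $\int_{(0,1)}\Psi^{-1}(\Psi(r)s)\,\lambda_1(\d s)$ by substitution, which is the same as your stochastic-ordering and mean-radius computation.
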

\begin{proof}
	Denote by $\sigma$ the volume measure on $\mathbb{S}^{d-1}$.
	Using e.g.\ \cite[Corollary 10.17]{lee2018introduction}, we have
	\begin{align*}
		k(r):=\nu(B_{r}(o))
		= \int_{\mathbb{S}^{d-1}} \int_{[0,r)}  s_{\mathsf{X}}(u) \ \lambda_1(\d u)\, \sigma(\d \theta) 
		= \Psi(r) \omega_{d-1}, \qquad r \in[0, c).
	\end{align*}
	
	Denote by $\mathrm{Exp}_o: \mathbb{R}^d \to \mathsf{X}$ the exponential function at $o$ 
	(where we choose an isometry between the tangent space $T_o\mathsf{X}$ to $\mathsf{X}$ at $o$ and $\mathbb{R}^d$ to identify $T_o\mathsf{X}$ with $\mathbb{R}^d$).
	For $r_1,r_2 \in[0, c)$ with $r_1 \geq r_2$ define the probability measure
	\[
	\eta(\mathsf{C}) := \frac{1}{\omega_{d-1}}\int_{\mathbb{S}^{d-1}} \int_{(0,1)}\mathbbm{1}_{\mathsf{C}}\Big(\mathrm{Exp}_o\big(\Psi^{-1}(\Psi(r_1)s)\theta\big),
	\mathrm{Exp}_o\big(\Psi^{-1}(\Psi(r_2)s)\theta\big)\Big)\ \lambda_1(\d s) \, \sigma(\d \theta)
	\]
	for $\mathsf{C} \in \mathcal{B}(\mathsf{X}^2)$.
	Then $\eta \in \mathcal{C}(1/k(r_1)\nu\vert_{B_{r_1}(o)}, 1/k(r_2)\nu\vert_{B_{r_2}(o)})$,
	because using the substitution $y = \Psi^{-1}(\Psi(r_1)s)$ together with the inverse function theorem and \cite[Corollary 10.17]{lee2018introduction}, we have for $\mathsf{A} \in \mathcal{B}(\mathsf{C})$
	\begin{align*}
		\eta(\mathsf{A} \times \mathsf{X})
		&= \frac{1}{\omega_{d-1}}\int_{\mathbb{S}^{d-1}} \int_{(0,1)}\mathbbm{1}_{\mathsf{A}}\Big(\mathrm{Exp}_o\big(\Psi^{-1}(\Psi(r_1)s)\theta\big)\Big)
		\ \lambda_1(\d s) \, \sigma(\d \theta)\\
		&= \frac{1}{k(r_1)}\int_{\mathbb{S}^{d-1}} \int_{(0,r_1)}\mathbbm{1}_{\mathsf{A}}\Big(\mathrm{Exp}_o\big(y\theta\big)\Big) s_{\mathsf{X}}(y)
		\ \lambda_1(\d y) \, \sigma(\d \theta)
		= \frac{1}{k(r_1)} \nu\big(\mathsf{A} \cap B_{r_1}(o)\big),
	\end{align*}
	and a similar argument also works for the second marginal of $\eta$.
	Observe that for all $\theta \in \mathbb{S}^{d-1}$ the  geodesic $[0, c) \to \mathsf{X}, s \mapsto \mathrm{Exp}_o(s)$ is minimizing.
	Therefore 
	\begin{align*}
		&W_{\mathrm{d}}\left(\frac{1}{k(r_1)}\nu\vert_{B_{r_1}(o)}, \frac{1}{k(r_2)}\nu\vert_{B_{r_2}(o)}\right)
		\leq  \int_{(0,1)}\big|\Psi^{-1}\big(\Psi(r_1)s\big) - \Psi^{-1}\big(\Psi(r_2)s\big)\big|\ \lambda_1(\d s) \\
		&\qquad= \frac{1}{\Psi(r_1)} \int_{(0,\Psi(r_1))}\Psi^{-1}(s) \ \lambda_1(\d s)
		- \frac{1}{\Psi(r_2)} \int_{(0,\Psi(r_2))}\Psi^{-1}(s) \ \lambda_1(\d s)\\
		&\qquad=  \frac{1}{\Psi(r_1)} \int_{(0,r_1)} y  s_{\mathsf{X}}(y) \ \lambda_1(\d y)
		- \frac{1}{\Psi(r_2)} \ \int_{(0,r_2)} y  s_{\mathsf{X}}(y) \ \lambda_1(\d y),
	\end{align*}
	where for the last equality we used the substitution $y = \Psi^{-1}(s)$ together with the inverse function theorem.
\end{proof}

\begin{lemma}\label{L: concavity considerations}
	Let $h: \mathsf{J} \to \mathsf{I}$ be a strictly monotone and $\phi: \mathsf{I} \to \mathbb{R}$ a monotone function on intervals $\mathsf{J}\subseteq[0, \infty)$ and $\mathsf{I}\subseteq \mathbb{R}$.
	If $\phi$ is concave, and either $h$ is increasing and $\phi$ is decreasing, or $h$ is decreasing and $\phi$ is increasing, then
	\[
		\left|\phi\big(h(st)\big) - \phi\big(h(sr)\big)\right| \leq \frac{h(st)- h(sr)}{h(t) - h(r)} \cdot \left|\phi\big(h(t)\big) - \phi\big(h(r)\big)\right|
	\]
	for all $s \in (0,1)$, $r,t \in \mathsf{J}$ with $r \neq t$ and  $sr, st \in \mathsf{J}$.
\end{lemma}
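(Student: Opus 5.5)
The plan is to reduce the statement to a slope comparison for the concave function $\phi$. Write $a := h(sr)$, $b := h(st)$, $A := h(r)$ and $B := h(t)$. By symmetry of both sides in $(r,t)$ we may assume $r<t$; note that the fraction $(h(st)-h(sr))/(h(t)-h(r))$ is symmetric, and it is nonnegative because $h$ is strictly monotone and $sr<st$ whenever $r<t$. In both cases the composition $\phi\circ h$ is monotone, so both absolute values carry the same sign. The inequality is therefore equivalent to
\[
	\frac{|\phi(b)-\phi(a)|}{|b-a|} \leq \frac{|\phi(B)-\phi(A)|}{|B-A|},
\]
with the convention that the claim is trivial if $a=b$, which cannot happen by strict monotonicity. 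In words, the absolute slope of the chord of $\phi$ over $[a,b]$ must be at most the absolute slope of the chord over $[A,B]$.

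Case 1: $h$ is increasing and $\phi$ is decreasing and concave. Then $a<b$, $A<B$, and since $sr<r$ and $st<t$ we get $a\le A$ and $b\le B$. Because $\phi$ is concave, the chord slope $(\phi(v)-\phi(u))/(v-u)$ is nonincreasing in each endpoint. Hence the slope over $[a,b]$ is at least the slope over $[A,B]$, and both slopes are $\le 0$. Taking absolute values reverses this, which gives exactly $|\text{slope}_{[a,b]}|\le|\text{slope}_{[A,B]}|$.

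Case 2: $h$ is decreasing and $\phi$ is increasing and concave. Now $b<a$ and $B<A$, and $sr<r$ gives $a=h(sr)\ge h(r)=A$, and likewise $b\ge B$. So the interval $[b,a]$ lies to the right of $[B,A]$ endpointwise. Chord slopes of a concave function decrease as the endpoints move right, so the slope over $[b,a]$ is at most the slope over $[B,A]$. Both slopes are nonnegative, so the inequality of absolute values follows directly.

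The only step needing care is the monotonicity of chord slopes in each endpoint. This is the standard three-slope property of concave functions on an interval, applied twice: first move the left endpoint, then the right one. One must also check the intermediate chord when the two intervals overlap or are nested; this is fine because the property holds for arbitrary $u<v$ with $u\le u'$, $v\le v'$ and $u'<v'$. I expect no real obstacle beyond this sign bookkeeping. The final step is to multiply back by $|h(st)-h(sr)|$, which equals $(h(st)-h(sr))\cdot\operatorname{sign}$; this matches the stated fraction since its numerator and denominator carry the same sign.
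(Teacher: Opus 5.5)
Your proposal is correct and follows essentially the same route as the paper. Both compare the chord slopes of the concave $\phi$ over the intervals spanned by $h(sr),h(st)$ and by $h(r),h(t)$, whose endpoints are shifted in the same direction, and then use the sign of these slopes coming from the monotonicity of $\phi$; the paper assumes $t<r$ rather than $r<t$, which is immaterial. As minor polish: for $r=0$ you only have $sr=r$, which is harmless since you only use $a\le A$. Also, in the endpoint-shifting step you can avoid the degenerate intermediate chord you worry about by passing through the chord between $a$ and $B$, which in both cases joins the leftmost and rightmost of the four points.
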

\begin{proof}
	Let $s \in (0,1)$ and $r,t \in \mathsf{J}$ with $t < r$ and $sr,st \in \mathsf{J}$.
	
	\textit{1. case: $h$ increasing, $\phi$ decreasing.}
	Since $\phi$ is concave and $h(sr) \leq h(r)$, $h(st) \leq h(t)$, we have
	\[
		\frac{\phi\big(h(st)\big)- \phi\big(h(sr)\big)}{h(st)- h(sr)} \geq \frac{\phi\big(h(t)\big)- \phi\big(h(r)\big)}{h(t)- h(r)}.
	\]
	The statement follows, because $h(st) -h(sr) < 0$ and $\phi \circ h$ is decreasing.
	
	\textit{2. case: $h$ decreasing, $\phi$ increasing.}
	Using again that $\phi$ is concave and this time $h(r) \leq h(sr)$, $h(t) \leq h(st)$, we get
	\[
		\frac{\phi\big(h(st)\big)- \phi\big(h(sr)\big)}{h(st)- h(sr)} \leq \frac{\phi\big(h(t)\big)- \phi\big(h(r)\big)}{h(t)- h(r)}.
	\]
	The statement follows, because $h(st) -h(sr) > 0$ and $\phi \circ h$ is decreasing.
\end{proof}

\begin{lemma}\label{L: bound for balls in hyperbolic space}
	Let $d \geq 2$. Setting $\Psi: [0, \infty)\to [0, \infty), x \mapsto \int_{0}^{x} \sinh^{d-1}(u)\ \d u$, we have
	\[
		\left|\frac{\int_{0}^{x} u \sinh^{d-1}(u)\ \d u}{\Psi(x)} - \frac{\int_{0}^{y} u \sinh^{d-1}(u)\ \d u}{\Psi(y)}\right|
		\leq \left|\ln\big(\Psi(x)\big) - \ln\big(\Psi(y)\big) \right|, \qquad \forall x,y \in [0, \infty).
	\]
\end{lemma}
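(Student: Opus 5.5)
Set $M(x):=\frac{\int_0^x u\sinh^{d-1}(u)\,\d u}{\Psi(x)}$, the mean radius of the uniform distribution on a hyperbolic ball of radius $x$, and $L(x):=\ln(\Psi(x))$. Both are continuously differentiable on $(0,\infty)$ and $L$ is strictly increasing. By the mean value theorem (or by writing both differences as integrals of derivatives between $y$ and $x$), it suffices to show the pointwise derivative bound
\[
	|M'(x)|\leq L'(x)=\frac{\sinh^{d-1}(x)}{\Psi(x)},\qquad x\in(0,\infty).
\]
The endpoint $x=0$, or $y=0$, is handled by the convention $M(0)=0$ and $L(0)=-\infty$. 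In that case the right-hand side is infinite, so the inequality is trivial.

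Differentiating with the quotient rule gives
\[
	M'(x)=\frac{\sinh^{d-1}(x)}{\Psi(x)}\Big(x-M(x)\Big).
\]
Since $M(x)$ is an average of $u\in(0,x)$, we have $0\le x-M(x)$, so $M'(x)\geq0$. The desired bound therefore reduces to
\[
	x-M(x)=\frac{\int_0^x (x-u)\sinh^{d-1}(u)\,\d u}{\int_0^x\sinh^{d-1}(u)\,\d u}\leq 1 .
\]

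The main step is proving this last inequality, that is, $\int_0^x (x-u-1)\sinh^{d-1}(u)\,\d u\le 0$ for all $x>0$. For $x\le1$ it is immediate, because $x-u-1\le 0$ on $(0,x)$. For $x>1$, substitute $v=x-u$. The claim becomes: under the probability density proportional to $\sinh^{d-1}(x-v)$ on $(0,x)$, the expected value of $v$ is at most $1$. The density $v\mapsto \sinh^{d-1}(x-v)$ is decreasing in $v$. Moreover, its logarithmic derivative satisfies $-(d-1)\coth(x-v)\le -(d-1)\le -1$, since $d\geq2$ and $\coth\geq 1$. Hence the ratio of the density to $e^{-v}$ is non-increasing, and the density is dominated in the likelihood-ratio order by the exponential density $e^{-v}$ truncated to $(0,x)$. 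Monotone likelihood ratio implies stochastic dominance, so the mean of $v$ is at most the mean of the truncated exponential, which is $1-\frac{x e^{-x}}{1-e^{-x}}<1$. That gives the bound.

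Combining the two steps yields $0\le M'\le L'$ on $(0,\infty)$. Integrating between $y$ and $x$ then gives $|M(x)-M(y)|\le|L(x)-L(y)|$. The only delicate point is the likelihood-ratio comparison, and it rests on the elementary bound $\coth\ge1$ together with $d-1\ge1$.
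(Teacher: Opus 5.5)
Your argument is correct. Its skeleton is the paper's: the paper shows that $M-\ln\Psi$ is decreasing by proving $g(x):=(x-1)\Psi(x)-\int_0^x u\sinh^{d-1}(u)\,\mathrm{d}u\le 0$. That is exactly your inequality $x-M(x)\le 1$, since $(M-\ln\Psi)'(x)=\sinh^{d-1}(x)\,g(x)/\Psi(x)^2$.

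The only difference is how this key inequality is proved. The paper computes $g'(x)=\Psi(x)-\sinh^{d-1}(x)$ and $g''(x)=\sinh^{d-2}(x)\big(\sinh(x)-(d-1)\cosh(x)\big)\le 0$, then integrates twice from $g(0)=g'(0)=0$. You instead compare the law of the distance $v=x-u$ to the boundary with a truncated $\mathrm{Exp}(1)$ law via monotone likelihood ratio. Both rest on the same elementary fact: $u\mapsto e^{-u}\sinh^{d-1}(u)$ is nondecreasing because $(d-1)\coth\ge 1$. The paper's $g''\le 0$ says precisely $(\sinh^{d-1})'\ge\sinh^{d-1}$, and this is also the monotonicity of your likelihood ratio $e^{v}\sinh^{d-1}(x-v)$ after substituting $u=x-v$.

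The paper's route is self-contained calculus and needs no stochastic-order facts. Yours gives a probabilistic interpretation and the slightly sharper bound $x-M(x)\le 1-xe^{-x}/(1-e^{-x})$. You also verify $M'\ge 0$ explicitly. That step is needed to pass from ``$M-\ln\Psi$ is decreasing'' to the two-sided absolute-value inequality, and the paper's ``it suffices'' uses it only tacitly.
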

\begin{proof}
	It suffices to show that the function
	\[
		h: [0, \infty) \to \mathbb{R}, \qquad x \mapsto\frac{\int_{0}^{x} u \sinh^{d-1}(u)\ \d u}{\Psi(x)}  - \ln\big(\Psi(x)\big)
	\]
	is decreasing.
	To this end, consider the function $g: [0, \infty) \to \mathbb{R}, x \mapsto (x-1)\Psi(x) - \int_{0}^{x} u \sinh^{d-1}(u)\ \d u$.
	It has derivatives $g'(x) = \Psi(x) -  \sinh^{d-1}(x)$ and 
	\begin{align*}
		g''(x) &= \sinh^{d-2}(x) \big(\sinh(x) - (d-1) \cosh(x)\big) \leq \sinh^{d-2}(x) \big(\sinh(x)-\cosh(x)\big) \\
			&= -\sinh^{d-2}(x)e^{-x} \leq 0, \qquad\qquad  \forall x \in [0, \infty).
	\end{align*}
	Therefore $g'$ is decreasing, and $g'(0) = 0$ implies $g' \leq 0$.
	Hence also $g$ is decreasing. Together with $g(0) = 0$, we get $g \leq 0$.
	Since the derivative of $h$ is given by $h'(x) = \sin^{d-1}(x)g(x)/\Psi(x)^2 $, $x \in [0, \infty)$, this implies $h' \leq 0$.
\end{proof}

\begin{lemma}\label{L: bounds for balls on the sphere}
	Let $d \geq 2$, and set $\Psi: [0, \pi]\to [0, \infty), x \mapsto \int_{0}^{x} \sin^{d-1}(u)\ \d u$.
	Then
	\[
		\left|\frac{\int_0^{x} u \sin^{d-1}(u)\ \d u}{\Psi(x)} - \frac{\int_0^{y} u \sin^{d-1}(u)\ \d u}{\Psi(y)}\right|
		\leq \frac{d\pi\sqrt{2}}{2^{3/(2d)}} \cdot \left|\Psi(x)^{1/d} - \Psi(y)^{1/d}\right|, \qquad \forall x,y \in [0,\pi].
	\]
	Moreover, $\phi:[0, \pi] \to [0, \infty), x \mapsto \Psi(x)^{1/d}$ is $d^{-1/d}$-Lipschitz continuous and concave.
\end{lemma}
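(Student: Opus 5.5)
The plan is to reduce both claims to a one-variable derivative comparison on $[0,\pi]$.

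Set $A(x):=\int_0^x u\sin^{d-1}(u)\,\d u/\Psi(x)$ and $\phi(x)=\Psi(x)^{1/d}$.

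\emph{Step 1 (derivative of $A$).} Differentiating and integrating by parts, using $x\Psi(x)-\int_0^x u\Psi'(u)\,\d u=\int_0^x\Psi(u)\,\d u$, gives
\[
A'(x)=\frac{\sin^{d-1}(x)\int_0^x\Psi(u)\,\d u}{\Psi(x)^2}\geq 0 .
\]
Together with $\phi'(x)=\tfrac1d\Psi(x)^{1/d-1}\sin^{d-1}(x)\ge 0$ this yields
\[
\frac{A'(x)}{\phi'(x)}=\frac{d\int_0^x\Psi(u)\,\d u}{\Psi(x)^{1+1/d}}\le \frac{d\,x}{\Psi(x)^{1/d}},
\]
since $\Psi$ is increasing.

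\emph{Step 2 (bounding $x/\Psi(x)^{1/d}$).} For $x\le\pi/2$ I will use Jordan's inequality $\sin u\ge 2u/\pi$. This gives $\Psi(x)\ge (2/\pi)^{d-1}x^d/d$, hence $x\le (\pi/2)^{(d-1)/d}d^{1/d}\Psi(x)^{1/d}$. For $x\in(\pi/2,\pi]$ I will use $x\le\pi$ together with $\Psi(x)\ge\Psi(\pi/2)$, and bound $\Psi(\pi/2)$ from below in the same way. Combining the two ranges gives a uniform constant $c$ with $A'\le c\,\phi'$ on $(0,\pi)$. Since $A$ and $\phi$ are both increasing, integrating between $y$ and $x$ gives $|A(x)-A(y)|\le c\,|\phi(x)-\phi(y)|$.

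\emph{Main obstacle.} I expect the hardest part to be tracking the constants so that $c$ comes out as $d\pi\sqrt2/2^{3/(2d)}$. The structure of the argument does not depend on this. I will first try to balance the two ranges at $x=\pi/2$. If that is not sharp enough, I will refine the estimate using $\int_0^x\Psi\le x\Psi(x)$ only where it is needed.

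\emph{Step 3 (concavity of $\phi$).} It suffices to show that $\phi'=\tfrac1d\big(\sin^{d}(x)/\Psi(x)\big)^{(d-1)/d}$ is decreasing, i.e.\ that $\sin^d/\Psi$ is decreasing. Its logarithmic derivative is nonpositive iff $d\cos(x)\Psi(x)\le\sin^d(x)$. For $x\ge\pi/2$ this is trivial, because the left-hand side is $\le 0$. For $x<\pi/2$ put $k:=\sin^d-d\cos\Psi$. Then $k(0)=0$ and $k'=d\sin(x)\Psi(x)\ge0$, so $k\ge 0$.

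\emph{Step 4 (Lipschitz constant).} Since $\phi'$ is decreasing, $\sup\phi'=\lim_{x\to0}\phi'(x)$. Using $\Psi(x)\sim x^d/d$ as $x\to0$, this limit equals $\tfrac1d\,d^{(d-1)/d}=d^{-1/d}$. Hence $\phi$ is $d^{-1/d}$-Lipschitz.
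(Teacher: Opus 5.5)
\textbf{Gap in Step 2: the split at $\pi/2$ cannot give the stated constant.}

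Your Steps 1, 3 and 4 are correct and coincide with the paper's argument. The paper works with $h=A\circ\phi^{-1}$, so your ratio $A'/\phi'$ is exactly its $h'$. Your bound $A'/\phi'\le dx/\Psi(x)^{1/d}$ is the paper's bound $h'\le d\Psi^{-1}(x^d)/x$ in the other variable. Your $k$ is $-d$ times the paper's $\tilde f$.

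The gap is Step 2, and it is more than bookkeeping. On $(\pi/2,\pi]$ you bound $dx/\Psi(x)^{1/d}$ by $d\pi/\Psi(\pi/2)^{1/d}=2^{1/d}\,d\pi/\Psi(\pi)^{1/d}$, which loses a factor $2^{1/d}$.
\begin{itemize}
\item For $d=2$, even the exact value $\Psi(\pi/2)=1$ gives $2\pi$. The claimed constant is $2\pi\sqrt2/2^{3/4}=2^{3/4}\pi<2\pi$.
\item With Jordan's bound $\Psi(\pi/2)\ge \pi/(2d)$ you get $c=d\pi(2d/\pi)^{1/d}$. This exceeds $d\pi\sqrt2/2^{3/(2d)}$ for every $2\le d\le 7$, because $2d/\pi>2^{(d-3)/2}$ there.
\end{itemize}
Balancing the two ranges at $\pi/2$ does not help, since the loss sits entirely in the second range. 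Your fallback of keeping $\int_0^x\Psi$ on part of the range might be made to work, but you give no argument for it.

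\textbf{Missing idea.} The map $z\mapsto z^d/\Psi(z)$ is nondecreasing on $(0,\pi]$. Then $\sup_{(0,\pi]} dz/\Psi(z)^{1/d}=d\pi/\Psi(\pi)^{1/d}$ directly, with no split. This reduces to $d\Psi(z)\ge z\sin^{d-1}(z)$. That inequality holds because $z\mapsto d\Psi(z)-z\sin^{d-1}(z)$ vanishes at $0$ and has derivative $(d-1)\sin^{d-2}(z)(\sin z-z\cos z)\ge 0$. Here $\sin z - z\cos z\ge0$ on $[0,\pi]$, since its derivative is $z\sin z\ge 0$.

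It then remains to show $\Psi(\pi)\ge 2^{(3-d)/2}$, which gives exactly $d\pi/\Psi(\pi)^{1/d}\le d\pi\sqrt2/2^{3/(2d)}$. The paper does this via $\Psi(\pi)=2\int_0^{\pi/2}\sin^{d-1}\ge 2(d-2)!!/(d-1)!!$. Your Jordan estimate also suffices at this point: $\Psi(\pi)=2\Psi(\pi/2)\ge \pi/d\ge 2^{(3-d)/2}$ for all $d\ge 2$. Jordan was never the problem; the lossy comparison $\Psi(x)\ge\Psi(\pi/2)$ was.
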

\begin{proof}
	For the first claim, it suffices to show that the function 
	\[
		h: \left[0, \sqrt[d]{\Psi(\pi)}\right] \to [0, \infty), \qquad x \mapsto \frac{\int_0^{\Psi^{-1}(x^d)} u \sin^{d-1}(u)\ \d u}{x^d}
	\]
	is $d\pi/\Psi(\pi)^{1/d}$-Lipschitz continuous,
	since
	\[
		\Psi(\pi) = \int_{0}^{\pi/2} \sin^{d-1}(x)\ \d x + \int_{0}^{\pi/2} \cos^{d-1}(x)\ \d x \geq 2 \cdot\frac{(d-2)!!}{(d-1)!!}
		\geq 2^{(3-d)/2},
	\]
	see \cite{meserve1948double}, where $n!!$ for $n \in \mathbb{N}$ denotes the double factorial.
	We have $h'(x) = d\Psi^{-1}(x^d)/x - d h(x)/x$. 
	In particular $h' \geq 0$  and
	\begin{align*}
		\max_{x \in \left [0 , \sqrt[d]{\Psi(\pi)}\right ]} h'(x) 
		\leq \max_{x \in \left [0, \sqrt[d]{\Psi(\pi)}\right ]} d\cdot\frac{\Psi^{-1}(x^d)}{x}
		= \max_{z \in [0,\pi]} d\cdot \left(\frac{z^d}{\Psi(z)}\right)^{1/d}
		\leq \frac{d\pi}{\Psi(\pi)^{1/d}},
	\end{align*}
	since the function $q:[0,\pi] \to [0, \infty), z \mapsto z^d/\Psi(z)$ is increasing:
	Note that $\tilde{q}: [0, \pi] \to [0, \infty), z \mapsto d \Psi(z) - z\sin^{d-1}(z)$ satisfies $\tilde{q}\geq 0$,
	because $\tilde{q}'(z) = (d-1)\sin^{d-2}(z)(\sin(z) - z \cos(z)) \geq 0$ and $\tilde{q}(0) = 0$.
	Therefore $q'(z) =  \tilde{q}(z)  z^{d-1}/\Psi(z)^2\geq 0$ for all $z \in [0, \pi]$.
	
	For the second claim, note that we have, 
	\[
		\phi''(x) = \frac{(d-1) \sin^{d-2}(x)}{d\,\Psi(x)^{2-1/d}} \cdot \left (\cos(x) \Psi(x) - \frac{1}{d}\sin^d(x)\right ) \leq 0, \qquad x \in [0,\pi],
	\]
	since $\tilde{f}: [0,\pi], x \mapsto \cos(x) \Psi(x) - \frac{1}{d}\sin^d(x)$ satisfies $\tilde{f}'(x) = -\sin(x) \Psi(x) \leq 0$ for $x \in [0,\pi]$ and $\tilde{f}(0) =0$.
	This implies that $\phi$ is concave.
	Moreover, using l'Hosptial
	\begin{align*}
		|\phi(x)'| &= \frac{1}{d} \cdot \frac{\sin^{d-1}(x)}{\Psi(x)^{(d-1)/d}}
		 \leq \frac{1}{d} \lim_{x \to 0} \frac{\sin^{d-1}(x)}{\Psi(x)^{(d-1)/d}}
		  =\frac{1}{d} \left(\lim_{x \to 0} \frac{\sin^d(x)}{\Psi(x)}\right)^{(d-1)/d}\\
		  &=\frac{1}{d} \left(\lim_{x \to 0} \frac{d\sin^{d-1}(x)\cos(x)}{\sin^{d-1}(x)}\right)^{(d-1)/d}
		  = d^{-1/d}.
	\end{align*}
\end{proof}

\section*{Acknowledgements}
The author would like to thank Daniel Rudolf for valuable feedback.
Moreover, the author gratefully acknowledges support from the Deutsche Forschungsgemeinschaft (DFG, German Research Foundation) – project number 522337282.

 \bibliography{sources.bib}  
\end{document}